\numberwithin{equation}{section}
\theoremstyle{definition}
\newtheorem{thm}{Theorem}[section]
\newtheorem{lem}[thm]{Lemma}
\newtheorem{cor}[thm]{Corollary}
\newtheorem{prop}[thm]{Proposition}
\theoremstyle{definition}
\newtheorem{rem}[thm]{Remark}
\newtheorem{defn}[thm]{Definition}
\newtheorem{ex}[thm]{Example}
\def\R{{\mathbb R}}
\def\Z{{\mathbb Z}}
\def\C{{\mathbb C}}
\def\O{{\mathscr O}}
\def\P{{\mathbb P}}
\DeclareMathOperator{\Homsheaf}{\mathscr{H}\!\textit{om}}
\DeclareMathOperator{\RHomsheaf}{R\mathscr{H}\!\textit{om}}
\DeclareMathOperator{\Extsheaf}{\mathscr{E}\!\textit{xt}}
\def\disc{\mathop{\mathrm{disc}}\nolimits}
\def\Hom{\mathop{\mathrm{Hom}}\nolimits}
\def\Ker{\mathop{\mathrm{Ker}}\nolimits}
\def\Norm{\mathop{\mathrm{N}}\nolimits}
\def\length{\mathop{\mathrm{length}}\nolimits}
\def\GL{\mathop{\mathrm{GL}}\nolimits}
\def\SL{\mathop{\mathrm{SL}}\nolimits}
\def\Res{\mathop{\mathrm{Res}}\nolimits}
\def\Sym{\mathop{\mathrm{Sym}}\nolimits}
\def\Proj{\mathop{\rm Proj}}
\def\Spec{\mathop{\rm Spec}}
\def\Supp{\mathop{\rm Supp}}
\def\rank{\mathop{\text{\rm rank}}\nolimits}
\def\chara{\mathop{\mathrm{char}}}
\newcommand{\transp}[1]{{}^{t}\!{#1}}
\begin{document}
\title[Sheaf theoretic classifications of pairs of square matrices]
{Sheaf theoretic  classifications of pairs of square matrices over arbitrary fields}
\author{Yasuhiro Ishitsuka}
\address{Department of Mathematics, Faculty of Science, Kyoto University, Kyoto 606-8502, Japan}
\email{yasu-ishi@math.kyoto-u.ac.jp}
\author{Tetsushi Ito}
\address{Department of Mathematics, Faculty of Science, Kyoto University, Kyoto 606-8502, Japan}
\email{tetsushi@math.kyoto-u.ac.jp}

\date{\today}
\subjclass[2010]{Primary 14C21; Secondary 14K15, 14M12, 15A60, 15A63}

\keywords{Pencils of quadrics, Pairs of bilinear forms, Normal forms of matrices}

\maketitle

\begin{abstract}
We give classifications of linear orbits of pairs of square matrices
with non-vanishing discriminant polynomials over a field
in terms of certain coherent sheaves with additional data on closed subschemes of the projective line.
Our results are valid in a uniform manner over arbitrary fields including
those of characteristic two.
This work is based on the previous work of the first author on
theta characteristics on hypersurfaces.
As an application,
we give parametrizations of orbits of pairs of symmetric matrices
under special linear groups with fixed discriminant polynomials
generalizing some results of Wood and Bhargava-Gross-Wang.
\end{abstract}


\section{Introduction}
\label{Section:Introduction}

Classifying pencils of quadrics (or pairs of symmetric matrices)
is a classical topic in algebraic geometry and invariant theory,
which goes back at least to the work of Sylvester and Weierstrass
(\cite[Chapter XIII]{HodgePedoe}, \cite[Chapter 8, Historical Notes]{Dolgachev}).
There are vast amounts of literatures on the classification of pencils of quadrics
especially over $\R$ or $\C$ (e.g.\ See \cite{Thompson}, \cite{LeepSchueller} and references therein).
These days, thanks to the recent developments of Arithmetic Invariant Theory,
we become more interested in the classification of
pencils of quadrics over fields which are not necessarily algebraically closed
(such as global, local or finite fields)
because of its relation to the arithmetic of hyperelliptic curves
(\cite{IshitsukaGalois}, \cite{Wang}, \cite{BhargavaGrossWang1}, \cite{BhargavaGrossWang2}).

In this paper,
we give classifications of linear orbits of pairs of square matrices over a field
in terms of certain coherent sheaves with additional data on closed subschemes of
the projective line $\P^1$.
Our results are valid in a uniform manner over arbitrary fields including
those of characteristic two.
The methods used in this paper are similar to those in \cite{BeauvilleDeterminantal}, \cite{Ishitsuka}.
In \cite{Ishitsuka},
the first author obtained similar classifications
of linear orbits of $m$-tuples of symmetric matrices over arbitrary fields for any $m \geq 3$.
Coherent sheaves appearing in this paper are reminiscent of
{\it theta characteristics on hypersurfaces}.

Let us give the statement of our results on
the classification of linear orbits of pairs of symmetric matrices.
Geometrically, this case seems of fundamental interest because of its relation
to pencils of quadrics (see Section \ref{Section:SegreClassification}).
Let $k$ be a field, and $n \geq 1$ a positive integer.
The set of pairs of symmetric matrices of size $n+1$ with entries in $k$
is denoted by
\[ k^2 \otimes \Sym_2 k^{n+1} :=
   \big\{ M = (M_0,M_1) \ \big| \  M_i \in \mathrm{Mat}_{n+1}(k),\ \transp{M_i} = M_i \ (i=0,1) \big\}. \]
For a pair
$M = (M_0,M_1) \ \in \ k^2 \otimes \Sym_2 k^{n+1}$,
the {\it discriminant polynomial} is defined by
\[ \disc(M) := \det(X_0 M_0 + X_1 M_1). \]
Assume that $\disc(M) \neq 0$.
Then, it is a homogeneous polynomial of degree $n+1$ in two variables $X_0,X_1$.
The closed subscheme
\[ D_M := (\disc(M) = 0) \subset \P^1 \]
is called the {\it discriminant subscheme},
which is a zero-dimensional scheme over $k$ with $\dim_k H^0(D_M,\O_{D_M}) = n+1$.
There is a right action of
$P \in \GL_{n+1}(k)$ on $k^2 \otimes \Sym_2 k^{n+1}$ by
\[ M \cdot P := (\, \transp{P} M_0 P,\, \transp{P} M_1 P\,). \]
Since $\disc(M \cdot P) = (\det P)^2 \disc(M)$,
the action of $\GL_{n+1}(k)$ (resp.\ $\SL_{n+1}(k)$)
preserves the discriminant subschemes
(resp.\ discriminant polynomials).
Fix a closed subscheme $\iota \colon S \hookrightarrow \P^1$,
which is finite over $k$ and satisfies $\dim_k H^0(S,\O_S) = n+1$.
Our aim is to classify the set of
$\GL_{n+1}(k)$-orbits in $k^2 \otimes \Sym_2 k^{n+1}$
whose discriminant subschemes are $S$.
Let
\[ (k^2 \otimes \Sym_2 k^{n+1})_S \subset k^2 \otimes \Sym_2 k^{n+1} \]
be the subset consisting of pairs whose discriminant subschemes are $S$.
By Grothendieck duality (\cite{Hartshorne:ResiduesDuality}),
we have a coherent $\O_S$-module $(\iota^{!}\O_{\P^1}(-1))[1]$.
In fact, $(\iota^{!}\O_{\P^1}(-1))[1]$ is defined as a complex of $\O_S$-modules
in the derived category.
It is actually a free $\O_S$-module of rank one
concentrated in degree zero (see Lemma \ref{Lemma:Duality:Sheaf}).
We {\it fix} an isomorphism
\[
c \colon (\iota^{!}\O_{\P^1}(-1))[1] \overset{\sim}{\longrightarrow} \O_S
\]
of $\O_S$-modules.
(A canonical choice of $c$ does not seem to exist.
In general, bijections between linear orbits constructed in this paper depend on the choice of $c$
unless $k$ is algebraically closed.
See Remark \ref{Remark:Proposition:Key}.)
For a coherent $\O_S$-module $\mathcal{F}$,
its $\O_S$-linear dual
\[ \mathcal{F}^{\vee} := \Homsheaf_{S}\big( \mathcal{F},\,\O_S \big) \]
is isomorphic to $\mathcal{F}$,
but there is no canonical isomorphism between them
(see Lemma \ref{Lemma:Duality:SelfDual}).

The following is one of the main results of this paper.

\begin{thm}
\label{Theorem:MainTheorem}
There is a bijection between the following sets.
\begin{itemize}
\item The set of $\GL_{n+1}(k)$-orbits in $(k^2 \otimes \Sym_2 k^{n+1})_S$.
\item The set of equivalence classes of pairs $(\mathcal{F},\lambda)$, where
\begin{itemize}
\item $\mathcal{F}$ is a coherent $\O_S$-module with
$\length_{\O_{S,x}} \O_{S,x} = \length_{\O_{S,x}} {\mathcal{F}}_{x}$
for each $x \in S$, and
\item $\lambda \colon \mathcal{F} \overset{\sim}{\longrightarrow} \mathcal{F}^{\vee}$
is a symmetric isomorphism of $\O_{S}$-modules.
(For the definition of symmetric morphisms, see Definition \ref{Definition:TransposeSymmetric}.)
\end{itemize}
Here two pairs $(\mathcal{F},\lambda),(\mathcal{F}',\lambda')$
are equivalent if there is an isomorphism
$\rho \colon \mathcal{F} \overset{\sim}{\longrightarrow} \mathcal{F}'$
of $\O_{S}$-modules with $\lambda = (\transp{\rho}) \circ \lambda' \circ \rho$.
(For the definition of the transpose morphism $\transp{\rho}$,
see Definition \ref{Definition:TransposeSymmetric}.)
\end{itemize}
\end{thm}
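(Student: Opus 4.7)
The plan is to construct mutually inverse maps between the two sets by using the classical ``cokernel of the universal matrix'' construction, enhanced with Grothendieck duality to encode the symmetry.

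For the forward direction, given $M = (M_0,M_1) \in (k^2 \otimes \Sym_2 k^{n+1})_S$ I would form the $\O_{\P^1}$-linear morphism
\[
\varphi_M := X_0 M_0 + X_1 M_1 \colon \O_{\P^1}(-1)^{n+1} \longrightarrow \O_{\P^1}^{n+1}.
\]
Its determinant is $\disc(M)$, which cuts out $S$ and is nonzero, so $\varphi_M$ is injective and its cokernel is supported on $S$, say $\iota_*\mathcal{F}$ for a coherent $\O_S$-module $\mathcal{F}$. A stalk-wise Fitting-ideal computation at each $x \in S$ gives $\length_{\O_{S,x}} \mathcal{F}_x = \length_{\O_{S,x}} \O_{S,x}$. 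To extract $\lambda$, I apply $\RHomsheaf_{\P^1}(-,\O_{\P^1}(-1))$ to the resolution. Because $\O_{\P^1}(-1)^{n+1}$ and $\O_{\P^1}^{n+1}$ are locally free, the cohomology of the dual complex is concentrated in degree one on $\iota_*\mathcal{F}$; by Grothendieck duality along the finite morphism $\iota$, together with the fixed trivialization $c$, this degree-one sheaf is canonically $\iota_*\mathcal{F}^{\vee}$. The connecting map of the dual sequence is $\transp{\varphi_M}$, which agrees with $\varphi_M$ under the symmetry of $M_i$, and the induced isomorphism $\lambda\colon \mathcal{F} \xrightarrow{\sim} \mathcal{F}^{\vee}$ is symmetric because iterating the duality returns the original sequence.

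For the inverse direction, given $(\mathcal{F},\lambda)$ I would regard $\iota_*\mathcal{F}$ as a torsion coherent sheaf on $\P^1$ of total length $n+1$. A choice of $k$-basis of $H^0(\P^1,\iota_*\mathcal{F}) = H^0(S,\mathcal{F})$ yields a surjection $\O_{\P^1}^{n+1} \twoheadrightarrow \iota_*\mathcal{F}$, whose kernel $\mathcal{K}$ is locally free of rank $n+1$ and hence splits as $\bigoplus_i \O_{\P^1}(a_i)$. The minimality of the generating set (forcing $H^0(\P^1,\mathcal{K})=0$, so $a_i \le -1$) together with $\deg \mathcal{K} = -(n+1)$ pins down $a_i=-1$ for all $i$. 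The kernel inclusion is then encoded by a matrix $X_0 M_0 + X_1 M_1$, and the symmetric datum $\lambda$ is used to choose the generators so that $M_0,M_1$ become symmetric: concretely, $\lambda$ supplies a compatibility between the chosen resolution and its dualized resolution, and descending this compatibility to the generating modules is exactly a symmetrization of the matrix representation.

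For well-definedness and the inverse property, any two minimal resolutions of $\iota_*\mathcal{F}$ differ by a $P \in \GL_{n+1}(k)$ acting on the right (and $\transp{P}$ on the left under the self-dualization), which is precisely the $\GL_{n+1}(k)$-orbit equivalence $M \mapsto M\cdot P = (\transp{P}M_0 P, \transp{P}M_1 P)$; equivalences $\rho\colon(\mathcal{F},\lambda) \to (\mathcal{F}',\lambda')$ with $\lambda = \transp{\rho}\circ\lambda'\circ\rho$ translate into the same relation on matrices, and the two constructions are verified to be mutually inverse by tracing the resolutions. The main obstacle is the compatibility step in the inverse direction: one must show that the abstract symmetric isomorphism $\lambda$, which is defined only on $\mathcal{F}$, actually lifts to a self-dual (symmetric) resolution $\varphi$ rather than merely a resolution whose transpose is equivalent to it. This hinges on a careful bookkeeping of how Grothendieck duality and the trivialization $c$ transport symmetric forms across the equivalence $\iota_*\mathcal{F}^{\vee} \cong \Extsheaf^1_{\P^1}(\iota_*\mathcal{F},\O_{\P^1}(-1))$, and it is precisely this step that causes the bijection to depend on the non-canonical choice of $c$.
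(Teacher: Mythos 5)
Your proposal follows the same broad strategy as the paper: form the cokernel $\iota_*\mathcal{F}$ of $X_0M_0+X_1M_1$, apply $\Homsheaf_{\P^1}(-,\O_{\P^1}(-1))$ together with Grothendieck duality and the fixed trivialization $c$ to extract a symmetric $\lambda$, and reverse the construction by resolving $\iota_*\mathcal{F}$. Your degree bookkeeping for $\Ker p\cong\O_{\P^1}(-1)^{\oplus(n+1)}$ (each $a_i\le -1$ from $H^0(\P^1,\Ker p)=0$, plus $\sum a_i=-(n+1)$) is a valid alternative to Lemma~\ref{Lemma:Resolution}, which instead uses $H^0=H^1=0$.

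The genuine gap is exactly the step you flag but leave unresolved: showing that the abstract symmetric $\lambda\colon\mathcal{F}\to\mathcal{F}^\vee$ forces the matrix $M$ encoding $\Ker p\hookrightarrow\O_{\P^1}^{\oplus(n+1)}$ to be literally symmetric, not just equivalent to its transpose. Without a mechanism to pin down the isomorphism between the resolution and its dual, there is no canonical matrix and hence nothing for the ``careful bookkeeping'' to act on. The paper supplies that mechanism by first proving the rigidified bijection of Proposition~\ref{Proposition:Key}, where the datum is a triple $(\mathcal{F},\lambda,s)$ with an ordered $k$-basis $s$ of $H^0(S,\mathcal{F})$. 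The basis fixes the surjection $p\colon\O_{\P^1}^{\oplus(n+1)}\twoheadrightarrow\iota_*\mathcal{F}$; one then lifts $\iota_*(\lambda)\circ p$ through the dualized resolution, and this lift is \emph{unique} because $\Hom_{\P^1}\bigl(\O_{\P^1}^{\oplus(n+1)},\O_{\P^1}(-1)^{\oplus(n+1)}\bigr)=0$. The unique resulting isomorphism $q\colon\Ker p\to\O_{\P^1}(-1)^{\oplus(n+1)}$ is what turns the inclusion into an honest matrix, and the symmetry of $\lambda$ propagates through $q$ to give $M=\transp{M}$. Theorem~\ref{Theorem:MainTheorem} is then deduced from Proposition~\ref{Proposition:Key} by forgetting $s$, which is precisely the $\GL_{n+1}(k)$-action (Proposition~\ref{Proposition:Key}~(2)). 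You should make this rigidification-plus-uniqueness step explicit; as written, the inverse map is not actually well defined on matrices.
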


For the classification of $\SL_{n+1}(k)$-orbits,
see Theorem \ref{Theorem:MainTheoremSL(n+1)}.
We also prove classification results for 
$\GL_{n+1}(k)$-orbits and $\SL_{n+1}(k)$-orbits for
pairs of square matrices which are not necessarily symmetric.
See Theorem \ref{Theorem:MainTheorem:NonSymmetric}
and Theorem \ref{Theorem:MainTheoremSL(n+1):NonSymmetric}.

Theorem \ref{Theorem:MainTheorem} may look slightly
different from other results in the literature because we use
the language of modern algebraic geometry.
Among a huge number of results in the literature,
the results in \cite[\S 3]{Milnor}, \cite[\S 4]{Waterhouse1}, \cite[\S 2]{Waterhouse2} seem closest to
Theorem \ref{Theorem:MainTheorem}.
(See also \cite{Williamson1}, \cite{Williamson2}, \cite{Venkatachaliengar}.)
Results like
Theorem \ref{Theorem:MainTheoremSL(n+1)},
Theorem \ref{Theorem:MainTheorem:NonSymmetric},
Theorem \ref{Theorem:MainTheoremSL(n+1):NonSymmetric}
were not considered in these papers.
Of course, when $k$ is algebraically closed of characteristic different from two,
the arguments in this paper are not very different from those in the literature.
One of the features of our methods is that
we systematically use the language and the results of modern algebraic geometry such
as Grothendieck duality and resolutions of coherent $\O_{\P^1}$-modules.
It makes, we believe, the statements and the proofs in this paper more streamlined.

The outline of this paper is as follows.
In Section \ref{Section:GrothendieckDualityResolution},
we recall basic results on Grothendieck duality and resolutions of coherent $\O_{\P^1}$-modules.
In Section \ref{Section:KeyBijection}, following the strategy in \cite{Ishitsuka},
we first prove a rigidified bijection (Proposition \ref{Proposition:Key}).
Then, it is straightforward to deduce Theorem \ref{Theorem:MainTheorem} from it.
We need to modify some of the arguments in \cite{Ishitsuka} because some of the key results
such as \cite[Proposition 2.6]{Ishitsuka} (or \cite[Proposition 1.11]{BeauvilleDeterminantal})
are not true over $\P^1$.
In Section \ref{Section:WoodBhargavaGrossWang},
we give parametrizations of $\SL_{n+1}(k)$-orbits of pairs of symmetric matrices
with fixed discriminant polynomials generalizing some results of Wood and Bhargava-Gross-Wang
(\cite{Wood}, \cite{BhargavaGrossWang1}, \cite{BhargavaGrossWang2}).
In fact, generalizing their results using the methods of modern algebraic geometry
was one of the initial motivation of this work.
In Section \ref{Section:SegreClassification},
we explain how to recover the classical classification of pencils of quadrics
in terms of Segre symbols from our results.
In Section \ref{Section:ClassificationPairs:NonSymmetric},
we give a classification of pairs of matrices which are not necessarily symmetric.
Finally, in Appendix \ref{Appendix:SymmetricDeterminant},
we give calculations of the determinants of certain symmetric matrices.

\begin{rem}
Similar classification results for pairs of square matrices over commutative rings are
obtained by Wood (\cite{Wood}).
{\it Balanced pairs of modules} in \cite[Section 6]{Wood}
are closely related to pairs $(\mathcal{F},\lambda)$ in Theorem \ref{Theorem:MainTheorem}
(or $(\mathcal{F},\psi)$ in Theorem \ref{Theorem:MainTheorem:NonSymmetric}).
It is an interesting problem to study the relation between our results and the results of Wood.
The statements of the results are similar,
but the proofs are different.
Note that, in this paper, we make a non-canonical choice of $c$,
whereas such a choice does not seem to appear in \cite{Wood}.
It seems that the choice of $c$ is somehow incorporated in Wood's construction in a subtle way.
\end{rem}

\begin{rem}
It is an interesting problem to generalize our methods
to classify pairs of square matrices with {\it vanishing} discriminant polynomials.
To this direction, classical results due to Kronecker and Dickson are
well-known (\cite[\S 3]{Waterhouse1}).
Another interesting problem is to generalize our methods
to classify pencils of quadrics in characteristic two (\cite{Bhosle}).
\end{rem}

\subsection*{Acknowledgements}

The work of the first author was supported by JSPS KAKENHI Grant Number 13J01450.
The work of the second author 
was supported by JSPS KAKENHI Grant Number 20674001 and 26800013.

\section{Grothendieck duality and resolutions of coherent $\O_{\P^1}$-modules}
\label{Section:GrothendieckDualityResolution}

We fix a field $k$, and a positive integer $n \geq 1$.
The base field $k$ is arbitrary unless otherwise mentioned.
We also fix a closed subscheme $\iota \colon S \hookrightarrow \P^1$,
which is finite over $k$ and satisfies $\dim_k H^0(S,\O_S) = n+1$.

We recall some results on Grothendieck duality for the closed immersion $\iota$
(\cite{Hartshorne:ResiduesDuality}).
There is a functor $\iota^{!}$ from the bounded derived category of coherent $\O_{\P^1}$-modules
to the bounded derived category of coherent $\O_S$-modules.
For a bounded complex $\mathcal{F}$ (resp.\ $\mathcal{G}$) of coherent
$\O_S$-modules (resp.\ $\O_{\P^1}$-modules),
there is a canonical quasi-isomorphism
\[ \iota_{\ast} \RHomsheaf_{S}(\mathcal{F},\,\iota^{!} \mathcal{G})
   \cong \RHomsheaf_{\P^1}(\iota_{\ast} \mathcal{F},\,\mathcal{G}). \]

\begin{lem}
\label{Lemma:Duality:QuasiIsom}
Let $\mathcal{F}$ be a coherent $\O_S$-module,
and $\mathcal{F}'$ a free $\O_S$-module of rank one.
Then we have $\Extsheaf^q_{S}( \mathcal{F},\, \mathcal{F}') = 0$ for any $q \geq 1$,
and there is a canonical quasi-isomorphism
\[ \Homsheaf_{S}( \mathcal{F},\, \mathcal{F}') \cong \RHomsheaf_{S}( \mathcal{F},\, \mathcal{F}'). \]
\end{lem}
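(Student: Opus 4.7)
The plan is to show that $\mathcal{F}'$ is injective as an $\O_S$-module; the lemma then follows immediately, since one can use the identity map on $\mathcal{F}'$ as an injective resolution of $\mathcal{F}'$.

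First I would reduce to a local, ring-theoretic statement. Since $S$ is finite over $k$, it is affine and zero-dimensional, so it decomposes as a finite disjoint union of $\Spec \O_{S,x}$ over the (closed) points $x \in S$. The functor $\Extsheaf^q_S(-,\mathcal{F}')$ commutes with localization, and because $\mathcal{F}'$ is free of rank one we have $\mathcal{F}'_x \cong \O_{S,x}$. Thus it suffices to show that $\O_{S,x}$ is an injective module over itself, equivalently that $\Ext^q_{\O_{S,x}}(M,\O_{S,x}) = 0$ for every finitely generated $\O_{S,x}$-module $M$ and every $q \geq 1$.

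For this I would exploit the embedding $\iota$ into the smooth curve $\P^1$. At the image point $\iota(x)$, the local ring $\O_{\P^1,\iota(x)}$ is a discrete valuation ring; let $\pi$ be a uniformizer. The ideal cutting out $S$ inside this DVR is nonzero and hence of the form $(\pi^e)$ for some $e \geq 1$, which gives an isomorphism $\O_{S,x} \cong \O_{\P^1,\iota(x)}/(\pi^e)$. Such a local Artinian principal ideal ring is a zero-dimensional complete intersection in a regular local ring, hence Gorenstein, hence self-injective; concretely, its socle is the one-dimensional ideal generated by the image of $\pi^{e-1}$, which forces $\O_{S,x}$ to be its own injective hull.

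Granting this, the rest is formal. Injectivity of $\mathcal{F}'$ gives the vanishing $\Extsheaf^q_S(\mathcal{F},\mathcal{F}') = 0$ for all $q \geq 1$, and the identity map realizes $\mathcal{F}'$ as its own injective resolution, so $\RHomsheaf_S(\mathcal{F},\mathcal{F}')$ is canonically quasi-isomorphic to $\Homsheaf_S(\mathcal{F},\mathcal{F}')$ concentrated in degree zero. The one nontrivial input is the self-injectivity of the zero-dimensional local Gorenstein ring $\O_{S,x}$; this is the step I would expect to be the main obstacle, though it is a well-known fact about local Artinian principal ideal rings. Everything else amounts to bookkeeping.
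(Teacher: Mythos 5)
Your proof is correct and follows essentially the same route as the paper: both arguments reduce to the observation that $S$ is a complete intersection in the smooth curve $\P^1$, hence each $\O_{S,x}$ is a zero-dimensional Gorenstein local ring and therefore self-injective, which makes the free rank-one sheaf $\mathcal{F}'$ injective. The only difference is cosmetic — the paper cites \cite[Proposition 21.5]{Eisenbud} at the Gorenstein $\Rightarrow$ injective step, whereas you unwind it via the DVR quotient $\O_{S,x}\cong\O_{\P^1,\iota(x)}/(\pi^e)$ and its one-dimensional socle.
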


\begin{proof}
Since the closed subscheme $S \subset \P^1$ is a complete intersection,
$S$ is a zero-dimensional Gorenstein scheme.
Hence $\mathcal{F}'$ is injective as an $\O_S$-module (\cite[Proposition 21.5]{Eisenbud}),
and we have $\Extsheaf^q_{S}( \mathcal{F},\, \mathcal{F}') = 0$ for any $q \geq 1$.
The second assertion follows from this because
the $q$-th cohomology sheaf of $\RHomsheaf_{S}( \mathcal{F},\, \mathcal{F}')$
is $\Extsheaf^q_{S}( \mathcal{F},\, \mathcal{F}')$.
\end{proof}

\begin{lem}
\label{Lemma:Duality:Sheaf}
For a locally free $\O_{\P^1}$-module $\mathcal{G}$ of rank one,
the complex $\iota^{!} \mathcal{G}$ is a free $\O_S$-module of rank one
shifted by one to the right.
(In other words, $(\iota^{!} \mathcal{G})[1]$ is quasi-isomorphic to
a free $\O_S$-module of rank one concentrated in degree zero.
Here ``$[1]$'' is the shift functor in the derived category
(\cite{Hartshorne:ResiduesDuality}).)
\end{lem}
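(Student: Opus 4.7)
The plan is to compute $\iota^{!}\mathcal{G}$ by hand using the Grothendieck duality quasi-isomorphism recalled at the start of the section together with a short locally free resolution of $\iota_{\ast}\O_S$. Since $S \subset \P^1$ is a closed subscheme with $\dim_k H^0(S,\O_S) = n+1$, it is the zero locus of a nonzero homogeneous polynomial $f$ of degree $n+1$, giving the Koszul-type resolution
\[ 0 \to \O_{\P^1}(-n-1) \xrightarrow{\cdot f} \O_{\P^1} \to \iota_{\ast}\O_S \to 0, \]
which reflects the fact that $\iota$ is a regular closed immersion of codimension one.

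Specializing the duality quasi-isomorphism to $\mathcal{F} = \O_S$ yields $\iota_{\ast}\iota^{!}\mathcal{G} \cong \RHomsheaf_{\P^1}(\iota_{\ast}\O_S,\mathcal{G})$, and using the resolution above to compute the right-hand side turns it into the two-term complex $[\mathcal{G} \xrightarrow{\cdot f} \mathcal{G}(n+1)]$ placed in degrees $0$ and $1$. Since $\mathcal{G}$ is a line bundle on $\P^1$, it is torsion-free, so multiplication by $f$ is injective and the complex has cohomology only in degree $1$, equal to the cokernel $\mathcal{G}(n+1)/f\mathcal{G}$. After applying $[1]$, this identifies $(\iota^{!}\mathcal{G})[1]$ (via $\iota_{\ast}$) with the cokernel sheaf, placed in degree $0$; in particular $(\iota^{!}\mathcal{G})[1]$ is a genuine coherent $\O_S$-module rather than a complex with higher cohomology.

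The remaining step is to identify this $\O_S$-module as free of rank one. Locally on $\P^1$ we can trivialize $\mathcal{G}$, whereupon the cokernel becomes $\O_{\P^1}/(f) = \iota_{\ast}\O_S$; thus the stalk of $(\iota^{!}\mathcal{G})[1]$ at every point $x \in S$ is a free rank-one module over $\O_{S,x}$, so the sheaf is invertible on $S$. To upgrade invertibility to global freeness, I would use that $S$ is zero-dimensional and noetherian, hence a finite disjoint union of spectra of artinian local rings; over each such local ring every finitely generated projective module is free, so $\Pic(S) = 0$ and any invertible $\O_S$-module is free of rank one.

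The main point requiring care is the bookkeeping of shifts in the derived category — verifying that the unique nonzero cohomology of $\RHomsheaf_{\P^1}(\iota_{\ast}\O_S,\mathcal{G})$ really sits in degree $1$, so that applying $[1]$ does produce a sheaf concentrated in degree $0$ — but this is immediate from the length-one resolution above, and everything else is essentially formal.
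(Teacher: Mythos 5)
Your proof is correct, but it follows a genuinely different route from the paper's. The paper argues abstractly: since $\iota^{!}$ is local on $\P^1$ near $S$ and $S$ is a finite set of points, one can replace $\mathcal{G}$ by $\Omega^1_{\P^1}$, invoke the fact that $\iota^{!}$ carries the dualizing complex of $\P^1$ to that of $S$, and then appeal to the Gorenstein property of the zero-dimensional complete intersection $S$ to conclude that its dualizing complex is a rank-one free $\O_S$-module in a single degree. You instead compute $\iota^{!}\mathcal{G}$ directly: you take the Koszul resolution $0 \to \O_{\P^1}(-n-1) \xrightarrow{\cdot f} \O_{\P^1} \to \iota_{\ast}\O_S \to 0$, feed it into the duality quasi-isomorphism with $\mathcal{F}=\O_S$ to realize $\iota_{\ast}\iota^{!}\mathcal{G}$ as the two-term complex $[\mathcal{G} \xrightarrow{\cdot f} \mathcal{G}(n+1)]$, read off that the only nonzero cohomology is the cokernel in degree $1$, and then identify that cokernel (locally trivializing both $\mathcal{G}$ and $\O_{\P^1}(n+1)$) with $\iota_{\ast}\O_S$, finishing with $\Pic(S)=0$. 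The trade-off is that the paper's argument is shorter but leans on nontrivial black boxes (preservation of dualizing complexes under $\iota^{!}$, the Gorenstein characterization), whereas yours is self-contained once the duality formula is granted and makes the shift bookkeeping and the codimension-one structure of the immersion completely explicit — and as a bonus it gives an explicit model for the dualizing module of $S$ as $\mathcal{G}(n+1)\otimes\O_S$. One small point worth stating explicitly: when you trivialize to identify the cokernel with $\O_{\P^1}/(f)$, you need to trivialize $\O_{\P^1}(n+1)$ as well as $\mathcal{G}$ on the chosen neighborhood of $S$ (which of course is possible since any such line bundle is locally trivial and $S$ is a finite set of points).
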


\begin{proof}
Since $S$ is zero-dimensional,
any locally free $\O_{\P^1}$-module $\mathcal{G}$ of rank one
is isomorphic to the canonical sheaf $\Omega^1_{\P^1}$ on some open neighborhood of $S \subset \P^1$.
Hence it is enough to prove the assertion when $\mathcal{G} = \Omega^1_{\P^1}$.
Since $\Omega^1_{\P^1}[1]$ is quasi-isomorphic to the dualizing complex on $\P^1$
(\cite[Chapter VII, Theorem 4.1]{Hartshorne:ResiduesDuality}),
$(\iota^{!} \Omega^1_{\P^1})[1]$ is quasi-isomorphic to the dualizing complex on $S$.
Hence $(\iota^{!} \Omega^1_{\P^1})[1]$ is a free $\O_S$-module of rank one
concentrated in degree zero because $S$ is a zero-dimensional Gorenstein scheme.
(See the proof of Lemma \ref{Lemma:Duality:QuasiIsom}.
See also \cite[Chapter V, Proposition 9.3]{Hartshorne:ResiduesDuality}.)
\end{proof}

\begin{lem}
\label{Lemma:Duality:Ext}
Let $\mathcal{F}$ be a coherent $\O_S$-module,
and $\mathcal{G}$ a locally free $\O_{\P^1}$-module of rank one.
Then there is a canonical isomorphism
\[ \iota_{\ast} \Homsheaf_{S}\big( \mathcal{F},\, (\iota^{!} \mathcal{G})[1] \big)
     \cong \Extsheaf^1_{\P^1}( \iota_{\ast} \mathcal{F},\, \mathcal{G}). \]
\end{lem}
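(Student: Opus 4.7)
The plan is to deduce the isomorphism from Grothendieck duality by tracking the degree shift via Lemma \ref{Lemma:Duality:Sheaf} and then collapsing the derived Hom to an ordinary Hom via Lemma \ref{Lemma:Duality:QuasiIsom}. Concretely, I would start from the canonical quasi-isomorphism
\[
\iota_{\ast} \RHomsheaf_{S}(\mathcal{F},\,\iota^{!} \mathcal{G})
\;\cong\;\RHomsheaf_{\P^1}(\iota_{\ast} \mathcal{F},\,\mathcal{G})
\]
recalled at the beginning of this section.

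Next, by Lemma \ref{Lemma:Duality:Sheaf}, the complex $(\iota^{!}\mathcal{G})[1]$ is quasi-isomorphic to a free $\O_S$-module of rank one concentrated in degree zero; set $\mathcal{L} := (\iota^{!}\mathcal{G})[1]$, so $\iota^{!}\mathcal{G} \cong \mathcal{L}[-1]$ in the derived category. Then
\[
\RHomsheaf_{S}(\mathcal{F},\,\iota^{!}\mathcal{G})
\;\cong\;\RHomsheaf_{S}(\mathcal{F},\,\mathcal{L})[-1],
\]
and by Lemma \ref{Lemma:Duality:QuasiIsom} applied to the free rank one sheaf $\mathcal{L}$, the right-hand side is quasi-isomorphic to $\Homsheaf_{S}(\mathcal{F},\,\mathcal{L})[-1]$, an honest coherent sheaf placed in degree one. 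Since $\iota$ is a closed immersion, $\iota_{\ast}$ is exact and commutes with the shift, so
\[
\iota_{\ast}\RHomsheaf_{S}(\mathcal{F},\,\iota^{!}\mathcal{G})
\;\cong\;\iota_{\ast}\Homsheaf_{S}\!\bigl(\mathcal{F},\,(\iota^{!}\mathcal{G})[1]\bigr)[-1].
\]

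Finally, taking the degree-one cohomology sheaf on both sides of the Grothendieck duality isomorphism yields, on the left, the coherent sheaf $\iota_{\ast}\Homsheaf_{S}(\mathcal{F},\,(\iota^{!}\mathcal{G})[1])$ (since it sits in degree one after the shift by $-1$), and on the right, by definition, $\Extsheaf^{1}_{\P^1}(\iota_{\ast}\mathcal{F},\,\mathcal{G})$. This gives the asserted canonical isomorphism. The only subtlety, and the single point where I would take care, is bookkeeping of the cohomological degree: one must check that the free $\O_S$-module appearing in Lemma \ref{Lemma:Duality:Sheaf} genuinely contributes a right shift by one, so that $H^1$ on both sides picks out the nontrivial term; there is no real geometric obstacle, only a sign/shift convention to verify.
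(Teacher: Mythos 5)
Your argument is correct and follows the paper's own proof in its essentials: both start from the Grothendieck duality quasi-isomorphism, invoke Lemma~\ref{Lemma:Duality:Sheaf} to identify $(\iota^{!}\mathcal{G})[1]$ with a free rank-one $\O_S$-module, use Lemma~\ref{Lemma:Duality:QuasiIsom} to collapse the derived Hom on the $S$-side to an ordinary Hom, and then extract the single nonvanishing cohomology sheaf. The only cosmetic difference is that the paper shifts the duality isomorphism by $[1]$ and takes $H^0$, whereas you keep the $[-1]$ and take $H^1$; the bookkeeping matches and the conclusion is the same.
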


\begin{proof}
By Lemma \ref{Lemma:Duality:Sheaf}, Lemma \ref{Lemma:Duality:QuasiIsom}
and Grothendieck duality, we have a canonical quasi-isomorphism
$\iota_{\ast} \Homsheaf_{S}\big( \mathcal{F},\, (\iota^{!} \mathcal{G})[1] \big)
 \cong \RHomsheaf_{\P^1}(\iota_{\ast} \mathcal{F},\,\mathcal{G}[1])$.
The cohomology sheaves of
$\RHomsheaf_{\P^1}(\iota_{\ast} \mathcal{F},\,\mathcal{G}[1])$
are concentrated in degree zero.
Hence we have
$\RHomsheaf_{\P^1}(\iota_{\ast} \mathcal{F},\,\mathcal{G}[1])
\cong \Extsheaf^1_{\P^1}( \iota_{\ast} \mathcal{F},\, \mathcal{G})$.
\end{proof}

\begin{defn}
\label{Definition:TransposeSymmetric}
Let $\mathcal{F}, \mathcal{F}', \mathcal{G}$ be coherent $\O_S$-modules.

\begin{enumerate}
\item For a morphism $\rho \colon \mathcal{F} \longrightarrow \mathcal{F}'$ of $\O_S$-modules,
the canonical morphism
\[
\Homsheaf_{S}( \mathcal{F}',\, \mathcal{G}) \longrightarrow
\Homsheaf_{S}( \mathcal{F},\, \mathcal{G}),\qquad
\varphi \mapsto (x \mapsto \varphi(\rho(x)))
\]
is called the {\it transpose} of $\rho$. We denote it by $\transp{\rho}$.

\item For a morphism
$\lambda \colon \mathcal{F} \longrightarrow \Homsheaf_{S}( \mathcal{F},\, \mathcal{G})$
of $\O_S$-modules, by composing the transpose $\transp{\lambda}$
with the canonical morphism
\[
\mathrm{can} \colon \mathcal{F} \longrightarrow 
\Homsheaf_{S}\big( \Homsheaf_{S}( \mathcal{F},\, \mathcal{G}),\, \mathcal{G}\, \big),
\qquad x \mapsto (\psi \mapsto \psi(x)),
\]
we obtain $\transp{\lambda} \circ \mathrm{can} \colon
\mathcal{F} \longrightarrow \Homsheaf_{S}( \mathcal{F},\, \mathcal{G})$.
We say $\lambda$ is {\it symmetric} if $\lambda = \transp{\lambda} \circ \mathrm{can}$.
\end{enumerate}
\end{defn}

\begin{lem}
\label{Lemma:RankOneSymmetric}
Let $\mathcal{F}, \mathcal{G}$ be coherent $\O_S$-modules.
If $\mathcal{F}$ is a free $\O_S$-module of rank one,
any morphism
$\lambda \colon \mathcal{F} \longrightarrow \Homsheaf_{S}( \mathcal{F},\, \mathcal{G})$
of $\O_S$-modules is symmetric.
\end{lem}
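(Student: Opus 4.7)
The plan is to reduce the asserted identity $\lambda = \transp{\lambda} \circ \mathrm{can}$ to a pointwise condition on sections, and then invoke the rank-one hypothesis together with commutativity of $\O_S$. First I would unwind what symmetry means on local sections $x,y$ of $\mathcal{F}$ over an open $U \subset S$. By the definition of $\mathrm{can}$, one has $\mathrm{can}(x) = (\psi \mapsto \psi(x))$, and applying $\transp{\lambda}$ (which sends $\varphi \mapsto (y \mapsto \varphi(\lambda(y)))$) gives
\[
(\transp{\lambda} \circ \mathrm{can})(x) \;=\; \bigl(y \mapsto \mathrm{can}(x)(\lambda(y))\bigr) \;=\; \bigl(y \mapsto \lambda(y)(x)\bigr).
\]
Comparing with $\lambda(x) = (y \mapsto \lambda(x)(y))$, the symmetry condition is exactly the bilinear-form type identity $\lambda(x)(y) = \lambda(y)(x)$ for all local sections $x,y$ of $\mathcal{F}$.

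Next I would exploit that $\mathcal{F}$ is free of rank one. Since $S$ is affine (being finite over $k$), there is a global generator $e \in \mathcal{F}(S)$, and every section of $\mathcal{F}$ is of the form $ae$ for a unique $a \in \O_S$. Using $\O_S$-linearity of $\lambda$ and of the sheaf homomorphism $\lambda(e) \colon \mathcal{F} \to \mathcal{G}$, I would compute
\[
\lambda(ae)(be) \;=\; a\cdot\lambda(e)(be) \;=\; ab\cdot\lambda(e)(e),\qquad \lambda(be)(ae) \;=\; ba\cdot\lambda(e)(e),
\]
and observe that commutativity of $\O_S$ makes the two sides equal. This establishes the required pointwise identity and hence the symmetry of $\lambda$.

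There is essentially no obstacle here: the rank-one assumption forces every value of the pairing $(x,y) \mapsto \lambda(x)(y)$ to be a scalar multiple of the single element $\lambda(e)(e)$, after which commutativity of the structure sheaf finishes the job. The only care required is the initial unwinding of the definition of \emph{symmetric}, making explicit the roles of $\transp{\lambda}$ and the canonical double-dual map $\mathrm{can}$.
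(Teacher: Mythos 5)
Your proof is correct and is essentially the same argument as the paper's: both reduce symmetry to the pointwise identity $\lambda(x)(y) = \lambda(y)(x)$ and then use the rank-one hypothesis to write $\lambda(ae)(be) = ab\,\lambda(e)(e) = \lambda(be)(ae)$, which follows from commutativity of $\O_S$. The only cosmetic difference is that the paper normalizes to $\mathcal{F} = \O_S$ and works directly with the corresponding $A = H^0(S,\O_S)$-module homomorphism, while you phrase the same computation via a choice of global generator $e$.
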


\begin{proof}
We may assume $\mathcal{F} = \O_S$.
We put $A := H^0(S, \O_S)$.
Then, $N := H^0(S, \mathcal{G})$ is a finitely generated $A$-module.
The morphism
$\lambda \colon \mathcal{F} \longrightarrow \Homsheaf_{S}( \mathcal{F},\, \mathcal{G})$
corresponds to a homomorphism
$\lambda \colon A \longrightarrow \Hom_{A}(A,N)$ of $A$-modules.
(We denote it by the same letter $\lambda$.)
The composite of
\[ \transp{\lambda} \circ \mathrm{can} \colon
   A \longrightarrow \Hom_{A}\big( \Hom_{A}(A,N), N \big)
   \longrightarrow \Hom_{A}(A,N) \]
is $a \mapsto (\varphi \mapsto \varphi(a)) \mapsto (b \mapsto (\lambda(b))(a))$
for any $a,b \in A$.
Since 
$(\lambda(a))(b)  = a b (\lambda(1))(1) = (\lambda(b))(a)$,
we see that $\lambda$ is symmetric.
\end{proof}

\begin{lem}
\label{Lemma:Duality:SelfDual}
For a coherent $\O_S$-module $\mathcal{F}$,
there is a symmetric isomorphism
$\lambda \colon \mathcal{F} \overset{\sim}{\longrightarrow}
\mathcal{F}^{\vee} := \Homsheaf_{S}\big( \mathcal{F},\,\O_S \big)$.
\end{lem}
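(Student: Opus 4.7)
The plan is to reduce, via the structure theorem for modules over principal ideal rings, to the case of a cyclic summand, where a symmetric pairing can be written down by hand.

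First, the scheme $S$ is zero-dimensional and finite over $k$, so $A := H^0(S,\O_S)$ decomposes as a finite product $\prod_i A_i$ of Artinian local rings, giving a disjoint union $S = \bigsqcup_i S_i$ and a direct sum decomposition $\mathcal{F} = \bigoplus_i \mathcal{F}_i$. Since the formation of $\mathcal{F}^{\vee}$ commutes with finite direct sums and a direct sum of symmetric isomorphisms is symmetric, it suffices to construct a symmetric isomorphism $\lambda_i \colon \mathcal{F}_i \overset{\sim}{\longrightarrow} \mathcal{F}_i^{\vee}$ on each local component separately.

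Second, each local ring $A_i = \O_{S,x_i}$ is a quotient $\O_{\P^1,x_i}/(\pi_i^{e_i})$ of the DVR $\O_{\P^1,x_i}$, hence a principal Artinian local ring. The structure theorem for finitely generated modules over a principal ideal ring then yields a decomposition $H^0(S_i,\mathcal{F}_i) \cong \bigoplus_j A_i/(\pi_i^{f_j})$ with $f_j \leq e_i$. As before, we are reduced to the case of a single cyclic module $C = A_i/(\pi_i^f)$.

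For such a $C$, direct computation gives $C^{\vee} = \Hom_{A_i}(C,A_i) = \Ann_{A_i}(\pi_i^f) = (\pi_i^{e_i-f})$, a cyclic $A_i$-module generated by $\pi_i^{e_i-f}$ with the same annihilator as $C$. Hence the map $\lambda \colon C \to C^{\vee}$ defined by $\bar 1 \mapsto (\bar a \mapsto \pi_i^{e_i-f} a)$ is an isomorphism, and the corresponding bilinear form $(\bar a,\bar b) \mapsto \pi_i^{e_i-f}\, ab$ on $C$ is symmetric by the commutativity of $A_i$; the verification is formally identical to the proof of Lemma \ref{Lemma:RankOneSymmetric}, since $C$ is generated by a single element. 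Assembling these pieces across cyclic summands and local components produces the required $\lambda$.

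The main obstacle is the reduction to cyclic summands, which relies on each $A_i$ being a principal ideal ring. This in turn uses crucially that $S$ embeds in the smooth curve $\P^1$; for a more general Artinian Gorenstein base one can exhibit finitely generated modules not isomorphic to their duals, so the embedding $\iota \colon S \hookrightarrow \P^1$ plays a genuine role and cannot be removed from the hypothesis.
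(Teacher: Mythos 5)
Your proof is correct and follows essentially the same route as the paper's: decompose $S$ into its local components, reduce by the structure theorem for modules over a principal ideal ring to a single cyclic summand, write down the pairing there, and invoke the argument of Lemma \ref{Lemma:RankOneSymmetric} (which, as you note, really only needs that the module is cyclic) to get symmetry. The only cosmetic difference is that the paper identifies each local ring $A_i$ with $k_i[T]/(T^{m_i})$ via a coefficient field and then cites Bourbaki for the self-duality of cyclic modules, whereas you work directly with $A_i = \O_{\P^1,x_i}/(\pi_i^{e_i})$ as a quotient of a DVR and compute $C^{\vee} = \Ann_{A_i}(\pi_i^f) = (\pi_i^{e_i - f})$ explicitly; both exploit that $A_i$ is a principal Artinian local ring precisely because $S$ sits in the smooth curve $\P^1$.
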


\begin{proof}
Since $S$ is zero-dimensional,
we write the closed points of $S$ as $S = \{ x_0,\ldots,x_r \}$.
We denote the coordinate ring of $x_i$ as $A_i := H^0(x_i,\O_{x_i})$.
Then, $S$ is the disjoint union of $\Spec A_i$,
and each $A_i$ is a $k$-algebra isomorphic to $k_i[T]/(T^{m_i})$
for some finite extension $k_i/k$ and some $m_i \geq 1$.
Hence we may assume $S = \Spec k'[T]/(T^m)$
for a finite extension $k'/k$ and $m \geq 1$.
We may assume $M = k'[T]/(T^{m'})$ for some $m' \leq m$.
Then, by \cite[Chapter VII, \S 4.9]{BourbakiAlgebraII},
there is an isomorphism
$M \cong \Hom_{k'[T]/(T^m)}\big( M,\,k'[T]/(T^m) \big)$ of $k'[T]/(T^m)$-modules.
Any such isomorphism is symmetric by
Lemma \ref{Lemma:RankOneSymmetric}.
\end{proof}

\begin{lem}
\label{Lemma:DiagonalizationQuadraticForm}
Assume that $\chara k \neq 2$.
Let $\mathcal{F}$ be a coherent $\O_S$-module, and $\mathcal{G}$ a free $\O_S$-module of rank one.
Let $\lambda \colon \mathcal{F} \overset{\sim}{\longrightarrow} \Homsheaf_{S}( \mathcal{F},\, \mathcal{G})$
be a symmetric isomorphism.
Then the pair $(\mathcal{F}, \lambda)$ can be decomposed as
$(\mathcal{F}, \lambda) \cong \big( \bigoplus_{i=0}^r \mathcal{F}_i ,\, \bigoplus_{i=0}^r \lambda_i \big)$,
where, for each $i$,
$\mathcal{F}_i$ is a coherent $\O_S$-module and
$\lambda_i \colon \mathcal{F}_i \overset{\sim}{\longrightarrow} \Homsheaf_{S}( \mathcal{F}_i,\, \mathcal{G})$
is a symmetric isomorphism
such that $\Supp \mathcal{F}_i$ consists of one point
and $H^0(S,\mathcal{F}_i)$ is generated by one element
as an $H^0(S,\O_S)$-module.
\end{lem}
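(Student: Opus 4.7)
The plan is to decompose $(\mathcal{F},\lambda)$ in two stages: first along the connected components of $S$, reducing to a single-point support; then, at each component, iteratively splitting off a cyclic orthogonal summand via a polarization argument. Since $S$ is zero-dimensional, $S = \bigsqcup_j \Spec A_j$ with each $A_j$ local Artinian, and as observed in the proof of Lemma \ref{Lemma:Duality:SelfDual} each $A_j$ has the special form $k_j[T]/(T^{m_j})$. The sheaf $\mathcal{F}$ decomposes canonically as $\bigoplus_j \mathcal{F}_j$, the sheaves $\Homsheaf_S(\mathcal{F}_j,\mathcal{G})$ have pairwise disjoint supports, and therefore $\lambda$ must respect this splitting. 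So we may assume $S = \Spec A$ with $A = k'[T]/(T^m)$. Fixing a trivialization $\mathcal{G} \cong \O_S$, the data is equivalent to a finitely generated $A$-module $M = H^0(S,\mathcal{F})$ together with a non-degenerate symmetric $A$-bilinear form $B(x,y) := \lambda(x)(y)$, and the goal becomes to decompose $(M,B)$ into cyclic orthogonal summands.

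The key step is to find $x \in M$ such that $Ax$ is a non-degenerate orthogonal summand. Choose $x_0 \in M$ with $\Ann_A(x_0) = (T^n)$ for the maximal possible $n$. Since $\lambda$ is an $A$-linear isomorphism, $\Ann_A(\lambda(x_0)) = (T^n)$, which forces $\Im(\lambda(x_0)) = (T^{m-n})$, and we may choose $y \in M$ with $B(x_0,y) = T^{m-n}$. By maximality of $n$, both $B(x_0,x_0)$ and $B(y,y)$ lie in $(T^{m-n})$, while $2B(x_0,y) = 2T^{m-n}$ has $T$-adic valuation exactly $m-n$ because $2 \in A^\times$ under the assumption $\chara k \neq 2$. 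Setting $x := x_0 + y$, the polarization identity
\[ B(x,x) = B(x_0,x_0) + 2B(x_0,y) + B(y,y) \]
gives $B(x,x) = uT^{m-n}$ for some unit $u \in A^\times$, and $\Ann_A(x) = (T^n)$ is preserved. This polarization is the main obstacle: it is the only place where $\chara k \neq 2$ enters, and the argument genuinely fails in characteristic two since the dominant term $2B(x_0,y)$ vanishes.

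Given such an $x$, the restriction $B|_{Ax}$ is non-degenerate, because under $Ax \cong A/(T^n)$ it corresponds to multiplication by the unit $u$. Since $A$ is Gorenstein of dimension zero (so $A$ is injective as an $A$-module), the restriction $\Hom_A(M,A) \to \Hom_A(Ax,A)$ is surjective; precomposing with $\lambda$ we obtain a surjection $\pi\colon M \to \Hom_A(Ax,A)$, $z \mapsto B(z,-)|_{Ax}$, with $\ker\pi = (Ax)^\perp$. The composition $Ax \hookrightarrow M \xrightarrow{\pi} \Hom_A(Ax,A) \cong Ax$ is multiplication by $u$, hence an isomorphism, so the inclusion splits $\pi$ and we get $M = Ax \oplus (Ax)^\perp$ with $B$ non-degenerate on each summand. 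Since $\length_A (Ax)^\perp < \length_A M$, induction on the length finishes the decomposition into cyclic summands, which at the level of $\O_S$-modules yields the required $(\mathcal{F}_i,\lambda_i)$.
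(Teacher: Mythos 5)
Your strategy — reduce to $S = \Spec k'[T]/(T^m)$, then inductively split off a cyclic orthogonal direct summand — is sound, and gives a self-contained argument where the paper's proof simply performs the same reduction and then cites the existence of orthogonal decompositions over $k'[T]/(T^m)$ (Milnor--Husemoller and Waterhouse). However, the polarization step as you have written it contains a genuine gap.

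You claim that for $x := x_0 + y$ the identity $B(x,x) = B(x_0,x_0) + 2B(x_0,y) + B(y,y)$ yields $uT^{m-n}$ with $u$ a unit, on the grounds that the outer terms lie in $(T^{m-n})$ while $2B(x_0,y) = 2T^{m-n}$ has valuation exactly $m-n$. This does not follow: the outer terms are only known to lie in $(T^{m-n})$, not in $(T^{m-n+1})$, so their leading coefficients can cancel the $2T^{m-n}$. Concretely, take $A = k'[T]/(T^2)$, $M = A^2$, with Gram matrix
$\bigl(\begin{smallmatrix} -2 & 1 \\ 1 & 0 \end{smallmatrix}\bigr)$,
and $x_0 = e_1$, $y = e_2$; here $n = 2$, $m - n = 0$, and $B(x_0 + y, x_0 + y) = -2 + 2 + 0 = 0$, not a unit, even though the form is nondegenerate. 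The standard repair is a case distinction: write $B(x_0,x_0) = aT^{m-n}$ and $B(y,y) = bT^{m-n}$. If $a$ is a unit, take $x = x_0$; if $b$ is a unit, take $x = y$ (note $\Ann_A(y) = (T^n)$ because $T^{n-1}B(x_0,y) = T^{m-1} \neq 0$); only when both $a$ and $b$ are non-units do you need polarization, and there $a + 2 + b \equiv 2 \pmod{T}$ really is a unit since $\chara k \neq 2$. Once this is inserted, the rest of your argument — nondegeneracy of $B|_{Ax}$, the Gorenstein-injectivity splitting $M = Ax \oplus (Ax)^{\perp}$, and the induction on length — goes through.
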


\begin{proof}
We may assume $S = \Spec k'[T]/(T^m)$
for a finite extension $k'/k$ and $m \geq 1$
(see the proof of Lemma \ref{Lemma:Duality:SelfDual}).
Because $2$ is invertible in $k'[T]/(T^m)$,
the assertion follows from the fact that
every symmetric inner product space over $k'[T]/(T^m)$ has an orthogonal basis
(\cite[Chapter 1, Corollary 3.4]{MilnorHusemoller}, \cite[\S 1, Theorem 1.1]{Waterhouse1}).
\end{proof}

\begin{rem}
The isomorphism classes of $\mathcal{F}_0,\ldots,\mathcal{F}_r$
are, up to permutation, independent of the choice of $\lambda$.
They are uniquely determined by $\mathcal{F}$ by
\cite[Chapter VII, \S 4.9]{BourbakiAlgebraII}.
\end{rem}

\begin{lem}
\label{Lemma:ClassificationQuadraticForm}
Assume that $k$ is algebraically closed of characteristic different from two.
Let $\mathcal{F}$ be a coherent $\O_S$-module,
and $\mathcal{G}$ a free $\O_S$-module of rank one.
Let $\lambda,\lambda' \colon \mathcal{F} \overset{\sim}{\longrightarrow} \Homsheaf_{S}( \mathcal{F},\, \mathcal{G})$
be symmetric isomorphisms.
Then there is an automorphism
$\rho \colon \mathcal{F} \overset{\sim}{\longrightarrow} \mathcal{F}$
with $\lambda = (\transp{\rho}) \circ \lambda' \circ \rho$.
\end{lem}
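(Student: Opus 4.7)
The plan is to reduce via Lemma \ref{Lemma:DiagonalizationQuadraticForm} to the case where $\mathcal{F}$ is cyclic over an Artinian local ring, and then exploit that, when $k$ is algebraically closed and $\chara k \neq 2$, every unit of such a ring is a square. Once both reductions are in place, the desired $\rho$ is built summand by summand.

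First, I would apply Lemma \ref{Lemma:DiagonalizationQuadraticForm} to both $(\mathcal{F}, \lambda)$ and $(\mathcal{F}, \lambda')$, obtaining two orthogonal decompositions of $\mathcal{F}$ into cyclic summands, each supported at a single closed point of $S$. By the Remark immediately after that lemma, the isomorphism classes of the summands depend only on $\mathcal{F}$ (up to permutation), so after reindexing, the summands of the two decompositions can be paired up by $\mathcal{O}_S$-module isomorphisms. Assembling these yields an automorphism $\sigma \in \Aut(\mathcal{F})$ carrying the $\lambda'$-decomposition onto the $\lambda$-decomposition. Replacing $\lambda'$ by $\transp{\sigma} \circ \lambda' \circ \sigma$ (and remembering to compose the eventual $\rho$ on the left with $\sigma$ at the end), I may assume that $\lambda$ and $\lambda'$ share a common orthogonal decomposition $\mathcal{F} = \bigoplus_i \mathcal{F}_i$. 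It then suffices to construct $\rho_i \in \Aut(\mathcal{F}_i)$ with $\lambda_i = \transp{\rho_i} \circ \lambda'_i \circ \rho_i$ for each $i$ and take $\rho = \bigoplus_i \rho_i$.

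Next I would reduce to an explicit local computation. Since $k$ is algebraically closed, the local ring of $S$ at the support of a single summand has the form $A = k[T]/(T^m)$, and the summand itself corresponds to an $A$-module $M \cong A/(T^{m'})$ for some $1 \leq m' \leq m$. Using the identification $\mathcal{G} \cong \O_S$ (which exists since $\mathcal{G}$ is free of rank one and $S$ is affine) together with the self-duality supplied by Lemma \ref{Lemma:Duality:SelfDual}, I would identify $\Homsheaf_S(M, \mathcal{G})$ with $M$ itself (concretely, sending $\varphi \in \Hom_A(M, A)$ to the element $c \in A/(T^{m'})$ determined by $\varphi(1) = T^{m-m'} c$). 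Under these identifications, each symmetric isomorphism corresponds to multiplication by a unit $u \in (A/(T^{m'}))^\times$, each automorphism of $M$ corresponds to multiplication by a unit $v \in (A/(T^{m'}))^\times$, and a direct check shows that $\transp{\rho}$ is again multiplication by the same $v$. Hence the desired identity reduces to solving $v^2 = u (u')^{-1}$ in $(k[T]/(T^{m'}))^\times$.

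The final step is to show that every element $w = a_0 + a_1 T + \cdots + a_{m'-1} T^{m'-1}$ of $(k[T]/(T^{m'}))^\times$, with $a_0 \in k^\times$, admits a square root. Choose $b_0 \in k$ with $b_0^2 = a_0$ (possible because $k$ is algebraically closed) and successively solve
\[
2 b_0 b_j \;=\; a_j \;-\; \sum_{\ell=1}^{j-1} b_\ell \, b_{j-\ell}, \qquad j = 1, 2, \ldots, m'-1,
\]
for $b_j \in k$; this is solvable because $\chara k \neq 2$ and $b_0 \in k^\times$. Then $v := b_0 + b_1 T + \cdots + b_{m'-1} T^{m'-1}$ satisfies $v^2 = w$. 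The step I expect to be the main obstacle is the bookkeeping in the reduction: one must carefully invoke the uniqueness part of the Remark after Lemma \ref{Lemma:DiagonalizationQuadraticForm} to align the orthogonal summands of the two forms before any local computation becomes available, and one must verify that the transpose on a cyclic module coincides with multiplication by $v$, rather than by some dual element, so that the equation genuinely becomes $v^2 = u(u')^{-1}$.
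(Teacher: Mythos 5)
Your proof is correct and follows essentially the same route as the paper: reduce via Lemma \ref{Lemma:DiagonalizationQuadraticForm} and the uniqueness of the cyclic decomposition (\cite[Chapter VII, \S 4.9]{BourbakiAlgebraII}) to the case of a single cyclic module over $k[T]/(T^m)$, then observe that the problem becomes solving $v^2 = u(u')^{-1}$ among units, which is possible because squaring is surjective on $(k[T]/(T^{m'}))^\times$ when $k$ is algebraically closed of characteristic $\neq 2$. The paper simply cites Hensel's lemma for that last point, whereas you write out the lifting recursion explicitly, and you are somewhat more careful than the paper in spelling out how to align the two orthogonal decompositions; these are presentational rather than substantive differences.
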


\begin{proof}
We may assume $S = \Spec k'[T]/(T^m)$
for a finite extension $k'/k$ and $m \geq 1$
(see the proof of Lemma \ref{Lemma:Duality:SelfDual}).
Then, $M := H^0(S,\mathcal{F})$ is a finitely generated $k'[T]/(T^m)$-module.
By Lemma \ref{Lemma:DiagonalizationQuadraticForm},
the pair $(M,\lambda)$ can be decomposed as
$(M,\lambda) \cong \big( \bigoplus_{i=0}^r k'[T]/(T^{m_i}),\,\lambda_i \big)$.
The integers $m_0,\ldots,m_r$ are uniquely determined by $M$
(\cite[Chapter VII, \S 4.9]{BourbakiAlgebraII}).
The same is true for $\lambda'$.
Hence we may assume $M = k'[T]/(T^{m_i})$ for some $m_i \leq m$.
In this case, the assertion follows from the fact that
$(k'[T]/(T^{m_i}))^{\times} \longrightarrow (k'[T]/(T^{m_i}))^{\times},\ \alpha \mapsto \alpha^2$
is surjective.
(It is a consequence of Hensel's lemma because
$k$ is algebraically closed of characteristic different from two.)
\end{proof}

\begin{lem}
\label{Lemma:Resolution}
Let $\mathcal{F}$ be a coherent $\O_S$-module with
$\dim_k H^0(S,\mathcal{F}) = r+1$.
Let $\{ s_0,s_1,\ldots,s_r \}$ be an ordered $k$-basis of
$H^0(S,\mathcal{F})$.
Let $p \colon \O_{\P^1}^{\oplus (r+1)} \longrightarrow \iota_{\ast} \mathcal{F}$
be a surjective morphism of $\O_{\P^1}$-modules
which sends
the standard $k$-basis of $H^0(\P^1,\O_{\P^1}^{\oplus (r+1)}) = k^{\oplus (r+1)}$
to $\{ s_0,s_1,\ldots,s_r \}$.
Then the kernel $\Ker p$ is a locally free $\O_{\P^1}$-module
isomorphic to $\O_{\P^1}(-1)^{\oplus (r+1)}$.
\end{lem}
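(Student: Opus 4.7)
The plan is to show that $\Ker p$ is a locally free $\O_{\P^1}$-module of rank $r+1$, apply Grothendieck's splitting theorem to write it as a direct sum of line bundles $\bigoplus_{i=0}^{r} \O_{\P^1}(a_i)$, and then pin down the integers $a_i$ by cohomological considerations coming from the hypothesis that $p$ induces an isomorphism on global sections.

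First, I would observe that $\iota_{\ast} \mathcal{F}$ is a torsion sheaf (its support is the zero-dimensional subscheme $S$). Hence the kernel of the surjection $p$ is a torsion-free coherent $\O_{\P^1}$-module, and since $\P^1$ is a smooth curve, $\Ker p$ is a locally free $\O_{\P^1}$-module. Its rank equals the rank of $\O_{\P^1}^{\oplus(r+1)}$, namely $r+1$. By the theorem of Grothendieck on vector bundles over $\P^1$, we have $\Ker p \cong \bigoplus_{i=0}^{r} \O_{\P^1}(a_i)$ for some integers $a_0,\ldots,a_r$.

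Next, the defining property of $p$ is exactly that the induced $k$-linear map on global sections,
\[
H^0(\P^1, p) \colon H^0(\P^1, \O_{\P^1}^{\oplus(r+1)}) = k^{\oplus(r+1)} \longrightarrow H^0(\P^1, \iota_{\ast}\mathcal{F}) = H^0(S,\mathcal{F}),
\]
sends the standard basis to a basis, and so is an isomorphism. Taking the long exact sequence in cohomology associated with $0 \to \Ker p \to \O_{\P^1}^{\oplus(r+1)} \to \iota_{\ast}\mathcal{F} \to 0$ and using $H^1(\P^1, \O_{\P^1}^{\oplus(r+1)}) = 0$, I would deduce
\[
H^0(\P^1, \Ker p) = 0 \qquad \text{and} \qquad H^1(\P^1, \Ker p) = 0.
\]

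Finally, I would use the splitting of $\Ker p$: the vanishing $H^0(\P^1, \Ker p) = \bigoplus_i H^0(\P^1, \O_{\P^1}(a_i)) = 0$ forces $a_i \leq -1$ for every $i$, while $H^1(\P^1, \Ker p) = \bigoplus_i H^1(\P^1, \O_{\P^1}(a_i)) = 0$ forces $a_i \geq -1$ for every $i$. Combining the two inequalities gives $a_i = -1$ for all $i$, so $\Ker p \cong \O_{\P^1}(-1)^{\oplus(r+1)}$ as desired. There is no serious obstacle; the only point requiring a bit of care is confirming that $\Ker p$ is locally free (which uses that the quotient is torsion and the base is a smooth curve) so that Grothendieck's splitting theorem applies.
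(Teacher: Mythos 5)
Your argument is correct and follows essentially the same route as the paper: show $\Ker p$ is torsion-free (hence locally free on the smooth curve $\P^1$) of rank $r+1$, invoke the Birkhoff--Grothendieck splitting, and then use the long exact cohomology sequence together with $H^1(\P^1,\O_{\P^1})=0$ and the fact that $H^0(p)$ is an isomorphism to force $H^0(\Ker p)=H^1(\Ker p)=0$, hence $a_i=-1$ for all $i$. The only small imprecision is the word ``Hence'' attributing torsion-freeness of $\Ker p$ to the quotient being torsion: torsion-freeness simply comes from $\Ker p$ being a subsheaf of the free sheaf $\O_{\P^1}^{\oplus(r+1)}$, while the torsion-ness of the quotient is what pins down the rank.
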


\begin{proof}
This lemma is well-known at least when $k$ is algebraically closed.
The following proof works over arbitrary fields.
From the short exact sequence
\[
\begin{CD}
0 @>>> \Ker p
  @>>> \O_{\P^1}^{\oplus (r+1)}
  @>{p}>> \iota_{\ast} \mathcal{F} @>>> 0,
\end{CD}
\]
we see that $\Ker p$ is torsion-free.
It is locally free because $\P^1$ is regular and one-dimensional.
Hence we have $\Ker p \cong \bigoplus_{i=0}^r \O_{\P^1}(a_i)$
for some $a_i \in \Z$.
(This fact is sometimes referred to as
{\it Birkhoff-Grothendieck's classification of vector bundles on $\P^1$}.
For a proof of it which works over arbitrary fields,
see \cite{HazewinkelMartin}, \cite[Theorem 1.3.1]{HuybrechtsLehn}.)
We have $H^0(\P^1,\O_{\P^1}(a_i)) = H^1(\P^1,\O_{\P^1}(a_i)) = 0$
for all $i$ because $H^1(\P^1,\O_{\P^1}) = 0$.
Hence we have $a_i = -1$ for all $i$ by
\cite[Chapter III, Theorem 5.1]{Hartshorne:AlgebraicGeometry}.
\end{proof}

\section{Bijections on pairs of symmetric matrices}
\label{Section:KeyBijection}

In this section, we first prove a rigidified bijection between pairs of symmetric matrices
and equivalence classes of certain coherent sheaves with additional data
on discriminant subschemes.
Then we prove Theorem \ref{Theorem:MainTheorem}
by considering the action of $\GL_{n+1}(k)$.
A variant of Theorem \ref{Theorem:MainTheorem} for
$\SL_{n+1}(k)$-orbits can be proved by the same method.

As in Section \ref{Section:GrothendieckDualityResolution},
we fix a field $k$, a positive integer $n \geq 1$,
and a closed subscheme $\iota \colon S \hookrightarrow \P^1$
which is finite over $k$ and satisfies $\dim_k H^0(S,\O_S) = n+1$.
By Lemma \ref{Lemma:Duality:Sheaf},
we see that $(\iota^{!}\O_{\P^1}(-1))[1]$ is a free $\O_S$-module of rank one.
We {\it fix} an isomorphism
\[ c \colon (\iota^{!}\O_{\P^1}(-1))[1] \overset{\sim}{\longrightarrow} \O_S \]
of $\O_S$-modules.

\begin{prop}
\label{Proposition:Key}
\begin{enumerate}
\item There is a bijection between the following sets.
\begin{itemize}
\item The set $(k^2 \otimes \Sym_2 k^{n+1})_S$.
\item The set of equivalence classes of triples $(\mathcal{F},\lambda,s)$,
where
\begin{itemize}
\item $\mathcal{F}$ is a coherent $\O_{S}$-module with
$\length_{\O_{S,x}} \O_{S,x} = \length_{\O_{S,x}} {\mathcal{F}}_{x}$
for each $x \in S$,
\item $\lambda \colon \mathcal{F} \overset{\sim}{\longrightarrow} \mathcal{F}^{\vee}$
is a symmetric isomorphism of $\O_{S}$-modules, and
\item $s = \{ s_0,s_1,\ldots,s_n \}$ is an ordered $k$-basis of $H^0(S,\mathcal{F})$.
\end{itemize}
Here two triples $(\mathcal{F},\lambda,s),(\mathcal{F}',\lambda',s')$
are equivalent if there is an isomorphism
$\rho \colon \mathcal{F} \overset{\sim}{\longrightarrow} \mathcal{F}'$
of $\O_{S}$-modules with $\lambda = (\transp{\rho}) \circ \lambda' \circ \rho$
and $\rho(s)  = s'$.
\end{itemize}

\item For a pair $M = (M_0,M_1) \in (k^2 \otimes \Sym_2 k^{n+1})_S$,
take a triple $(\mathcal{F},\lambda,s)$ corresponding to $M$ by (1).
For an invertible matrix $P \in \GL_{n+1}(k)$, define the ordered $k$-basis
$s\,\transp{P}^{-1} = \{ (s\,\transp{P}^{-1})_i \}_{0 \leq i \leq n}$ by
\[ (s\,\transp{P}^{-1})_i := \sum_{j=0}^{n} (\transp{P}^{-1})_{j,i} \, s_j, \]
where $(\transp{P}^{-1})_{j,i} \in k$ is the $(j,i)$-entry of $\transp{P}^{-1}$.
Then, the pair $M \cdot P := (\, \transp{P} M_0 P,\, \transp{P} M_1 P\,)$
corresponds to the triple $(\mathcal{F},\lambda, s\,\transp{P}^{-1})$ by (1).
\end{enumerate}
\end{prop}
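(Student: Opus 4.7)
The plan is to construct the bijection explicitly in both directions by passing through the minimal resolution of $\iota_{*}\mathcal{F}$ as an $\mathcal{O}_{\mathbb{P}^1}$-module, mimicking the approach of \cite{Ishitsuka} but using Lemma \ref{Lemma:Resolution} to replace the step where \cite[Proposition 2.6]{Ishitsuka} fails on $\mathbb{P}^1$.

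From pairs to triples: given $M = (M_0,M_1)$ in $(k^2 \otimes \Sym_2 k^{n+1})_S$, form the morphism
\[
\varphi_M := X_0 M_0 + X_1 M_1 \colon \mathcal{O}_{\mathbb{P}^1}(-1)^{\oplus (n+1)} \longrightarrow \mathcal{O}_{\mathbb{P}^1}^{\oplus (n+1)}.
\]
Since $\det \varphi_M = \disc(M) \neq 0$, this is injective with torsion cokernel of the form $\iota_{*}\mathcal{F}$; a local Fitting-ideal computation at each $x \in S$ identifies $\length_{\mathcal{O}_{S,x}}\mathcal{F}_x$ with the multiplicity of $\disc(M)$ at $x$, giving the length condition. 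Taking global sections of the surjection $p \colon \mathcal{O}_{\mathbb{P}^1}^{\oplus (n+1)} \twoheadrightarrow \iota_{*}\mathcal{F}$ (and using $H^1(\mathbb{P}^1,\mathcal{O}(-1)^{\oplus(n+1)}) = 0$) yields the basis $s$ as the image of the standard basis of $k^{\oplus(n+1)}$. To produce $\lambda$, apply $\Homsheaf_{\mathbb{P}^1}(-,\mathcal{O}_{\mathbb{P}^1}(-1))$ to the short exact sequence. Because $\iota_{*}\mathcal{F}$ is torsion and $\mathcal{O}_{\mathbb{P}^1}(-1)$ is torsion-free, the Hom term vanishes, producing a second short exact sequence of the same shape with $\varphi_M$ replaced by ${}^{t}\varphi_M$ and $\iota_{*}\mathcal{F}$ replaced by $\Extsheaf^{1}_{\mathbb{P}^1}(\iota_{*}\mathcal{F},\mathcal{O}(-1))$. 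By Lemma \ref{Lemma:Duality:Ext} and the fixed rigidification $c$, the latter is canonically $\iota_{*}\mathcal{F}^{\vee}$. The symmetry $\varphi_M = {}^{t}\varphi_M$ inherited from $M_i = {}^{t}M_i$ then produces, via the five lemma, an isomorphism $\lambda \colon \mathcal{F} \overset{\sim}{\longrightarrow} \mathcal{F}^{\vee}$, and the rigidification $c$ is what pins down $\lambda$ canonically.

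From triples to pairs: given $(\mathcal{F},\lambda,s)$, the length condition gives $\dim_{k}H^0(S,\mathcal{F}) = n+1$, so $s$ yields a surjection $p$ satisfying the hypothesis of Lemma \ref{Lemma:Resolution}; choose an $\mathcal{O}_{\mathbb{P}^1}$-linear isomorphism $\Ker p \cong \mathcal{O}_{\mathbb{P}^1}(-1)^{\oplus(n+1)}$. The inclusion is then represented by $\varphi = X_0 M_0 + X_1 M_1$ for a unique pair $(M_0,M_1)$, and $\disc(M) = 0$ cuts out $S$ by construction. To force $M_i$ to be symmetric, note that $\lambda$ together with Lemma \ref{Lemma:Duality:Ext} gives a second presentation of $\iota_{*}\mathcal{F}$ with kernel map ${}^{t}\varphi$; the rigidification by $s$ on both sides forces these two presentations to coincide once the isomorphism $\Ker p \cong \mathcal{O}_{\mathbb{P}^1}(-1)^{\oplus(n+1)}$ is pinned down by the pairing induced by $\lambda$, giving $\varphi = {}^{t}\varphi$. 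Checking that the two constructions are mutually inverse, and that the equivalence of triples corresponds exactly to the freedom in choosing $\Ker p \cong \mathcal{O}_{\mathbb{P}^1}(-1)^{\oplus(n+1)}$, completes (1). For (2), the map $M \mapsto M \cdot P$ corresponds on resolutions to the change of basis $P$ on $\mathcal{O}_{\mathbb{P}^1}(-1)^{\oplus(n+1)}$ and ${}^{t}P^{-1}$ on $\mathcal{O}_{\mathbb{P}^1}^{\oplus(n+1)}$, since this yields $\varphi_{M\cdot P} = {}^{t}P \varphi_M P$; the induced transformation on the basis of $H^0(S,\mathcal{F})$ is exactly $s \mapsto s\,{}^{t}P^{-1}$.

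The main obstacle I expect is verifying that the isomorphism $\lambda$ produced in the forward direction is \emph{symmetric} in the sense of Definition \ref{Definition:TransposeSymmetric}. This requires unwinding the identification of $\Extsheaf^1_{\mathbb{P}^1}(\iota_{*}\mathcal{F},\mathcal{O}_{\mathbb{P}^1}(-1))$ with $\iota_{*}\mathcal{F}^{\vee}$ through Grothendieck duality (Lemma \ref{Lemma:Duality:Ext}), composing with the transposition operation on the resolution, and checking that the resulting morphism agrees with its bi-dual composed with $\mathrm{can}$; the symmetry of the matrices $M_i$ in $k$ must translate into the sheaf-theoretic symmetry condition under this chain of identifications. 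The corresponding check in the backward direction, namely that a symmetric $\lambda$ forces the kernel map to satisfy $\varphi = {}^{t}\varphi$ once the self-pairing on $\mathcal{O}_{\mathbb{P}^1}(-1)^{\oplus(n+1)}$ is fixed compatibly with that on $\mathcal{O}_{\mathbb{P}^1}^{\oplus(n+1)}$, is dual to this and should follow from the same diagram chase.
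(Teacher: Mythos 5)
Your proposal matches the paper's argument essentially step for step: form the resolution $\O_{\P^1}(-1)^{\oplus(n+1)} \xrightarrow{M} \O_{\P^1}^{\oplus(n+1)}$, apply $\Homsheaf_{\P^1}(-,\O_{\P^1}(-1))$ to obtain the dual sequence with $\transp{M}$, invoke Lemma \ref{Lemma:Duality:Ext} together with the fixed rigidification $c$, and deduce $\lambda$ from the symmetry $M = \transp{M}$, with the backward direction using Lemma \ref{Lemma:Resolution} and the uniquely determined isomorphism $q$. The symmetry verification you flag as the main obstacle is settled in the paper precisely by the diagram chase you sketch, namely applying $\Homsheaf_{\P^1}(-,\O_{\P^1}(-1))$ a second time and reading off $\lambda = \transp{\lambda}\circ\mathrm{can}$ from the resulting three-row commutative diagram.
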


\begin{rem}
\label{Remark:Proposition:Key}
The bijection in Proposition \ref{Proposition:Key} (1)
depends on the choice of $c$.
In order to make the statement independent of such choices,
we need to replace $\mathcal{F}^{\vee}$
by $\Homsheaf_{S}\big( \mathcal{F},\,(\iota^{!}\O_{\P^1}(-1))[1] \big)$.
When $k$ is algebraically closed of characteristic different from two,
it is easy to see that
the bijections in our classification theorems for $\GL_{n+1}(k)$-orbits
(Theorem \ref{Theorem:MainTheorem} and Theorem \ref{Theorem:MainTheorem:NonSymmetric})
are independent of the choice of $c$.
\end{rem}

\begin{proof}
(1) \ For a pair $M = (M_0,M_1) \in (k^2 \otimes \Sym_2 k^{n+1})_S$,
multiplying the matrix $X_0 M_0 + X_1 M_1$ on column vectors,
we get a morphism
\[ M \colon \O_{\P^1}(-1)^{\oplus (n+1)} \longrightarrow \O_{\P^1}^{\oplus (n+1)} \]
of $\O_{\P^1}$-modules.
We define a coherent $\O_{\P^1}$-module $\mathcal{F}_0$
by the short exact sequence
\begin{equation}
\label{Proposition:Key:ShortExactSequence1}
\begin{CD}
0 @>>> \O_{\P^1}(-1)^{\oplus (n+1)}
  @>{M}>> \O_{\P^1}^{\oplus (n+1)}
  @>>> \mathcal{F}_0 @>>> 0.
\end{CD}
\end{equation}
For a section $\alpha$ of $\O_{\P^1}^{\oplus (n+1)}$,
we have $(\det M) \alpha = M \mathrm{adj}(M) \alpha$,
where $\mathrm{adj}(M)$ (resp.\ $\det M$) is
the adjugate matrix of $X_0 M_0 + X_1 M_1$ (resp.\ the determinant $\det(X_0 M_0 + X_1 M_1)$).
We see that $\mathcal{F}_0$ comes from an $\O_S$-module.
We put $\mathcal{F} := \iota^{\ast} \mathcal{F}_0$.
Then we have $\mathcal{F}_0 = \iota_{\ast} \mathcal{F}$.
For each $x \in S$, the length of ${\mathcal{F}}_{x}$ as an $\O_{S,x}$-module
is equal to the order of vanishing of the polynomial
$\det(X_0 M_0 + X_1 M_1)$ at $x$.
Hence we have $\length_{\O_{S,x}} \O_{S,x} = \length_{\O_{S,x}} {\mathcal{F}}_{x}$.
Since $H^0(\P^1,\O_{\P^1}(-1)) = H^0(\P^1,\O_{\P^1}(-1)) = 0$, we have an isomorphism
\[ H^0(\P^1,\O_{\P^1}^{\oplus (n+1)}) = k^{\oplus (n+1)} \overset{\sim}{\longrightarrow} H^0(S,\mathcal{F}) \]
of $k$-vector spaces.
We denote the image of the standard $k$-basis by $s = \{ s_0,s_1,\ldots,s_n \}$.
Applying the left derived functor of
$\mathcal{G} \mapsto \Homsheaf_{\P^1}(\mathcal{G},\O_{\P^1}(-1))$ to
(\ref{Proposition:Key:ShortExactSequence1}),
we have
\begin{equation}
\label{Proposition:Key:ShortExactSequence2}
\begin{CD}
0 @>>> \O_{\P^1}(-1)^{\oplus (n+1)}
  @>{\transp{M}}>> \O_{\P^1}^{\oplus (n+1)}
  @>>> \Extsheaf^1_{\P^1}(\iota_{\ast} \mathcal{F},\,\O_{\P^1}(-1))
 @>>> 0
\end{CD}
\end{equation}
because
$\Homsheaf_{\P^1}(\iota_{\ast} \mathcal{F},\O_{\P^1}(-1))
= \Extsheaf^1_{\P^1} \big( \O_{\P^1}^{\oplus (n+1)},\,\O_{\P^1}(-1) \big) = 0$
(\cite[Chapter III, Proposition 6.3, Proposition 6.7]{Hartshorne:AlgebraicGeometry}).
By Lemma \ref{Lemma:Duality:Ext} and using the isomorphism $c$, we have
\[ \Extsheaf^1_{\P^1}( \iota_{\ast} \mathcal{F},\, \O_{\P^1}(-1))
\cong \iota_{\ast} \Homsheaf_{S}\big( \mathcal{F},\, (\iota^{!} \O_{\P^1}(-1))[1] \big)
\overset{c}{\cong} \iota_{\ast}(\mathcal{F}^{\vee}). \]
Since $M$ is a pair of symmetric matrices, we identify
(\ref{Proposition:Key:ShortExactSequence1}), (\ref{Proposition:Key:ShortExactSequence2}),
and get an isomorphism
$\mathcal{F}_0 = \iota_{\ast} \mathcal{F} \cong \iota_{\ast}(\mathcal{F}^{\vee})$.
We denote this isomorphism by $\iota_{\ast}(\lambda)$ for a unique isomorphism
$\lambda \colon \mathcal{F} \overset{\sim}{\longrightarrow} \mathcal{F}^{\vee}$
of $\O_S$-modules. Applying the left derived functor of
$\mathcal{G} \mapsto \Homsheaf_{\P^1}(\mathcal{G},\O_{\P^1}(-1))$
to (\ref{Proposition:Key:ShortExactSequence2}),
we get
\begin{equation}
\label{Proposition:Key:ShortExactSequence3}
\begin{CD}
0 @>>> \O_{\P^1}(-1)^{\oplus (n+1)}
  @>{\transp(\transp{M}) = M}>> \O_{\P^1}^{\oplus (n+1)}
  @>>> \Extsheaf^1_{\P^1}(\iota_{\ast} (\mathcal{F}^{\vee}),\,\O_{\P^1}(-1))
 @>>> 0.
\end{CD}
\end{equation}
Summarizing
(\ref{Proposition:Key:ShortExactSequence1}), (\ref{Proposition:Key:ShortExactSequence2}),
(\ref{Proposition:Key:ShortExactSequence3}),
we have the following commutative diagram.
\[
\begin{CD}
0 @>>> \O_{\P^1}(-1)^{\oplus (n+1)}
  @>{M}>> \O_{\P^1}^{\oplus (n+1)}
  @>>> \mathcal{F}_0 = \iota_{\ast} \mathcal{F} @>>> 0 \\
 @. @VV{\mathrm{id}}V @VV{\mathrm{id}}V @VV{\iota_{\ast}(\lambda)}V \\ 
0 @>>> \O_{\P^1}(-1)^{\oplus (n+1)}
  @>{\transp{M}}>> \O_{\P^1}^{\oplus (n+1)}
  @>>> \Extsheaf^1_{\P^1}(\iota_{\ast} \mathcal{F},\,\O_{\P^1}(-1)) \cong \iota_{\ast} (\mathcal{F}^{\vee})
 @>>> 0 \\
 @. @AA{\mathrm{id}}A @AA{\mathrm{id}}A @AA{\iota_{\ast}(\transp{\lambda} \circ \mathrm{can})}A \\ 
0 @>>> \O_{\P^1}(-1)^{\oplus (n+1)}
  @>{\transp(\transp{M}) = M}>> \O_{\P^1}^{\oplus (n+1)}
  @>>> \Extsheaf^1_{\P^1}(\iota_{\ast} (\mathcal{F}^{\vee}),\,\O_{\P^1}(-1)) \cong \iota_{\ast} \mathcal{F}
 @>>> 0
\end{CD}
\]
By the commutativity of this diagram,
we see that $\lambda = \transp{\lambda} \circ \mathrm{can}$.
Hence $\lambda$ is symmetric.

Conversely, for a triple $(\mathcal{F},\lambda,s)$, there is a unique surjection
$p \colon \O_{\P^1}^{\oplus (n+1)} \longrightarrow \iota_{\ast} \mathcal{F}$
which sends the standard $k$-basis of  $H^0(\P^1,\O_{\P^1}^{\oplus (n+1)})$
to the ordered $k$-basis $s$ of $H^0(S,\mathcal{F})$.
The kernel $\Ker p$ is isomorphic to $\O_{\P^1}(-1)^{\oplus (n+1)}$
by Lemma \ref{Lemma:Resolution}.
Applying the left derived functor of
$\mathcal{G} \mapsto \Homsheaf_{\P^1}(\mathcal{G},\O_{\P^1}(-1))$ to
\begin{equation}
\label{Proposition:Key:ShortExactSequence4}
\begin{CD}
0 @>>> \Ker p
  @>>> \O_{\P^1}^{\oplus (n+1)}
  @>{p}>> \iota_{\ast} \mathcal{F} @>>> 0,
\end{CD}
\end{equation}
we get the following short exact sequence
\begin{equation}
\label{Proposition:Key:ShortExactSequence5}
\begin{CD}
0 @>>> \O_{\P^1}(-1)^{\oplus (n+1)}
  @>>> \Homsheaf_{\P^1}(\Ker p,\O_{\P^1}(-1))
  @>>> \iota_{\ast}(\mathcal{F}^{\vee})
 @>>> 0.
\end{CD}
\end{equation}
We combine (\ref{Proposition:Key:ShortExactSequence4}), (\ref{Proposition:Key:ShortExactSequence5})
using $\lambda$.
We get the following commutative diagram
\begin{equation}
\label{Proposition:Key:CommutativeDiagram1}
\begin{CD}
0 @>>> \Ker p
  @>>> \O_{\P^1}^{\oplus (n+1)}
  @>{p}>> \iota_{\ast} \mathcal{F} @>>> 0 \\
& & @VV{\cong, q}V @VV_{\cong}V @VV{\iota_{\ast}(\lambda)}V \\
0 @>>> \O_{\P^1}(-1)^{\oplus (n+1)}
  @>>> \Homsheaf_{\P^1}(\Ker p,\O_{\P^1}(-1))
  @>>> \iota_{\ast}(\mathcal{F}^{\vee})
 @>>> 0.
\end{CD}
\end{equation}
The isomorphism $q$ in (\ref{Proposition:Key:CommutativeDiagram1})
is {\it uniquely determined} by the commutativity of (\ref{Proposition:Key:CommutativeDiagram1}).
Using $q$, the short exact sequence (\ref{Proposition:Key:ShortExactSequence4}) becomes
\begin{equation}
\label{Proposition:Key:ShortExactSequence6}
\begin{CD}
0 @>>> \O_{\P^1}(-1)^{\oplus (n+1)}
  @>>> \O_{\P^1}^{\oplus (n+1)}
  @>{p}>> \iota_{\ast} \mathcal{F} @>>> 0.
\end{CD}
\end{equation}
The morphism 
$\O_{\P^1}(-1)^{\oplus (n+1)} \longrightarrow \O_{\P^1}^{\oplus (n+1)}$
in (\ref{Proposition:Key:ShortExactSequence6})
gives a pair $M$ of square matrices of size $n+1$.
It is straightforward to see that $M$ is a pair of symmetric matrices because
$\lambda$ is symmetric.
It is easy to see that,
if two triples $(\mathcal{F},\lambda,s)$, $(\mathcal{F}',\lambda',s')$
are equivalent, the symmetric matrices obtained from them are the same.

The two maps
$M \mapsto (\mathcal{F},\lambda,s)$ and $(\mathcal{F},\lambda,s) \mapsto M$
constructed above give a desired bijection.

\vspace{0.1in}

\noindent
(2) \ This follows from the commutativity of the following diagram
\[
\begin{CD}
0 @>>> \O_{\P^1}(-1)^{\oplus (n+1)}
  @>{M}>> \O_{\P^1}^{\oplus (n+1)}
  @>{p}>> \iota_{\ast} \mathcal{F} @>>> 0 \\
& & @AA{\cong, P}A @AA{\cong, \transp{P}^{-1}}A @AA{\mathrm{id}}A \\
0 @>>> \O_{\P^1}(-1)^{\oplus (n+1)}
  @>{M \cdot P}>> \O_{\P^1}^{\oplus (n+1)} @>{p'}>> \iota_{\ast} \mathcal{F} @>>> 0.
\end{CD}
\]
Here $p'$ sends the standard $k$-basis of
$H^0(\P^1,\O_{\P^1}^{\oplus (n+1)})$
to the ordered $k$-basis $s\,\transp{P}^{-1}$ of $H^0(S,\mathcal{F})$.
\end{proof}

It is straightforward to deduce Theorem \ref{Theorem:MainTheorem} from
Proposition \ref{Proposition:Key}.

\begin{proof}[{\bfseries\itshape Proof of Theorem \ref{Theorem:MainTheorem}}]
For pairs $M,M' \in (k^2 \otimes \Sym_2 k^{n+1})_S$,
let $(\mathcal{F},\lambda,s)$ (resp.\ $(\mathcal{F}',\lambda',s')$)
be a triple corresponding to $M$ (resp.\ $M'$)
by Proposition \ref{Proposition:Key}.
We shall show that
$M,M'$ are in the same $\GL_{n+1}(k)$-orbit if and only if
$(\mathcal{F},\lambda)$, $(\mathcal{F}',\lambda')$ are equivalent.

Assume that $M,M'$ are in the same $\GL_{n+1}(k)$-orbit.
We take $P \in \GL_{n+1}(k)$ with $M' = M \cdot P$.
By Proposition \ref{Proposition:Key},
there is $\rho \colon \mathcal{F} \overset{\sim}{\longrightarrow} \mathcal{F}'$
with $\lambda = (\transp{\rho}) \circ \lambda' \circ \rho$
and $\rho(s\,\transp{P}^{-1})  = s'$.
Hence $(\mathcal{F},\lambda)$, $(\mathcal{F}',\lambda')$ are equivalent.

Conversely, assume that $(\mathcal{F},\lambda)$, $(\mathcal{F}',\lambda')$
are equivalent. There is
$\rho \colon \mathcal{F} \overset{\sim}{\longrightarrow} \mathcal{F}'$
with $\lambda = (\transp{\rho}) \circ \lambda' \circ \rho$.
Since $\rho(s),s'$ are ordered $k$-bases of $H^0(S,\mathcal{F}')$,
there is $P \in \GL_{n+1}(k)$ with
$\rho(s\,\transp{P}^{-1})  = s'$.
Then $(\mathcal{F},\lambda,s\,\transp{P}^{-1})$, $(\mathcal{F}',\lambda',s')$
are equivalent, and we have $M' = M \cdot P$
by Proposition \ref{Proposition:Key}.
\end{proof}

\begin{cor}
\label{Corollary:NonEmpty}
The set $(k^2 \otimes \Sym_2 k^{n+1})_S$ is non-empty.
(In other words, there is a pair of symmetric matrices whose
discriminant subscheme is $S$.)
\end{cor}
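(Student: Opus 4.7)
The plan is to invoke Theorem \ref{Theorem:MainTheorem} and simply exhibit an equivalence class of pairs $(\mathcal{F},\lambda)$ satisfying its hypotheses; any such pair produces a $\GL_{n+1}(k)$-orbit inside $(k^2 \otimes \Sym_2 k^{n+1})_S$, and in particular shows that this set is non-empty.

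The concrete choice I would make is $\mathcal{F} = \O_S$. This trivially satisfies the length condition $\length_{\O_{S,x}}\O_{S,x} = \length_{\O_{S,x}} \mathcal{F}_x$ at every $x \in S$. For the symmetric isomorphism $\lambda \colon \O_S \overset{\sim}{\longrightarrow} \O_S^{\vee} = \Homsheaf_S(\O_S,\O_S)$, I would take the canonical identification $a \mapsto (b \mapsto ab)$; by Lemma \ref{Lemma:RankOneSymmetric}, any $\O_S$-linear map out of a free $\O_S$-module of rank one into its $\O_S$-linear dual is automatically symmetric, so no verification is required. (Alternatively, one can appeal directly to Lemma \ref{Lemma:Duality:SelfDual} to produce $\lambda$.)

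Since an equivalence class of pairs $(\mathcal{F},\lambda)$ with the required properties exists, Theorem \ref{Theorem:MainTheorem} yields a corresponding $\GL_{n+1}(k)$-orbit inside $(k^2 \otimes \Sym_2 k^{n+1})_S$, whence the set itself is non-empty. Unwinding Proposition \ref{Proposition:Key} along the pair $(\O_S,\mathrm{id})$ together with any ordered $k$-basis of $H^0(S,\O_S)$, one can even write down the resulting pair of symmetric matrices explicitly from the resolution $0 \to \O_{\P^1}(-1)^{\oplus(n+1)} \to \O_{\P^1}^{\oplus(n+1)} \to \iota_\ast \O_S \to 0$.

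There is no real obstacle here; the whole content of the corollary has already been packaged into Theorem \ref{Theorem:MainTheorem} and Lemma \ref{Lemma:RankOneSymmetric}. The only very mild point to check is that the trivial pair $(\O_S,\mathrm{id})$ genuinely lies on the sheaf-theoretic side of the bijection, which is immediate from the definitions.
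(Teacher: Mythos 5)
Your proof is correct and takes essentially the same approach as the paper: choose a free rank-one $\O_S$-module with a symmetric self-duality isomorphism and invoke Theorem \ref{Theorem:MainTheorem}. The only difference is cosmetic — you fix $\mathcal{F}=\O_S$ with the canonical identification, while the paper cites Lemma \ref{Lemma:Duality:SelfDual} for an arbitrary free rank-one $\mathcal{F}$ — but the argument is the same.
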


\begin{proof}
Let $\mathcal{F}$ be a free $\O_S$-module of rank one, and take a symmetric isomorphism
$\lambda \colon \mathcal{F} \overset{\sim}{\longrightarrow} \mathcal{F}^{\vee}$
by Lemma \ref{Lemma:Duality:SelfDual}.
By Theorem \ref{Theorem:MainTheorem},
there is a $\GL_{n+1}(k)$-orbit in $(k^2 \otimes \Sym_2 k^{n+1})_S$
corresponding to $(\mathcal{F},\lambda)$.
\end{proof}

\begin{rem}
\label{Remark:NonEmpty}
The above proof shows that, for any coherent $\O_S$-module $\mathcal{F}$ with
$\length_{\O_{S,x}} \O_{S,x} = \length_{\O_{S,x}} {\mathcal{F}}_{x}$
for each $x \in S$,
there is a pair $(\mathcal{F},\lambda)$ corresponding to
a $\GL_{n+1}(k)$-orbit of $(k^2 \otimes \Sym_2 k^{n+1})_S$.
It is also possible to prove Corollary \ref{Corollary:NonEmpty} by a direct calculation
(see Lemma \ref{Appendix:Lemma:LinearAlgebra2}).
\end{rem}

\begin{rem}
By Corollary \ref{Corollary:NonEmpty},
for any non-zero homogeneous polynomial $f(X_0,X_1) \in k[X_0,X_1]$ of degree $n+1$,
we can write
$\det(X_0 M_0 + X_1 M_1) = \lambda f(X_0,X_1)$
for a pair $(M_0,M_1)$ of symmetric matrices of size $n+1$
with entries in $k$ and $\lambda \in k^{\times}$.
It is an interesting problem in Arithmetic Invariant Theory to determine
when we can take $\lambda = 1$.
When $n$ is odd or $k$ is algebraically closed,
replacing $M$ by $M \cdot P$ for some $P \in \GL_{n+1}(k)$,
we can always take $\lambda = 1$.
However, it turns out that,
when $n$ is even and $k$ is not algebraically closed,
we cannot take $\lambda = 1$ in general.
This problem is studied in Section \ref{Section:WoodBhargavaGrossWang}.
\end{rem}

When $k$ is algebraically closed of characteristic different from two,
we prove the following corollary,
which is a simplified version of Theorem \ref{Theorem:MainTheorem}.
Results essentially equivalent to it have already appeared many times in the literature
(e.g.\ \cite[Chapter XIII]{HodgePedoe}).

\begin{cor}
\label{Corollary:MainTheorem:AlgebraicallyClosed}
Assume that $k$ is algebraically closed of characteristic different from two.
There is a bijection between the following sets.
\begin{itemize}
\item The set of $\GL_{n+1}(k)$-orbits in $(k^2 \otimes \Sym_2 k^{n+1})_S$.
\item The set of isomorphism classes of coherent $\O_S$-modules $\mathcal{F}$
with $\length_{\O_{S,x}} \O_{S,x} = \length_{\O_{S,x}} {\mathcal{F}}_{x}$
for each $x \in S$.
\end{itemize}
\end{cor}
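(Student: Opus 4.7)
The plan is to deduce this corollary directly from Theorem \ref{Theorem:MainTheorem} by showing that, under the stated hypotheses on $k$, the forgetful map $(\mathcal{F},\lambda) \mapsto \mathcal{F}$ descends to a bijection from equivalence classes of pairs to isomorphism classes of coherent $\O_S$-modules satisfying the length condition. Granting this, the desired bijection is obtained by composing with the bijection of Theorem \ref{Theorem:MainTheorem}.

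For surjectivity of the forgetful map, I would invoke Lemma \ref{Lemma:Duality:SelfDual}: any coherent $\O_S$-module $\mathcal{F}$ satisfying the length condition admits a symmetric isomorphism $\lambda \colon \mathcal{F} \xrightarrow{\sim} \mathcal{F}^{\vee}$, so $[\mathcal{F}]$ lifts to the equivalence class of $(\mathcal{F},\lambda)$. Note that the length condition is obviously preserved under $\O_S$-module isomorphism, so this step is well-posed.

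For injectivity, suppose $(\mathcal{F},\lambda)$ and $(\mathcal{F}',\lambda')$ have isomorphic underlying sheaves, via some $\sigma \colon \mathcal{F} \xrightarrow{\sim} \mathcal{F}'$. I would first transport $\lambda'$ back to $\mathcal{F}$: the composite $\lambda'' := (\transp{\sigma}) \circ \lambda' \circ \sigma \colon \mathcal{F} \xrightarrow{\sim} \mathcal{F}^{\vee}$ is a symmetric isomorphism (this is formal — the pair $(\mathcal{F},\lambda'')$ is equivalent to $(\mathcal{F}',\lambda')$ via $\sigma$, and symmetry is clearly preserved under equivalence). Thus $\lambda$ and $\lambda''$ are two symmetric isomorphisms $\mathcal{F} \xrightarrow{\sim} \Homsheaf_{S}(\mathcal{F},\O_S)$. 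Here is where the hypotheses on $k$ enter: Lemma \ref{Lemma:ClassificationQuadraticForm}, applied with $\mathcal{G} = \O_S$, provides an automorphism $\tau \colon \mathcal{F} \xrightarrow{\sim} \mathcal{F}$ with $\lambda = (\transp{\tau}) \circ \lambda'' \circ \tau$. Setting $\rho := \sigma \circ \tau$ then gives $\lambda = (\transp{\rho}) \circ \lambda' \circ \rho$, witnessing the equivalence of $(\mathcal{F},\lambda)$ and $(\mathcal{F}',\lambda')$.

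There is really no substantive obstacle here — the two lemmas \ref{Lemma:Duality:SelfDual} and \ref{Lemma:ClassificationQuadraticForm} do all the work, and the corollary amounts to observing that, over an algebraically closed field of characteristic $\neq 2$, the symmetric bilinear form datum $\lambda$ on $\mathcal{F}$ is both \emph{existent} and \emph{unique up to equivalence}, so it can be discarded from the classification. The only mildly delicate point is the formal verification that transport of $\lambda'$ along $\sigma$ preserves symmetry, which follows immediately from functoriality of the transpose and the canonical map in Definition \ref{Definition:TransposeSymmetric}.
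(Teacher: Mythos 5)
Your proof is correct and follows essentially the same route as the paper: the paper's (terser) proof likewise invokes Lemma \ref{Lemma:Duality:SelfDual} for existence and Lemma \ref{Lemma:ClassificationQuadraticForm} for uniqueness of the equivalence class $(\mathcal{F},\lambda)$, then appeals to Theorem \ref{Theorem:MainTheorem}. You have merely spelled out the transport argument and the naturality check for symmetry that the paper leaves implicit.
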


\begin{proof}
By Lemma \ref{Lemma:Duality:SelfDual} and Lemma \ref{Lemma:ClassificationQuadraticForm},
for any $\mathcal{F}$ satisfying
the conditions of this corollary,
there is a unique equivalence class of pairs of the form $(\mathcal{F},\lambda)$.
Hence the assertion follows from Theorem \ref{Theorem:MainTheorem}.
\end{proof}

It is straightforward to deduce the following bijection
for coherent $\O_{\P^1}$-modules with zero-dimensional supports
from Corollary \ref{Corollary:MainTheorem:AlgebraicallyClosed}.

\begin{cor}
Assume that $k$ is algebraically closed of characteristic different from two.
There is a bijection between the following sets.
\begin{itemize}
\item The set of $\GL_{n+1}(k)$-orbits in $k^2 \otimes \Sym_2 k^{n+1}$
whose discriminant polynomials are non-zero.
\item The set of isomorphism classes of coherent $\O_{\P^1}$-modules $\mathcal{G}$
with $\dim_k H^0(\P^1,\mathcal{G}) = n+1$ and
$\dim \Supp \mathcal{G} = 0$.
\end{itemize}
\end{cor}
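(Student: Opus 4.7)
The plan is to stratify both sides of the claimed bijection by the discriminant subscheme $S \subset \P^1$ (on the left) and by an analogously defined closed subscheme on the right, then apply Corollary \ref{Corollary:MainTheorem:AlgebraicallyClosed} stratum by stratum.

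First I would observe that every $M \in k^2 \otimes \Sym_2 k^{n+1}$ with $\disc(M) \neq 0$ has a well-defined discriminant subscheme $S_M \subset \P^1$ with $\dim_k H^0(S_M, \O_{S_M}) = n+1$, and this subscheme is invariant under the action of $\GL_{n+1}(k)$ (since $\disc(M \cdot P) = (\det P)^2 \disc(M)$). Thus the set of $\GL_{n+1}(k)$-orbits with non-zero discriminant polynomial decomposes as the disjoint union, over all finite closed subschemes $S \hookrightarrow \P^1$ with $\dim_k H^0(S, \O_S) = n+1$, of $\GL_{n+1}(k)$-orbits in $(k^2 \otimes \Sym_2 k^{n+1})_S$. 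By Corollary \ref{Corollary:MainTheorem:AlgebraicallyClosed}, each stratum is in bijection with isomorphism classes of coherent $\O_S$-modules $\mathcal{F}$ satisfying $\length_{\O_{S,x}} \O_{S,x} = \length_{\O_{S,x}} \mathcal{F}_x$ for all $x \in S$.

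Next I would set up the matching stratification on the right-hand side. Given a coherent $\O_{\P^1}$-module $\mathcal{G}$ with zero-dimensional support and $\dim_k H^0(\P^1, \mathcal{G}) = n+1$, at each $x \in \Supp \mathcal{G}$ the stalk $\mathcal{G}_x$ is a finite length $\O_{\P^1,x}$-module; since $\O_{\P^1,x}$ is a discrete valuation ring, I let $\ell_x := \length_{\O_{\P^1,x}} \mathcal{G}_x$ and define $S_{\mathcal{G}} \subset \P^1$ to be the closed subscheme whose ideal sheaf is, locally at $x$, the $\ell_x$-th power of the maximal ideal. Then $\mathcal{G}$ is naturally an $\O_{S_{\mathcal{G}}}$-module $\mathcal{F}$, we have $\length_{\O_{S_{\mathcal{G}}, x}} \O_{S_{\mathcal{G}}, x} = \ell_x = \length_{\O_{S_{\mathcal{G}}, x}} \mathcal{F}_x$, and $\dim_k H^0(S_{\mathcal{G}}, \O_{S_{\mathcal{G}}}) = \sum_x \ell_x [k(x):k] = \dim_k H^0(\P^1, \mathcal{G}) = n+1$. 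Conversely, if $\mathcal{F}$ is a coherent $\O_S$-module satisfying the length condition for some finite $S \hookrightarrow \P^1$, then $\iota_{\ast} \mathcal{F}$ has zero-dimensional support and $\dim_k H^0(\P^1, \iota_* \mathcal{F}) = n+1$.

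Finally I would check that these constructions are mutually inverse. The composition $(\text{orbit with disc } S) \mapsto \mathcal{F} \mapsto \iota_* \mathcal{F} =: \mathcal{G} \mapsto S_{\mathcal{G}}$ recovers $S$ because the length condition $\length_{\O_{S,x}} \O_{S,x} = \length_{\O_{S,x}} \mathcal{F}_x$ forces $\ell_x = \length_{\O_{S,x}} \O_{S,x}$, which determines the same $\O_{S,x}$ inside the DVR $\O_{\P^1,x}$. Hence the two stratifications match point by point, and assembling the stratum-wise bijections from Corollary \ref{Corollary:MainTheorem:AlgebraicallyClosed} gives the desired global bijection.

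The main conceptual point, rather than a serious obstacle, is the identification of the canonical $S$ attached to $\mathcal{G}$; this is clean because $\P^1$ is regular one-dimensional, so stalks are DVRs and every finite length module is supported on an unambiguous thickening of its scheme-theoretic support. Once this is in hand, no additional work beyond invoking Corollary \ref{Corollary:MainTheorem:AlgebraicallyClosed} is required.
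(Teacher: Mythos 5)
Your proposal is correct and follows essentially the same route as the paper's proof: both decompose by the discriminant subscheme $S$, invoke Corollary \ref{Corollary:MainTheorem:AlgebraicallyClosed} on each stratum, and identify $\mathcal{G}$ with $\iota_*\mathcal{F}$ for a canonically determined thickened $S$ (the paper states the unique $S$ with $\length_{\O_{\P^1,x}} \O_{S,x} = \length_{\O_{\P^1,x}} \mathcal{G}_x$ for each $x$, which is precisely your $S_{\mathcal{G}}$). You supply slightly more detail on why $\mathcal{G}$ is annihilated by the $\ell_x$-th power of the maximal ideal (a finite-length module over a DVR of length $\ell$ is killed by $\mathfrak{m}^\ell$) and on why the stratifications match; these are correct and fill in steps the paper leaves implicit.
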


\begin{proof}
For a $\GL_{n+1}(k)$-orbit of $M \in k^2 \otimes \Sym_2 k^{n+1}$ with $\disc(M) \neq 0$,
let $\iota \colon S := D_M \hookrightarrow \P^1$
be the discriminant subscheme.
We take the coherent $\O_S$-module $\mathcal{F}$
corresponding to $M$ by Corollary \ref{Corollary:MainTheorem:AlgebraicallyClosed}.
Then, we put $\mathcal{G} := \iota_{\ast} \mathcal{F}$.
Conversely,
for a coherent $\O_{\P^1}$-module $\mathcal{G}$ with
$\dim_k H^0(\P^1,\mathcal{G}) = n+1$ and $\dim \Supp \mathcal{G} = 0$,
there is a unique closed subscheme $\iota \colon S \hookrightarrow \P^1$
with $\length_{\O_{\P^1,x}} \O_{S,x} = \length_{\O_{\P^1,x}} {\mathcal{G}}_{x}$
for each $x \in S$.
We put $\mathcal{F} := \iota^{\ast} \mathcal{G}$.
Then, we have $\mathcal{G} = \iota_{\ast} \mathcal{F}$.
We take a $\GL_{n+1}(k)$-orbit in $k^2 \otimes \Sym_2 k^{n+1}$
corresponding to $\mathcal{F}$
by Corollary \ref{Corollary:MainTheorem:AlgebraicallyClosed}.
\end{proof}

We shall prove a variant of Theorem \ref{Theorem:MainTheorem} for
$\SL_{n+1}(k)$-orbits.
In order to keep track of the determinants,
we need to consider equivalence classes of triples $(\mathcal{F},\lambda,v)$
rather than pairs $(\mathcal{F},\lambda)$.
Since we have already proved the rigidified bijection,
the proof of the following theorem is straightforward.

\begin{thm}
\label{Theorem:MainTheoremSL(n+1)}
There is a bijection between the following sets.
\begin{itemize}
\item The set of $\SL_{n+1}(k)$-orbits in $(k^2 \otimes \Sym_2 k^{n+1})_S$.
\item The set of equivalence classes of triples $(\mathcal{F},\lambda,v)$, where
\begin{itemize}
\item $\mathcal{F}$ is a coherent $\O_S$-module with
$\length_{\O_{S,x}} \O_{S,x} = \length_{\O_{S,x}} {\mathcal{F}}_{x}$
for each $x \in S$,
\item $\lambda \colon \mathcal{F} \overset{\sim}{\longrightarrow} \mathcal{F}^{\vee}$
is a symmetric isomorphism of $\O_{S}$-modules, and
\item $v \in \bigwedge^{n+1} H^0(S,\mathcal{F})$ is a $k$-basis.
(By the first condition, the coherent $\O_{S}$-modules $\mathcal{F}, \O_S$
have the same composition factors up to permutation.
Hence we have $\dim_k H^0(S,\mathcal{F}) = \dim_k H^0(S,\O_S) = n+1$,
and the wedge product $\bigwedge^{n+1} H^0(S,\mathcal{F})$
is a one-dimensional $k$-vector space.)
\end{itemize}
Here two triples $(\mathcal{F},\lambda,v),(\mathcal{F}',\lambda',v')$
are equivalent if there is an isomorphism
$\rho \colon \mathcal{F} \overset{\sim}{\longrightarrow} \mathcal{F}'$
of $\O_{S}$-modules satisfying
$\lambda = (\transp{\rho}) \circ \lambda' \circ \rho$ and
$\big( \bigwedge^{n+1} H^0(\rho) \big)(v) = v'$.
\end{itemize}
\end{thm}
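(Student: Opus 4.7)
The plan is to derive Theorem \ref{Theorem:MainTheoremSL(n+1)} from the rigidified bijection of Proposition \ref{Proposition:Key} by passing to the quotient by the subgroup $\SL_{n+1}(k) \subset \GL_{n+1}(k)$. The key observation is that, under the action described in Proposition \ref{Proposition:Key}(2), an element $P \in \GL_{n+1}(k)$ sends an ordered $k$-basis $s = \{s_0,\ldots,s_n\}$ of $H^0(S,\mathcal{F})$ to $s\,\transp{P}^{-1}$, and therefore acts on the one-dimensional $k$-vector space $\bigwedge^{n+1} H^0(S,\mathcal{F})$ by the scalar $\det(\transp{P}^{-1}) = (\det P)^{-1}$. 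Hence $\SL_{n+1}(k)$ is precisely the subgroup of $\GL_{n+1}(k)$ that stabilizes the top wedge $v := s_0 \wedge \cdots \wedge s_n$, and passing to $\SL_{n+1}(k)$-orbits corresponds exactly to replacing the datum of an ordered $k$-basis $s$ by the weaker datum of the top wedge $v$.

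Concretely, I would first define the forward map: given $M \in (k^2 \otimes \Sym_2 k^{n+1})_S$, take a triple $(\mathcal{F},\lambda,s)$ corresponding to $M$ by Proposition \ref{Proposition:Key}(1) and set $v := s_0 \wedge \cdots \wedge s_n$. The equivalence class of $(\mathcal{F},\lambda,v)$ is well-defined: any other representative $(\mathcal{F}',\lambda',s')$ of the same equivalence class of triples in Proposition \ref{Proposition:Key} satisfies $\rho(s) = s'$ for some intertwining isomorphism $\rho$, so $\bigl(\bigwedge^{n+1} H^0(\rho)\bigr)(v) = v'$ automatically. Moreover, replacing $M$ by $M \cdot P$ for $P \in \SL_{n+1}(k)$ changes $s$ to $s\,\transp{P}^{-1}$ by Proposition \ref{Proposition:Key}(2), and by the wedge-product computation above this leaves $v$ unchanged. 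Hence the equivalence class of $(\mathcal{F},\lambda,v)$ depends only on the $\SL_{n+1}(k)$-orbit of $M$.

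For the inverse map, given a triple $(\mathcal{F},\lambda,v)$, choose any ordered $k$-basis $s$ of $H^0(S,\mathcal{F})$ with $s_0 \wedge \cdots \wedge s_n = v$, and let $M$ be the pair of symmetric matrices associated to $(\mathcal{F},\lambda,s)$ by Proposition \ref{Proposition:Key}(1). Two such bases differ by a change-of-basis matrix of determinant one, so Proposition \ref{Proposition:Key}(2) implies the resulting $M$'s lie in the same $\SL_{n+1}(k)$-orbit. If $(\mathcal{F},\lambda,v)$ and $(\mathcal{F}',\lambda',v')$ are equivalent via an intertwining isomorphism $\rho$, choose bases $s, s'$ of $H^0(S,\mathcal{F})$ and $H^0(S,\mathcal{F}')$ with the prescribed wedges; then $\rho(s)$ and $s'$ are both ordered bases of $H^0(S,\mathcal{F}')$ whose top wedges equal $v'$, so they differ by some $P \in \SL_{n+1}(k)$, i.e.\ $\rho(s\,\transp{P}^{-1}) = s'$. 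The triples $(\mathcal{F},\lambda,s\,\transp{P}^{-1})$ and $(\mathcal{F}',\lambda',s')$ are then equivalent in the sense of Proposition \ref{Proposition:Key}(1), and combining with part (2) of that proposition yields $M' = M \cdot P$, so $M$ and $M'$ lie in the same $\SL_{n+1}(k)$-orbit.

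There is no real obstacle in this argument: the entire content lies in the wedge-product identity $\det(\transp{P}^{-1}) = (\det P)^{-1}$, which translates the condition $P \in \SL_{n+1}(k)$ into preservation of the top wedge $v$. The only point where one must be slightly careful is the transpose-inverse convention in Proposition \ref{Proposition:Key}(2): the induced action of $P$ on the ordered basis is via $\transp{P}^{-1}$, not $P$ itself, so one must take the determinant of the correct matrix. Once this is observed, the two maps are visibly inverse to one another and the theorem follows.
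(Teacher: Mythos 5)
Your proof is correct and takes essentially the same approach as the paper: deduce the theorem from the rigidified bijection of Proposition \ref{Proposition:Key} by tracking the top wedge $v = s_0 \wedge \cdots \wedge s_n$, and observe that $P \in \SL_{n+1}(k)$ precisely when $\det\transp{P}^{-1} = 1$, i.e.\ when $v$ is preserved.
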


\begin{proof}
For pairs $M,M' \in (k^2 \otimes \Sym_2 k^{n+1})_S$,
let $(\mathcal{F},\lambda,s)$ (resp.\ $(\mathcal{F}',\lambda',s')$)
be a triple corresponding to $M$ (resp.\ $M'$)
by Proposition \ref{Proposition:Key}.
We put
$v := s_0 \wedge s_1 \wedge \cdots \wedge s_n$ and
$v' := s'_0 \wedge s'_1 \wedge \cdots \wedge s'_n$.
We shall show that
$M,M'$ are in the same $\SL_{n+1}(k)$-orbit if and only if
$(\mathcal{F},\lambda,v)$, $(\mathcal{F}',\lambda',v')$ are equivalent.

Assume that $M,M'$ are in the same $\SL_{n+1}(k)$-orbit.
Take $P \in \SL_{n+1}(k)$ with $M' = M \cdot P$.
By Proposition \ref{Proposition:Key}, there is an isomorphism
$\rho \colon \mathcal{F} \overset{\sim}{\longrightarrow} \mathcal{F}'$
with $\lambda = (\transp{\rho}) \circ \lambda' \circ \rho$
and $\rho(s\,\transp{P}^{-1})  = s'$.
Hence we have
\begin{align*}
v' &= \textstyle \big( \bigwedge^{n+1} H^0(\rho) \big) \big( (s\,\transp{P}^{-1})_0 \wedge (s\,\transp{P}^{-1})_1 \wedge \cdots \wedge (s\,\transp{P}^{-1})_n \big) \\
  &= \textstyle \big( \bigwedge^{n+1} H^0(\rho) \big)\big( (\det \transp{P}^{-1})v \big) \\
  &= \textstyle \big( \bigwedge^{n+1} H^0(\rho) \big)(v)
\end{align*}
because $\det \transp{P}^{-1} = 1$.
Hence $(\mathcal{F},\lambda,v)$, $(\mathcal{F}',\lambda',v')$ are equivalent.

Conversely, assume that
$(\mathcal{F},\lambda,v)$, $(\mathcal{F}',\lambda',v')$ are equivalent.
There is
$\rho \colon \mathcal{F} \overset{\sim}{\longrightarrow} \mathcal{F}'$
satisfying
$\lambda = (\transp{\rho}) \circ \lambda' \circ \rho$ and $\big( \bigwedge^{n+1} H^0(\rho) \big)(v) = v'$.
Since $\rho(s),s'$ are ordered $k$-bases of $H^0(S,\mathcal{F}')$,
there is $P \in \GL_{n+1}(k)$ with $\rho(s\,\transp{P}^{-1})  = s'$.
Then $(\mathcal{F},\lambda,s\,\transp{P}^{-1})$, $(\mathcal{F}',\lambda',s')$
are equivalent in the sense of Proposition \ref{Proposition:Key}.
Hence we have $M' = M \cdot P$.
Since $\big( \bigwedge^{n+1} H^0(\rho) \big)(v) = v'$, we have $\det \transp{P}^{-1} = 1$.
Therefore, we conclude that $P \in \SL_{n+1}(k)$,
and $M,M'$ are in the same $\SL_{n+1}(k)$-orbit.
\end{proof}

\section{Parametrizations of $\SL_{n+1}(k)$-orbits of pairs of symmetric matrices
with fixed discriminant polynomials}
\label{Section:WoodBhargavaGrossWang}

As an application of our results,
we give parametrizations of $\SL_{n+1}(k)$-orbits of pairs of symmetric matrices
with fixed discriminant polynomials generalizing some results of Wood and Bhargava-Gross-Wang
(\cite{Wood}, \cite{BhargavaGrossWang1}, \cite{BhargavaGrossWang2}).

We fix a field $k$, an algebraic closure $\overline{k}$ of $k$,
a positive integer $n \geq 1$,
and a closed subscheme $\iota \colon S \hookrightarrow \P^1$
which is finite over $k$ and satisfies $\dim_k H^0(S,\O_S) = n+1$.
We also fix an isomorphism
$c \colon (\iota^{!}\O_{\P^1}(-1))[1] \overset{\sim}{\longrightarrow} \O_S$
of $\O_S$-modules.

For a pair $M \in (k^2 \otimes \Sym_2 k^{n+1})_S$,
we take $(\mathcal{F},\lambda)$ corresponding to $M$ by
Theorem \ref{Theorem:MainTheorem}.
The isomorphism class of $\mathcal{F}$ is determined by
the $\GL_{n+1}(k)$-orbit of $M$.
We call $\mathcal{F}$ the {\it coherent $\O_S$-module associated with $M$}.
Let
\[ (k^2 \otimes \Sym_2 k^{n+1})_{S,\mathrm{fr}} \subset (k^2 \otimes \Sym_2 k^{n+1})_S \]
be the subset consisting of pairs whose associated $\O_S$-modules are free of rank one.
Here ``fr'' stands for ``free''.
The set $(k^2 \otimes \Sym_2 k^{n+1})_{S,\mathrm{fr}}$
is non-empty by Corollary \ref{Corollary:NonEmpty} (see Remark \ref{Remark:NonEmpty}).

\begin{rem}
\label{Remark:RegularOrbit}
When $\chara k \neq 2$, a pair $M$ is in
$(k^2 \otimes \Sym_2 k^{n+1})_{S,\mathrm{fr}}$ if and only if
the pencil of quadrics associated with $M$ is ``regular'' in the sense of \cite{Wang}.
\end{rem}

\begin{lem}
\label{Lemma:FreenessCriterion}
For a pair $M = (M_0,M_1) \in (k^2 \otimes \Sym_2 k^{n+1})_{S}$,
the following are equivalent.
\begin{itemize}
\item $M \in(k^2 \otimes \Sym_2 k^{n+1})_{S,\mathrm{fr}}$
\item $\rank(a M_0 + b M_1) \geq n$ for any $a,b \in \overline{k}$ with $(a,b) \neq (0,0)$.
\end{itemize}
\end{lem}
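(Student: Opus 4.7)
The plan is to translate both conditions in the lemma into the same invariant: the dimensions of the geometric fibers of $\iota_{\ast}\mathcal{F}$ at points of $\P^1(\overline{k})$. Starting from the short exact sequence (\ref{Proposition:Key:ShortExactSequence1}) presenting $\iota_{\ast}\mathcal{F}$ in the proof of Proposition \ref{Proposition:Key}, I would restrict (tensor with the residue field) at an arbitrary point $[a:b] \in \P^1(\overline{k})$; since $\O_{\P^1}(-1)$ and $\O_{\P^1}$ are line bundles, this produces the right-exact sequence
\begin{equation*}
\overline{k}^{n+1} \xrightarrow{\,a M_0 + b M_1\,} \overline{k}^{n+1} \longrightarrow (\iota_{\ast}\mathcal{F}) \otimes_{\O_{\P^1}} \overline{k} \longrightarrow 0,
\end{equation*}
so that the geometric fiber has dimension $(n+1) - \rank(a M_0 + b M_1)$. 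The rank condition $\rank(a M_0 + b M_1) \geq n$ is therefore equivalent to this fiber having dimension at most one.

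Next I would translate freeness of rank one into the fiberwise condition. Since $S$ is a disjoint union of spectra of local Artinian $k$-algebras and the assumption $\length_{\O_{S,x}} \mathcal{F}_x = \length_{\O_{S,x}} \O_{S,x}$ forces any minimal surjection $\O_{S,x} \twoheadrightarrow \mathcal{F}_x$ to be an isomorphism (by length comparison), $\mathcal{F}$ is free of rank one if and only if $\mathcal{F}_x$ is cyclic at each closed point $x \in S$, i.e.\ $\dim_{\kappa(x)} \mathcal{F}_x / \mathfrak{m}_x \mathcal{F}_x \leq 1$. These residue-field dimensions are stable under the base change $k \to \overline{k}$, so the same condition can be tested on $\mathcal{F} \otimes_k \overline{k}$ at all closed points of $S_{\overline{k}}$—which are exactly the points $[a:b] \in \P^1(\overline{k})$ with $\det(a M_0 + b M_1) = 0$.

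Combining the two steps, $\mathcal{F}$ is free of rank one if and only if $\rank(a M_0 + b M_1) \geq n$ holds at every zero of $\det(X_0 M_0 + X_1 M_1)$ in $\P^1(\overline{k})$; outside this locus the rank is automatically $n+1 \geq n$, so the condition may be equivalently imposed for all $(a,b) \in \overline{k}^2 \setminus \{(0,0)\}$, yielding the stated equivalence. The main point requiring care is the compatibility of freeness of rank one with the base change $k \to \overline{k}$; this is handled cleanly because $S$ is zero-dimensional Gorenstein, so freeness of $\mathcal{F}$ reduces to cyclicity at each stalk, and cyclicity is detected by a residue-field fiber dimension that behaves well under field extension.
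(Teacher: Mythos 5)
Your proof is correct and follows essentially the same route as the paper: both compute the geometric fiber dimension from the presentation (\ref{Proposition:Key:ShortExactSequence1}) as $(n+1) - \rank(a M_0 + b M_1)$, and both reduce freeness of rank one to the condition that this fiber dimension is $1$ at each point of $S_{\overline{k}}$. The only cosmetic difference is that the paper invokes \cite[Chapter VII, \S 4.9]{BourbakiAlgebraII} to reduce immediately to $k = \overline{k}$, while you argue directly that the minimal number of generators of $\mathcal{F}_x$ is preserved under the (faithfully flat, local) base change to each point of $S_{\overline{k}}$ lying over $x$.
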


\begin{proof}
Let $\mathcal{F}$ be the coherent $\O_S$-module associated with $M$.
By \cite[Chapter VII, \S 4.9]{BourbakiAlgebraII},
$\mathcal{F}$ is free of rank one if and only if
$\mathcal{F} \otimes_k \overline{k}$ is free of rank one.
Hence we may assume $k = \overline{k}$.
Since $\length_{\O_{S,x}} \O_{S,x} = \length_{\O_{S,x}} {\mathcal{F}}_{x}$ for each $x \in S$,
$\mathcal{F}$ is free of rank one if and only if
$\dim_{k} (\mathcal{F} \otimes_{\O_S} (\O_S/m_{S,x})) = 1$ for each $x \in S$,
where $m_{S,x} \subset \O_S$ is the ideal sheaf at $x$.
Let $(a,b)$ be the projective coordinate of $x$.
Then, from the exact sequence
\[
\begin{CD}
0 @>>> \O_{\P^1}(-1)^{\oplus (n+1)}
  @>{M}>> \O_{\P^1}^{\oplus (n+1)}
  @>>> \iota_{\ast} \mathcal{F} @>>> 0,
\end{CD}
\]
we see that
$\dim_{k} (\mathcal{F} \otimes_{\O_S} (\O_S/m_{S,x})) = (n+1) - \rank(a M_0 + b M_1)$.
Hence $\mathcal{F}$ is free of rank one
if and only if
$\rank(a M_0 + b M_1) = n$ for any $(a,b) \in S$.
It is equivalent to the second condition because we always have
$\rank(a M_0 + b M_1) = n+1$ for any $(a,b) \notin S$,
and $\rank(a M_0 + b M_1) \leq n$ for any $(a,b) \in S$.
\end{proof}

\begin{lem}
\label{Lemma:DiscriminantReducedness}
The following are equivalent.
\begin{itemize}
\item $(k^2 \otimes \Sym_2 k^{n+1})_{S,\mathrm{fr}} = (k^2 \otimes \Sym_2 k^{n+1})_S$.
\item $S$ is reduced.
\item There is a homogeneous polynomial $f(X_0,X_1) \in k[X_0,X_1]$ of degree $n+1$
without multiple factors over $k$ satisfying $S = (f = 0)$.
\end{itemize}
\end{lem}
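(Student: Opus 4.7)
The plan is to deduce the equivalence of the second and third conditions as a formality about closed subschemes of $\P^1$, and then to handle the equivalence with the first condition by feeding the length hypothesis in $(k^2 \otimes \Sym_2 k^{n+1})_S$ through Theorem \ref{Theorem:MainTheorem}.

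First, (b) $\iff$ (c) should be essentially tautological. Any closed subscheme $S \subset \P^1$ finite of degree $n+1$ has invertible ideal sheaf of degree $-(n+1)$, hence isomorphic to $\O_{\P^1}(-(n+1))$, and so is cut out by a homogeneous polynomial $f \in H^0(\P^1,\O_{\P^1}(n+1))$ which is unique up to $k^\times$. Reducedness of $\O_S$ is the vanishing of nilpotents, which on each affine chart of $\P^1$ translates to $f$ having no repeated irreducible factors.

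For (b) $\implies$ (a), I would invoke the length condition. Let $M \in (k^2 \otimes \Sym_2 k^{n+1})_S$ have associated sheaf $\mathcal{F}$, so $\length_{\O_{S,x}} \mathcal{F}_x = \length_{\O_{S,x}} \O_{S,x}$ for every $x \in S$. If $S$ is reduced each $\O_{S,x}$ is a field, and this common length equals $1$; hence $\mathcal{F}_x$ is a one-dimensional $\O_{S,x}$-vector space, which is free of rank one. Therefore $\mathcal{F}$ is free of rank one globally, placing $M$ in $(k^2 \otimes \Sym_2 k^{n+1})_{S,\mathrm{fr}}$.

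For the contrapositive of (a) $\implies$ (b), the strategy is to exhibit a pair $(\mathcal{F},\lambda)$ with $\mathcal{F}$ non-free, which Theorem \ref{Theorem:MainTheorem} will then realize by some $\GL_{n+1}(k)$-orbit lying outside the ``fr'' locus. Suppose $S$ is not reduced. Following the proof of Lemma \ref{Lemma:Duality:SelfDual}, decompose $S = \bigsqcup_i \Spec A_i$ with $A_i \cong k_i[T]/(T^{m_i})$; by hypothesis some $m_i =: m$ is $\geq 2$. At that point define
\[
\mathcal{F}_i := k_i[T]/(T) \,\oplus\, k_i[T]/(T^{m-1}),
\]
which has length $1+(m-1)=m$ and is not cyclic, hence not free of rank one over $A_i$. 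At the remaining points set $\mathcal{F}_j := A_j$. The resulting $\mathcal{F}$ satisfies the length hypothesis of Theorem \ref{Theorem:MainTheorem} but is not free. Applying Lemma \ref{Lemma:Duality:SelfDual} to each cyclic summand produces a symmetric isomorphism $\lambda \colon \mathcal{F} \xrightarrow{\sim} \mathcal{F}^{\vee}$ as the direct sum of self-dualities. Theorem \ref{Theorem:MainTheorem} then converts $(\mathcal{F},\lambda)$ into a $\GL_{n+1}(k)$-orbit in $(k^2 \otimes \Sym_2 k^{n+1})_S$ whose elements cannot belong to the ``fr'' subset, so the inclusion in (a) is strict. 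The only part requiring any delicacy is the construction of this non-free self-dual $\mathcal{F}$, and it becomes straightforward once the explicit local structure $A_i \cong k_i[T]/(T^{m_i})$ is used.
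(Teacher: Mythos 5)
Your proof is correct and follows essentially the same route as the paper: reduce to the local ring $k'[T]/(T^m)$, note that for reduced $S$ the length condition forces $\mathcal{F}$ to be free of rank one, and for non-reduced $S$ produce a non-free $\mathcal{F}$ satisfying the length hypothesis and self-dual by Lemma \ref{Lemma:Duality:SelfDual}, which Theorem \ref{Theorem:MainTheorem} realizes by an orbit outside the free locus. The only cosmetic difference is that you write down the non-free witness $k_i[T]/(T)\oplus k_i[T]/(T^{m-1})$ explicitly, whereas the paper phrases the same step as ``any module of length $m$ over $k'[T]/(T^m)$ is free, hence $m=1$,'' leaving the counterexample implicit; also, Lemma \ref{Lemma:Duality:SelfDual} already applies to $\mathcal{F}$ directly, so you need not assemble $\lambda$ summand-by-summand, though doing so is harmless.
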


\begin{proof}
The equivalence between the second and the third conditions is immediate.
Assume that the first condition is satisfied.
We may assume $S = \Spec k'[T]/(T^m)$ for some finite extension $k'/k$ and $m \geq 1$
(see the proof of Lemma \ref{Lemma:Duality:SelfDual}).
By the first condition and Lemma \ref{Lemma:Duality:SelfDual},
any finitely generated $k'[T]/(T^m)$-module of length $m$ is free of rank one.
Hence we have $m = 1$, and $S$ is reduced.
Conversely, assume that $S$ is reduced.
Then, $S$ is locally isomorphic to $\Spec k'$ for some finite extension $k'/k$.
Since any finitely generated $k'$-module $M$ with $\length_k M = [k':k]$
is free of rank one, the first condition is satisfied.
\end{proof}

\begin{lem}
\label{Lemma:ClassificationSL(n+1)Orbits:Existence}
There is a pair $M \in (k^2 \otimes \Sym_2 k^{n+1})_{S,\mathrm{fr}}$
such that $\disc(M)$ is written as $\disc(M) = X_1^s \cdot g(X_0,X_1)$,
where the coefficient of $X_0^{n+1-s}$ in $g(X_0,X_1)$ is equal to $(-1)^{n(n+1)/2}$.
\end{lem}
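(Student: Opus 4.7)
The strategy has two parts: produce an explicit pair with the correct discriminant polynomial, and verify that its associated sheaf is free of rank one.

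First, pick a homogeneous polynomial $f(X_0,X_1) \in k[X_0,X_1]$ of degree $n+1$ with $S = (f = 0)$, and factor $f = X_1^s \cdot \tilde g(X_0,X_1)$ with $\tilde g(1,0) \neq 0$. Rescale $f$ so that the coefficient of $X_0^{n+1-s}$ in $\tilde g$ equals $(-1)^{n(n+1)/2}$; this fixes the polynomial that we wish to realize as $\det(X_0 M_0 + X_1 M_1)$. The plan is then to construct $M = (M_0,M_1)$ explicitly, using the linear algebra developed in Appendix \ref{Appendix:SymmetricDeterminant}.

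The natural construction takes $M_0$ to equal (or contain as a distinguished block) the $(n+1) \times (n+1)$ anti-diagonal identity matrix $J$, whose determinant is exactly $(-1)^{n(n+1)/2}$ (this is the sign of the reversal permutation on $n+1$ letters). A direct cofactor expansion then shows that the coefficient of $X_0^{n+1}$ in $\det(X_0 J + X_1 M_1)$ is $(-1)^{n(n+1)/2}$, which gives the desired normalization when $s = 0$. For general $s \geq 1$ one must absorb the vanishing at $(1:0)$ into the matrix $M_0$; the appropriate reduction is to use a block-anti-diagonal modification of $J$ together with a symmetric $M_1$ tailored to the remaining factor $\tilde g$. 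The explicit determinant formula needed for this step is precisely Lemma \ref{Appendix:Lemma:LinearAlgebra2}, which, given any prescribed homogeneous polynomial of degree $n+1$ with the correct sign convention, produces symmetric matrices realizing it as their pencil determinant.

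For the freeness condition, apply Lemma \ref{Lemma:FreenessCriterion}: we must show that $\rank(a M_0 + b M_1) \geq n$ for every $(a,b) \in \overline{k}^2 \setminus \{0\}$. For the pair coming from the construction above, the rank drop at each point of $S$ is exactly one, because that construction is designed so that its associated coherent sheaf $\mathcal{F}$ is free of rank one over $\O_S$ (this is the viewpoint underlying Remark \ref{Remark:NonEmpty}, where the $\mathcal{F} = \O_S$ case of Corollary \ref{Corollary:NonEmpty} is realized by exactly such a matrix). Alternatively, this can be checked a posteriori from the explicit form of $M_0$ and $M_1$.

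The main obstacle is the appendix calculation: producing a symmetric $M_1$ that simultaneously realizes a prescribed determinant polynomial and is compatible with a possibly singular $M_0$ (when $s \geq 1$). Once that explicit linear algebra is in hand, the sign $(-1)^{n(n+1)/2}$ drops out naturally from $\det J$, and the freeness verification is immediate from the rank criterion.
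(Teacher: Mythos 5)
Your proposal takes essentially the same route as the paper, which proves this lemma simply by citing Lemma \ref{Appendix:Lemma:LinearAlgebra2} (for explicit symmetric matrices realizing the prescribed discriminant together with the rank bound) and Lemma \ref{Lemma:FreenessCriterion} (to translate that rank bound into freeness of the associated sheaf). The one expository imprecision is that the matrix $M_0$ built in the appendix is a Hankel-type anti-triangular matrix rather than the anti-diagonal identity $J$ --- indeed a symmetric $M_1$ with $\det(X_0 J + X_1 M_1) = f$ need not exist, for instance over a field of characteristic two --- but since you ultimately invoke Lemma \ref{Appendix:Lemma:LinearAlgebra2} for the actual construction this does not affect the argument.
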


\begin{proof}
See Lemma \ref{Appendix:Lemma:LinearAlgebra2}. See also Lemma \ref{Lemma:FreenessCriterion}.
\end{proof}

The following result is a generalization of some results of Wood and Bhargava-Gross-Wang.
Note that the factor $(-1)^{n(n+1)/2}$ does not appear in
\cite{BhargavaGrossWang1}, \cite{BhargavaGrossWang2}
because it is already incorporated in their definition of discriminant polynomials.

\begin{thm}
\label{Theorem:ClassificationSL(n+1)Orbits}
We put $L := H^0(S,\O_S)$, which is a finite $k$-algebra with $\dim_k L = n+1$.
We define a group $G_S$ by
\[ G_S := (k^{\times} \times L^{\times})/\{ (\Norm_{L/k}(\alpha)^{-1}, \alpha^2) \mid \alpha \in L^{\times} \}. \]
We take a homogeneous polynomial $f(X_0,X_1) \in k[X_0,X_1]$ with $S = (f = 0)$.
We write $f(X_0,X_1) = X_1^s \cdot g(X_0,X_1)$ with a polynomial $g(X_0,X_1)$ not divisible by $X_1$.
Let $C_f \in k^{\times}$ be the coefficient of $X_0^{n+1-s} X_1^s$ in $f(X_0,X_1)$.
(If $(1,0) \notin S$, we have $s = 0$.
If $(1,0) \in S$, the integer $s$ is equal to the multiplicity of $(1,0)$ in $S$.)

\begin{enumerate}
\item There is a simply transitive action of $G_S$ on
the set of $\SL_{n+1}(k)$-orbits in $(k^2 \otimes \Sym_2 k^{n+1})_{S,\mathrm{fr}}$.
For $M \in (k^2 \otimes \Sym_2 k^{n+1})_{S,\mathrm{fr}}$ and $(u,\alpha) \in G_S$,
we denote this action by
\[ \big( [M], (u,\alpha) \big) \mapsto [M] \cdot (u,\alpha). \]
Here we denote the $\SL_{n+1}(k)$-orbit containing $M$ by $[M]$.

\item For any $M \in (k^2 \otimes \Sym_2 k^{n+1})_{S,\mathrm{fr}}$
and $(u,\alpha) \in G_S$,
we have
\[ \disc\big( [M] \cdot (u,\alpha) \big) = u^2 \Norm_{L/k}(\alpha) \disc([M]). \]
(Both hand sides are well-defined because
the action of $\SL_{n+1}(k)$ does not change the discriminant polynomials.)

\item There is a bijection between the following sets.
\begin{itemize}
\item The set of $\SL_{n+1}(k)$-orbits in $(k^2 \otimes \Sym_2 k^{n+1})_{S,\mathrm{fr}}$
whose discriminant polynomials are equal to $f(X_0,X_1)$.
\item The quotient set
\[ \{ \, (u,\alpha) \in k^{\times} \times L^{\times}
  \mid C_f = (-1)^{n(n+1)/2} u^{2} \Norm_{L/k}(\alpha) \, \}/\sim, \]
where the equivalence relation $\sim$ is defined as follows:
$(u,\alpha) \sim (u',\alpha')$ if and only if
$(u,\alpha) = (\Norm_{L/k}(\beta)^{-1} u', \beta^2 \alpha')$ for some $\beta \in L^{\times}$.
\end{itemize}

\item There is a pair $M \in (k^2 \otimes \Sym_2 k^{n+1})_{S,\mathrm{fr}}$
with $\disc(M) = f(X_0,X_1)$
if and only if
\[ C_f = (-1)^{n(n+1)/2} u^2 \Norm_{L/k}(\alpha) \]
for some $u \in k^{\times}$ and $\alpha \in L^{\times}$.

\item Let $u \in k^{\times}$ and $\alpha \in L^{\times}$ be
elements satisfying $C_f = (-1)^{n(n+1)/2} u^{2} \Norm_{L/k}(\alpha)$.
Let $M \in (k^2 \otimes \Sym_2 k^{n+1})_{S,\mathrm{fr}}$
be a pair in the $\SL_{n+1}(k)$-orbit corresponding to $(u,\alpha)$ by (3).
Then the stabilizer of $M$ in $\SL_{n+1}(k)$ is isomorphic to
\[ \Res_{L/k}(\mu_2)_{\Norm_{L/k} = 1} := \{\, \beta \in L^{\times} \mid \beta^2 = 1,\ \Norm_{L/k}(\beta) = 1 \,\}. \]
\end{enumerate}
\end{thm}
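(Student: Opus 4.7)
The plan is to transport everything through the bijection of Theorem \ref{Theorem:MainTheoremSL(n+1)}, which identifies $\SL_{n+1}(k)$-orbits in $(k^2 \otimes \Sym_2 k^{n+1})_{S,\mathrm{fr}}$ with equivalence classes of triples $(\mathcal{F}, \lambda, v)$ where $\mathcal{F}$ is a free $\O_S$-module of rank one. Setting $\mathcal{F} = \O_S$, the identification $\mathcal{F}^\vee = \Homsheaf_S(\O_S, \O_S) = \O_S$ exhibits $\lambda$ as multiplication by some $\alpha \in L^\times$ (automatically symmetric by Lemma \ref{Lemma:RankOneSymmetric}), and after fixing a $k$-basis $v_0 \in \bigwedge^{n+1} L$ of the one-dimensional space, we encode $v$ as $v = u v_0$ with $u \in k^\times$. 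Isomorphisms of $\O_S$ are given by multiplication by $\beta \in L^\times$, satisfy $\transp{\rho_\beta} = \rho_\beta$, and scale the volume form by $\bigwedge^{n+1} H^0(\rho_\beta) = \Norm_{L/k}(\beta)$. Consequently, two triples with parameters $(u,\alpha)$ and $(u',\alpha')$ are equivalent iff there exists $\beta \in L^\times$ with $\alpha = \beta^2 \alpha'$ and $u = \Norm_{L/k}(\beta)^{-1} u'$, identifying the set of $\SL_{n+1}(k)$-orbits with $G_S = (k^\times \times L^\times)/H$ for $H = \{(\Norm_{L/k}(\beta)^{-1}, \beta^2) \mid \beta \in L^\times\}$. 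This yields the simply transitive action in Part (1) as $G_S$ acting on itself by multiplication.

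For Part (2), I first observe that $u^2 \Norm_{L/k}(\alpha)$ is well-defined on $G_S$: if $(u',\alpha') = (\Norm_{L/k}(\beta)^{-1} u,\, \beta^2 \alpha)$ then $(u')^2 \Norm_{L/k}(\alpha') = \Norm_{L/k}(\beta)^{-2} u^2 \cdot \Norm_{L/k}(\beta)^2 \Norm_{L/k}(\alpha) = u^2 \Norm_{L/k}(\alpha)$. The desired transformation law is then verified by splitting the action into the coordinate subactions. For the subaction by $(u_0, 1)$, only the volume form changes while the pair $(\mathcal{F}, \lambda)$ is unchanged, so representatives stay in the same $\GL_{n+1}(k)$-orbit and differ by some $P \in \GL_{n+1}(k)$; Proposition \ref{Proposition:Key}(2) together with the wedge-product identity $v' = (\det P)^{-1} v$ and $\disc(M \cdot P) = (\det P)^2 \disc(M)$ determines the factor $u_0^2$ appearing in $\disc$. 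For the subaction by $(1, \alpha_0)$, rescaling $\iota_{\ast}(\lambda)$ by $\alpha_0$ in the right column of diagram (\ref{Proposition:Key:CommutativeDiagram1}) propagates via the uniquely determined lift $q$ to a new matrix $M'$; passing to global sections and using that the determinant over $k$ of the map $L \xrightarrow{\alpha_0 \cdot} L$ equals $\Norm_{L/k}(\alpha_0)$ yields the norm factor.

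Parts (3)--(5) then follow smoothly. Lemma \ref{Lemma:ClassificationSL(n+1)Orbits:Existence} supplies a base pair $M_b$ with $\disc(M_b) = X_1^s g(X_0,X_1)$ whose coefficient of $X_0^{n+1-s} X_1^s$ equals $(-1)^{n(n+1)/2}$. Combining with Part (2), the $\SL_{n+1}(k)$-orbits with discriminant polynomial $f$ correspond exactly to those $(u,\alpha) \in k^\times \times L^\times$ satisfying $C_f = (-1)^{n(n+1)/2} u^2 \Norm_{L/k}(\alpha)$ modulo the equivalence $H$, proving (3); Part (4) is immediate. For (5), the stabilizer of $M$ in $\SL_{n+1}(k)$ corresponds under the bijection to automorphisms $\rho_\beta$ of the triple $(\O_S, \lambda_\alpha, u v_0)$, and the fixing conditions $\beta^2 \alpha = \alpha$ and $\Norm_{L/k}(\beta)^{-1} u = u$ force $\beta^2 = 1$ and $\Norm_{L/k}(\beta) = 1$, yielding precisely $\Res_{L/k}(\mu_2)_{\Norm_{L/k} = 1}$.

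The main technical obstacle is the norm computation in Part (2) for the $(1, \alpha_0)$-subaction: verifying that rescaling $\iota_{\ast}(\lambda)$ by $\alpha_0$ on the right column of diagram (\ref{Proposition:Key:CommutativeDiagram1}) produces a new matrix $M'$ with $\disc(M')/\disc(M) = \Norm_{L/k}(\alpha_0)$. The key insight is that the unique lift to an automorphism of the left and middle columns of the resolution induces, after taking global sections, a $k$-linear map on $k^{n+1}$ whose matrix (in the basis determined by $s$) is precisely the matrix of multiplication-by-$\alpha_0$ on $L$; its $k$-determinant is by definition the norm $\Norm_{L/k}(\alpha_0)$.
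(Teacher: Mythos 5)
Your proposal follows the same route as the paper: transport the problem through Theorem \ref{Theorem:MainTheoremSL(n+1)} to triples $(\O_S,\lambda,v)$ with $\mathcal{F}=\O_S$, parametrize $\lambda$ by $L^\times$ and $v$ by $k^\times$, show the equivalence relation collapses to quotienting by $\{(\Norm_{L/k}(\beta)^{-1},\beta^2)\}$, and then read off the discriminant change and the stabilizer. The only genuine stylistic difference is in Part (2): you split the $G_S$-action into the two subactions $(u_0,1)$ and $(1,\alpha_0)$ and compute the determinant factor from each, whereas the paper handles the full action at once via the explicit formula $(M'_0,M'_1)=(\transp{P}M_0\transp{Q}P,\,\transp{P}M_1\transp{Q}P)$ with $Q$ the matrix of multiplication by $\alpha$, from which $\disc(M')=(\det P)^2(\det Q)\disc(M)$ is immediate. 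Your split is logically equivalent, though you should be careful that the action of $(1,\alpha_0)$ rescales $\lambda$ by $\alpha_0^{-1}$ (with the conventions that make Part (2) come out inverse-free), and the matrix $Q$ entering the factorization of $M'$ is the multiplication-by-$\alpha_0$ map, not by $\alpha_0^{-1}$; your informal description \textquotedblleft rescaling $\iota_{\ast}(\lambda)$ by $\alpha_0$\textquotedblright\ blurs this point, and the statement that passing to global sections of \textquotedblleft the lift\textquotedblright\ directly produces the multiplication-by-$\alpha_0$ matrix on $k^{n+1}$ deserves the more careful argument the paper gestures at (the lift acts on $\O_{\P^1}^{\oplus(n+1)}$, and the identification with multiplication by $\alpha_0$ in the basis $s$ must be tracked through the resolution and the dual resolution). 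These are matters of rigor, not of route; the architecture and the conclusions are the paper's.
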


\begin{proof}
(1) \ Let $(\mathcal{F},\lambda,v)$ be a triple corresponding to 
an $\SL_{n+1}(k)$-orbit in $(k^2 \otimes \Sym_2 k^{n+1})_{S,\mathrm{fr}}$
by Theorem \ref{Theorem:MainTheoremSL(n+1)}.
Since $\mathcal{F}$ is a free $\O_S$-module of rank one, we may assume $\mathcal{F} = \O_S$.
Then, $\lambda \colon L \overset{\sim}{\longrightarrow} L$ is an $L$-automorphism,
and $v \in \bigwedge^{n+1} L$ is a $k$-basis.
For $(u,\alpha) \in G_S$, we define the map
$(\O_S, \lambda, v) \mapsto (\O_S, \alpha^{-1} \lambda, u^{-1} v)$.
(The morphism $\alpha^{-1} \lambda$ is symmetric by Lemma \ref{Lemma:RankOneSymmetric}.)
This map defines an action of $k^{\times} \times L^{\times}$ on the set of
$\SL_{n+1}(k)$-orbits in $(k^2 \otimes \Sym_2 k^{n+1})_{S,\mathrm{fr}}$.
Let $(\O_S,\lambda',v')$ be a triple corresponding to
another $\SL_{n+1}(k)$-orbit in $(k^2 \otimes \Sym_2 k^{n+1})_{S,\mathrm{fr}}$.
Since  $\lambda,\lambda' \colon L \overset{\sim}{\longrightarrow} L$ are $L$-automorphisms,
there is $\alpha \in L^{\times}$ with $\lambda' = \alpha^{-1} \lambda$.
Since $v,v' \in \bigwedge^{n+1} L$ are $k$-bases,
there is $u \in k^{\times}$ with $v' = u^{-1} v$.
Hence this action is transitive.
For $(u,\alpha) \in G_S$, two triples
$(\O_S, \lambda, v), (\O_S, \alpha^{-1} \lambda, u^{-1} v)$ are equivalent if and only if
there is an $L$-automorphism $\rho \colon L \overset{\sim}{\longrightarrow} L$ with
$\lambda = (\transp{\rho}) \circ (\alpha^{-1} \lambda) \circ \rho$
and $\big( \bigwedge^{n+1} H^0(\rho) \big)(v) = u^{-1} v$
(see Theorem \ref{Theorem:MainTheoremSL(n+1)}).
This condition is satisfied if and only if
$\alpha = \rho(1)^2$ and $u^{-1} = \Norm_{L/k}(\rho(1))$.
Hence the action of $G_S$ is simply transitive.

\vspace{0.1in}

\noindent
(2) \ We write $M = (M_0,M_1)$.
We take a pair $M' = (M'_0,M'_1)$ in the $\SL_{n+1}(k)$-orbit $[M] \cdot (u,\alpha)$.
Consider a triple $(\O_S, \lambda, s)$ (resp.\ $(\O_S, \lambda', s')$)
corresponding to $M$ (resp.\ $M'$) by Proposition \ref{Proposition:Key}.
We have $\lambda' = \alpha^{-1} \lambda$.
There is a unique $P \in \GL_{n+1}(k)$ with $s\,\transp{P}^{-1}  = s'$.
Then we have $\det \transp{P}^{-1} = u^{-1} \iff \det P = u$.
Let $Q \in \GL_{n+1}(k)$ be the matrix representing
the multiplication-by-$\alpha$ map on $L$ with respect to the ordered $k$-basis $s$.
It is straightforward to see from the proof of Proposition \ref{Proposition:Key}
that $(M'_0, M'_1) = (\transp{P} M_0 \transp{Q} P,\, \transp{P} M_1 \transp{Q} P)$.
Since $\det \transp{Q} = \det Q = \Norm_{L/k}(\alpha)$, we have
\begin{align*}
 \disc\big( [M] \cdot (u,\alpha) \big) &= \disc(M') \\
  &= (\det P)^2 (\det Q) \det(X_0 M_0 + X_1 M_1) \\
  &= u^2 \Norm_{L/k}(\alpha) \disc([M]).
\end{align*}

\vspace{0.1in}

\noindent
(3) \ By Lemma \ref{Lemma:ClassificationSL(n+1)Orbits:Existence},
there is a pair $\widetilde{M} \in (k^2 \otimes \Sym_2 k^{n+1})_{S,\mathrm{fr}}$
such that the coefficient of $X_0^{n+1-s} X_1^s$ in $\disc(\widetilde{M})$ is $(-1)^{n(n+1)/2}$.
By (2), we see that
$\disc\big( [\widetilde{M}] \cdot (u,\alpha) \big) = f(X_0,X_1)$
if and only if
$C_f = (-1)^{n(n+1)/2} u^2 \Norm_{L/k}(\alpha)$.
Hence the assertion follows from (1).

\vspace{0.1in}

\noindent
(4) \ This is a special case of (3).

\vspace{0.1in}

\noindent
(5) \ Let $(\O_S, \lambda, s)$ be a triple
corresponding to $M$ by Proposition \ref{Proposition:Key}.
A matrix $P \in \SL_{n+1}(k)$ is in the stabilizer of $M$ (i.e.\ $M \cdot P = M$)
if and only if $(\O_S, \lambda, s)$ is equivalent to $(\O_S, \lambda, s\,\transp{P}^{-1})$.
This condition is satisfied if and only if
there is an $L$-automorphism $\rho \colon L \overset{\sim}{\longrightarrow} L$ with
$\lambda = (\transp{\rho}) \circ \lambda \circ \rho$
and $\rho(s) = s\,\transp{P}^{-1}$.
We put $\beta := \rho(1)$.
Then, it is also equivalent to the condition that $\beta^2 = 1$ and
the matrix representing the multiplication-by-$\beta$ map
with respect to the ordered $k$-basis $s$ is equal to $P$.
Since $P \in \SL_{n+1}(k)$, we have $\Norm_{L/k}(\beta) = 1$.
The map $P \mapsto \beta$ gives a desired isomorphism.
\end{proof}

\begin{rem}
Theorem \ref{Theorem:ClassificationSL(n+1)Orbits} was
proved by Wood and Bhargava-Gross-Wang
when $f(X_0,X_1)$ has no multiple factors over $\overline{k}$ and $(1,0) \notin S$
(\cite[Theorem 7]{BhargavaGrossWang1}, \cite[Corollary 18]{BhargavaGrossWang2}).
(The characteristic of the base field is assume to be different from two
in \cite{BhargavaGrossWang1}, \cite{BhargavaGrossWang2},
but this assumption was not used in the proof of their results corresponding to
Theorem \ref{Theorem:ClassificationSL(n+1)Orbits}.)
In the situation of \cite{BhargavaGrossWang1}, \cite{BhargavaGrossWang2},
we have $s = 0$, and the discriminant subscheme $S \subset \P^1$ is \'etale (hence it is reduced).
\end{rem}

\begin{rem}
Theorem \ref{Theorem:ClassificationSL(n+1)Orbits} only concerns
$\SL_{n+1}(k)$-orbits in $(k^2 \otimes \Sym_2 k^{n+1})_{S,\mathrm{fr}}$.
When $S$ is reduced,
Theorem \ref{Theorem:ClassificationSL(n+1)Orbits} gives
a parametrization of $\SL_{n+1}(k)$-orbits in $(k^2 \otimes \Sym_2 k^{n+1})_{S}$
by Lemma \ref{Lemma:DiscriminantReducedness}.
When $S$ is not reduced,
we have $(k^2 \otimes \Sym_2 k^{n+1})_S \neq (k^2 \otimes \Sym_2 k^{n+1})_{S,\mathrm{fr}}$,
and we do not have a reasonably explicit parametrization of
$\SL_{n+1}(k)$-orbits in the complement
$(k^2 \otimes \Sym_2 k^{n+1})_S \backslash (k^2 \otimes \Sym_2 k^{n+1})_{S,\mathrm{fr}}$.
\end{rem}

\begin{rem}
It is interesting to compare our proofs
and the proofs in \cite{BhargavaGrossWang1}, \cite{BhargavaGrossWang2}.
Basic strategies are similar, but there are some technical differences.
The proofs in \cite{BhargavaGrossWang1}, \cite{BhargavaGrossWang2} are
elementary and depend on explicit calculations on bilinear forms.
Although the arguments in \cite{BhargavaGrossWang1}, \cite{BhargavaGrossWang2}
depend on the assumption of the separability of $f(X_0,X_1)$,
it should be possible to modify their arguments
to give a more elementary proof of Theorem \ref{Theorem:ClassificationSL(n+1)Orbits}.
On the other hand, our proofs are of theoretical nature and not elementary
because we essentially use results from modern algebraic geometry.
An advantage of our methods is that they can be generalized to
classify linear orbits of $m$-tuples of symmetric matrices for $m \geq 3$
(\cite{Ishitsuka}).
Note that our proof of Theorem \ref{Theorem:ClassificationSL(n+1)Orbits} (1),(2)
does not use any explicit calculations on bilinear forms.
(Explicit calculations on bilinear forms are used only in
Lemma \ref{Lemma:ClassificationSL(n+1)Orbits:Existence}.)
\end{rem}

\section{Classification of pencils of quadrics in terms of Segre symbols}
\label{Section:SegreClassification}

In order to illustrate our results,
we shall show how to recover the classical classification of
pencils of quadrics in terms of Segre symbols.
Of course, the methods and the results in this section are more or less well-known.
We use the same notations as in Section \ref{Section:GrothendieckDualityResolution}.
Moreover, in this section, we assume that $k$
is {\it algebraically closed of characteristic different from two}.

Let us recall some definitions and results on pencils of quadrics
(for details, see \cite[Chapter XIII]{HodgePedoe}).
We consider the projective space $\P^n := \Proj k[X_0,\ldots,X_n]$ over $k$.
A {\it quadric} in $\P^n$ is a closed hypersurface $Q \subset \P^n$ defined by
$(F = 0)$ for a quadratic form $F$ in $(n+1)$-variables $X_0,\ldots,X_n$.
A {\it pencil of quadrics} is a family of quadrics in $\P^n$
parametrized by $\P^1$ defined by the equation of the form
$(a F + b G = 0)$ for $(a,b) \in \P^1$,
where $F,G$ are quadratic forms linearly independent over $k$.
We identify the pencil $(a F + b G = 0)$ $((a,b) \in \P^1)$
and the $2$-dimensional $k$-vector space $\langle F,G \rangle$ spanned by $F,G$.
Two pencils of quadrics $\langle F,G \rangle,\ \langle F',G' \rangle$
are said to be {\it projectively equivalent} if there is a linear change of variables
which sends $\langle F,G \rangle$ to $\langle F',G' \rangle$.
Since $\chara k \neq 2$, there are unique symmetric matrices
$A = (a_{i,j})_{0 \leq i,j \leq n}$ and $B = (b_{i,j})_{0 \leq i,j \leq n}$
of size $n+1$ satisfying
\[ F = \sum_{0 \leq i,j \leq n} a_{i,j} X_i X_j, \qquad
   G = \sum_{0 \leq i,j \leq n} b_{i,j} X_i X_j. \]
In this case, we say that the pencil $\langle F,G \rangle$ is defined by
the pair $(A,B)$ of symmetric matrices.
For $(a,b) \in \P^1$,
the quadric $(a F + b G = 0)$ is smooth if and only if $\det(a A + b B) \neq 0$.
Hence the pencil $\langle F,G \rangle$ contains a smooth quadric
if and only if $\det(X_0 A + X_1 B)$ does not vanish identically.
For a pencil $\langle F,G \rangle$ (resp.\ $\langle F',G' \rangle$)
defined by $(A,B)$ (resp.\ $(A',B')$),
it is easy to see that
the two pencils $\langle F,G \rangle, \langle F',G' \rangle$ are projectively equivalent
if and only if
\[ (A,B) = \big( q_{0,0} \transp{P} A' P + q_{1,0} \transp{P} B' P,\ q_{0,1} \transp{P} A' P + q_{1,1} \transp{P} B' P \big) \]
for some $P \in \GL_{n+1}(k)$ and $Q = (q_{i,j})_{0 \leq i,j \leq 1} \in \GL_2(k)$.
For a positive integer $r \geq 1$, we define symmetric matrices $\Delta_r$ and $\Lambda_r$ of size $r$ by
\[
\Delta_r :=
\begin{pmatrix}
 & & & & 1 \\
 & & & 1 \\
 & & \mathrm{\reflectbox{$\ddots$}} \\
 & 1 & & \\
1 & &  &
\end{pmatrix},
\qquad 
\Lambda_r :=
\begin{pmatrix}
 & & & & 0 \\
 & & & 0 & 1 \\
 & & \mathrm{\reflectbox{$\ddots$}} & \mathrm{\reflectbox{$\ddots$}} \\
 & 0 & 1 & & \\
0 & 1 & &  &
\end{pmatrix}.
\]
For square matrices $A_0,\ldots,A_s$, the block diagonal matrix with blocks $A_0,\ldots,A_s$
is denoted by $\bigoplus_{i=0}^s A_i := A_0 \oplus \cdots \oplus A_s$.

The following theorem gives a classification of pencils of quadrics
containing a smooth quadric up to projective equivalence.
For a more traditional proof of this theorem, see  \cite[Chapter XIII, \S 10]{HodgePedoe}.

\begin{thm}
\label{Theorem:Segre}
\begin{enumerate}
\item Let $\langle F,G \rangle$ be a pencil of quadrics containing a smooth quadric.
Then there are different $k$-rational points $(u_0,v_1),\ldots,(u_r,v_r) \in \P^1$
and positive integers $h_i, e_{i,j} \geq 1 \ (0 \leq i \leq r,\ 0 \leq j \leq h_i-1)$
such that
\[ e_{i,0} \geq \cdots \geq e_{i,h_{i-1}} \quad (0 \leq i \leq r), \qquad \sum_{i=0}^r \sum_{j=0}^{h_i-1} e_{i,j} = n+1, \]
and the pencil $\langle F,G \rangle$ is projectively equivalent
to the pencil defined by the pair $(M_0,M_1)$, where
\[
  M_0 := \bigoplus_{i=0}^{r} \bigoplus_{j=0}^{h_i-1} \, ( v_i \Delta_{e_{i,j}} + \Lambda_{e_{i,j}}),
\qquad
  M_1 := \bigoplus_{i=0}^{r} \bigoplus_{j=0}^{h_i-1} \, (-u_i) \Delta_{e_{i,j}}.
\]

\item Let $(M_0,M_1)$ (resp.\ $(M'_0,M'_1)$) be a pair of symmetric matrices
corresponding to the tuple $(r,(u_i,v_i),h_i,e_{i,j})$ (resp.\ $(r',(u'_i,v'_i),h'_i,e'_{i,j})$) in (1).
Then, the pencils defined by $(M_0,M_1), (M'_0,M'_1)$ are projectively equivalent
if and only if $r = r'$,
\ $\sigma(u_i,v_i) = (u'_{\tau(i)}, v'_{\tau(i)})$,
\ $h_i = h'_{\tau(i)}$,
\ $e_{i,j} = e'_{\tau(i),j}$
for an automorphism $\sigma \colon \P^1 \overset{\sim}{\longrightarrow} \P^1$ and
a permutation $\tau \colon \{ 0,1,\ldots,r \} \overset{\sim}{\longrightarrow} \{ 0,1,\ldots,r \}$.
\end{enumerate}
\end{thm}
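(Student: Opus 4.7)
The plan is to combine Corollary \ref{Corollary:MainTheorem:AlgebraicallyClosed} with the residual action of $\GL_2(k)$ changing the basis of the pencil, then reduce everything to the structure theorem for modules over a finite principal ideal $k$-algebra. Projective equivalence is the action of $\GL_{n+1}(k) \times \GL_2(k)$, and the $\GL_2(k)$-factor induces the standard $\PGL_2$-action on $\P^1$ that transports the discriminant subscheme and its associated coherent sheaf. Quotienting first by $\GL_{n+1}(k)$ via Corollary \ref{Corollary:MainTheorem:AlgebraicallyClosed}, I identify the set of projective equivalence classes of pencils containing a smooth quadric with the set of pairs $(S, \mathcal{F})$, where $S \hookrightarrow \P^1$ is zero-dimensional of length $n+1$ and $\mathcal{F}$ is a coherent $\O_S$-module satisfying the length condition, modulo the $\PGL_2$-action on $\P^1$.

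Over the algebraically closed field $k$, the scheme $S$ decomposes as a disjoint union $\bigsqcup_{i=0}^r \Spec(k[T_i]/(T_i^{m_i}))$ indexed by the distinct closed points $(u_i : v_i) \in \P^1$ with multiplicities $m_i \geq 1$ satisfying $\sum_i m_i = n+1$, and $\mathcal{F}$ splits correspondingly as $\bigoplus_i \mathcal{F}_i$. By the structure theorem for finitely generated modules over $k[T]/(T^{m_i})$ (see \cite[Chapter VII, \S 4.9]{BourbakiAlgebraII}), each $\mathcal{F}_i$ admits a unique decomposition $\mathcal{F}_i \cong \bigoplus_{j=0}^{h_i-1} k[T_i]/(T_i^{e_{i,j}})$ with $e_{i,0} \geq \cdots \geq e_{i, h_i-1}$ and $\sum_j e_{i,j} = m_i$. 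This produces the combinatorial invariants of part (1), and the uniqueness clause of Bourbaki together with the fact that $\PGL_2$ only permutes the points $(u_i : v_i)$ establishes part (2).

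For existence in part (1), I would verify that the single block $(v_i \Delta_e + \Lambda_e,\, -u_i \Delta_e)$ realizes the cyclic summand $k[T]/(T^e)$ at the point $(u_i : v_i)$. Iterated cofactor expansion along the first row gives $\det\big((v_i X_0 - u_i X_1)\Delta_e + X_0 \Lambda_e\big) = (-1)^{e(e-1)/2}(v_i X_0 - u_i X_1)^e$, so the block-diagonal pair built from all summands has the prescribed discriminant polynomial and hence the prescribed subscheme $S$. After a preliminary $\PGL_2$-translation moving every closed point of $S$ into the affine chart $X_0 \neq 0$ (so that $u_i \neq 0$ for all $i$), the specialization of $X_0 M_0 + X_1 M_1$ at $(u_i : v_i)$ equals $u_i \Lambda_e$, which has rank exactly $e-1$ because the top row of $\Lambda_e$ vanishes; by Nakayama, the local cokernel over $\O_{S,(u_i : v_i)} \cong k[T]/(T^e)$ is cyclic and hence isomorphic to $k[T]/(T^e)$. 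Taking block direct sums realizes any prescribed $(S, \mathcal{F})$, which together with Corollary \ref{Corollary:MainTheorem:AlgebraicallyClosed} proves part (1). The main obstacle I anticipate is precisely the cokernel verification at points with $u_i = 0$: the naive specialization returns $M_1 = 0$ and the cokernel ceases to be cyclic, so without the preliminary normalization one must instead work in the chart $X_1 \neq 0$ and adapt the block to keep the local cokernel cyclic — this is the one piece of genuine linear algebra the proof requires.
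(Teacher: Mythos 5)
Your approach matches the paper's: reduce via Corollary \ref{Corollary:MainTheorem:AlgebraicallyClosed}, decompose $\iota_{\ast}\mathcal{F}$ using the Bourbaki structure theorem over $k[T]/(T^m)$, and realize each cyclic summand by the block $(v\Delta_e + \Lambda_e,\,-u\Delta_e)$, with uniqueness of part (2) also coming from Bourbaki together with the residual $\PGL_2$-action. Your caution about points with $u_i = 0$ is not merely stylistic but genuinely necessary: the paper's one-line assertion that the block is free of rank one ``by Lemma \ref{Lemma:FreenessCriterion}'' is literally false when $u = 0$ and $e \geq 2$, since then $\widetilde{M}_1 = -u\Delta_e = 0$, the specialization at $(0:1)$ vanishes identically, and the local cokernel over $k[T]/(T^e)$ is $k^{\oplus e}$ rather than the cyclic module $k[T]/(T^e)$; the preliminary $\PGL_2$-translation you introduce is exactly the normalization the paper's proof tacitly needs.
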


\begin{proof}
Before giving the proof, let us consider the pair
\[ \widetilde{M} := (\, \widetilde{M}_0,\,\widetilde{M}_1\,) := (v \Delta_{e} + \Lambda_{e},\,- u \Delta_{e}) \]
for $(u,v) \in \P^1$ and $e \geq 1$.
Let $\iota \colon S := D_{\widetilde{M}} \hookrightarrow \P^1$
be the discriminant subscheme.
Since
\[ \disc(\widetilde{M}) := \det\big( X_0 \widetilde{M}_0 + X_1 \widetilde{M}_1 \big) = (-1)^{(e-1)e/2} (v X_0 - u X_1)^e, \]
the scheme $S$ has only one $k$-rational point $(u,v) \in \P^1$.
Let $\widetilde{\mathcal{F}}_{(u,v),e}$ be the coherent $\O_S$-module corresponding to $\widetilde{M}$
by Corollary \ref{Corollary:MainTheorem:AlgebraicallyClosed}.
It is free of rank one by Lemma \ref{Lemma:FreenessCriterion}.

\vspace{0.1in}

\noindent
(1) \ Assume that the pencil $\langle F,G \rangle$ is defined by a pair $(A,B)$.
Let $\mathcal{F}$ be the coherent sheaf corresponding to the pair $(A,B)$
by Corollary \ref{Corollary:MainTheorem:AlgebraicallyClosed}.
The direct image $\iota_{\ast} \mathcal{F}$
can be written as a direct sum of coherent $\O_{\P^1}$-modules of
the form $\iota_{\ast} (\widetilde{\mathcal{F}}_{(u,v),e})$
(\cite[Chapter VII, \S 4.9]{BourbakiAlgebraII}).
Hence the assertion (1) follows.

\vspace{0.1in}

\noindent
(2) \ Let $\mathcal{F}$ (resp.\ $\mathcal{F}'$)
be a coherent sheaf corresponding to $(M_0,M_1)$ (resp.\ $(M'_0,M'_1)$)
by Corollary \ref{Corollary:MainTheorem:AlgebraicallyClosed}.
The pencils defined by $(M_0,M_1), (M'_0,M'_1)$ are projectively equivalent
if and only if $\iota_{\ast} \mathcal{F} \cong \sigma^{\ast} (\iota'_{\ast} \mathcal{F}')$
for an automorphism $\sigma \colon \P^1 \overset{\sim}{\longrightarrow} \P^1$.
Since $h_i,e_{i,j}$ (resp.\ $h'_i,e'_{i,j}$)
are uniquely determined by the isomorphism class of $\iota_{\ast} \mathcal{F}$
(resp.\ $\iota'_{\ast} \mathcal{F}'$)
by \cite[Chapter VII, \S 4.9]{BourbakiAlgebraII}.
The assertion (2) follows.
\end{proof}

\begin{rem}
For a pair $(M_0,M_1)$ as in Theorem \ref{Theorem:Segre} (1),
we have
$\det(X_0 M_0 + X_1 M_1) = \pm \prod_{i=0}^{r} \prod_{j=0}^{h_i-1} (v_i X_0 - u_i X_1)^{e_{i,j}}$.
The factors $(v_i X_0 - u_i X_1)^{e_{i,j}}$ are called {\it elementary divisors}.
Traditionally, the integers $e_{i,j}$ are denoted by the following symbol:
\[ [\, (e_{0,0},\ldots,e_{0,h_0-1}),\, \ldots,\, (e_{r,0},\ldots,e_{r,h_r-1}) \,]. \]
This symbol is called the {\it Segre symbol}.
When $h_i = 1$, we omit the parentheses 
and write ``$e_{i,0}$'' instead of ``$(e_{i,0})$''.
For example, when $h_i = 1$ for all $i$,
the Segre symbol $[(e_{0,0}),\ldots,(e_{r,0})]$
is also denoted by $[e_{0,0},\ldots,e_{r,0}]$.
Pencils of quadrics with Segre symbol $[e_{0,0},\ldots,e_{r,0}]$ correspond to
$\GL_{n+1}(k)$-orbits in $(k^2 \otimes \Sym_2 k^{n+1})_{S,\mathrm{fr}}$.
(See Section \ref{Section:WoodBhargavaGrossWang}.)
\end{rem}

\begin{rem}
Theorem \ref{Theorem:MainTheorem} and Theorem \ref{Theorem:Segre}
are equivalent when $\chara k \neq 2$.
However, when $\chara k = 2$, Theorem \ref{Theorem:Segre} {\it does not hold} because
Corollary \ref{Corollary:MainTheorem:AlgebraicallyClosed} is false in characteristic two.
On the other hand, Theorem \ref{Theorem:MainTheorem} remains true in arbitrary characteristics.
\end{rem}

\section{Classifications of pairs of square matrices which are not necessarily symmetric}
\label{Section:ClassificationPairs:NonSymmetric}

In this section, we give classifications of linear orbits of
pairs of square matrices which are not necessarily symmetric.
The arguments are basically the same as the case of symmetric matrices.
As in Section \ref{Section:GrothendieckDualityResolution},
we fix a field $k$, a positive integer $n \geq 1$,
and a closed subscheme $\iota \colon S \hookrightarrow \P^1$
which is finite over $k$ and satisfies $\dim_k H^0(S,\O_S) = n+1$.
We also fix an isomorphism
$c \colon (\iota^{!}\O_{\P^1}(-1))[1] \overset{\sim}{\longrightarrow} \O_S$
of $\O_S$-modules.

The set of pairs of square matrices of size $n+1$ with entries in $k$ is denoted by
\[ k^2 \otimes k^{n+1} \otimes k^{n+1} :=
   \big\{ M = (M_0,M_1) \ \big| \  M_i \in \mathrm{Mat}_{n+1}(k) \ (i=0,1) \big\}, \]
where the matrices $M_0,M_1$ are not necessarily symmetric.
The following notations are the same as before.
For a pair
$M = (M_0,M_1) \in k^2 \otimes k^{n+1} \otimes k^{n+1}$,
the {\it discriminant polynomial} is defined by
\[ \disc(M) := \det(X_0 M_0 + X_1 M_1). \]
Assume that $\disc(M) \neq 0$.
The {\it discriminant subscheme} is defined by
\[ D_M := (\disc(M) = 0) \subset \P^1. \]
The right action of $P \in \GL_{n+1}(k)$ on $k^2 \otimes k^{n+1} \otimes k^{n+1}$
is defined by
\[ M \cdot P := (\, \transp{P} M_0 P,\, \transp{P} M_1 P\,). \]
Since $\disc(M \cdot P) = (\det P)^2 \disc(M)$,
the action of $\GL_{n+1}(k)$ (resp.\ $\SL_{n+1}(k)$)
preserves discriminant subschemes (resp.\ discriminant polynomials).
Let
\[ (k^2 \otimes k^{n+1} \otimes k^{n+1})_S \subset k^2 \otimes k^{n+1} \otimes k^{n+1} \]
be the subset consisting of pairs whose discriminant subschemes are $S$.

As in the case of symmetric matrices, we shall first prove a rigidified bijection.
The strategy of the proof of the following proposition is
basically the same as Proposition \ref{Proposition:Key}.
For details, see the proof of Proposition \ref{Proposition:Key}.

\begin{prop}
\label{Proposition:Key:NonSymmetric}
\begin{enumerate}
\item There is a bijection between the following sets.
\begin{itemize}
\item The set $(k^2 \otimes k^{n+1} \otimes k^{n+1})_S$.
\item The set of equivalence classes of triples $(\mathcal{F},s,t)$,
where
\begin{itemize}
\item $\mathcal{F}$ is a coherent $\O_{S}$-module with
$\length_{\O_{S,x}} \O_{S,x} = \length_{\O_{S,x}} {\mathcal{F}}_{x}$
for each $x \in S$,
\item $s = \{ s_0,s_1,\ldots,s_n \}$ is an ordered $k$-basis of $H^0(S,\mathcal{F})$, and
\item $t = \{ t_0,t_1,\ldots,t_n \}$ is an ordered $k$-basis of $H^0(S,\mathcal{F}^{\vee})$.
\end{itemize}
Here two triples $(\mathcal{F},s,t),(\mathcal{F}',s',t')$
are equivalent if there is an isomorphism
$\rho \colon \mathcal{F} \overset{\sim}{\longrightarrow} \mathcal{F}'$
of $\O_{S}$-modules with $\rho(s)  = s'$
and $(\transp{\rho})(t')  = t$, where
$\transp{\rho} \colon (\mathcal{F}')^{\vee} \overset{\sim}{\longrightarrow} \mathcal{F}^{\vee}$
is the transpose of $\rho$.
(From the first condition,
we have $\dim_k H^0(S,\mathcal{F}) = \dim_k H^0(S,\O_S) = n+1$.
Since $\mathcal{F}$ is isomorphic to $\mathcal{F}^{\vee}$
by Lemma \ref{Lemma:Duality:SelfDual},
we have $\dim_k H^0(S,\mathcal{F}^{\vee}) = n+1$.)
\end{itemize}

\item Let $M = (M_0,M_1) \in (k^2 \otimes k^{n+1} \otimes k^{n+1})_S$
be a pair corresponding to a triple $(\mathcal{F},s,t)$.
For an invertible matrix $P \in \GL_{n+1}(k)$,
the pair $M \cdot P$ corresponds to
the triple $(\mathcal{F},\,s\,\transp{P}^{-1},\,t\,\transp{P}^{-1})$ by (1).
(For the definition of the ordered $k$-bases
$s\,\transp{P}^{-1}$ and $t\,\transp{P}^{-1}$,
see Proposition \ref{Proposition:Key} (2).)
\end{enumerate}
\end{prop}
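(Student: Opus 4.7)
The plan is to run the same argument as in Proposition \ref{Proposition:Key}, but with two independent ordered bases $s, t$ of $H^0(S, \mathcal{F})$ and $H^0(S, \mathcal{F}^\vee)$ replacing, respectively, the basis $s$ and the symmetric isomorphism $\lambda$. The reason this works is that in the symmetric case the coincidence $M = \transp{M}$ forces the defining resolution of $\iota_{\ast}\mathcal{F}$ and its $\Homsheaf(-, \O_{\P^1}(-1))$-dual to agree up to the datum $\lambda$; without symmetry, these two resolutions are genuinely distinct and the extra data needed to reconstruct $M$ is carried by $t$.

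For the forward direction, starting from $M = (M_0, M_1) \in (k^2 \otimes k^{n+1} \otimes k^{n+1})_S$ I would form, exactly as in the proof of Proposition \ref{Proposition:Key}, the short exact sequence
\[
0 \to \O_{\P^1}(-1)^{\oplus(n+1)} \xrightarrow{M} \O_{\P^1}^{\oplus(n+1)} \to \iota_{\ast}\mathcal{F} \to 0,
\]
producing $\mathcal{F}$ (with the required length condition) and the ordered $k$-basis $s$ of $H^0(S, \mathcal{F})$ as the image of the standard basis. Applying the left derived functor of $\mathcal{G} \mapsto \Homsheaf_{\P^1}(\mathcal{G}, \O_{\P^1}(-1))$ and invoking Lemma \ref{Lemma:Duality:Ext} with $c$ produces
\[
0 \to \O_{\P^1}(-1)^{\oplus(n+1)} \xrightarrow{\transp{M}} \O_{\P^1}^{\oplus(n+1)} \to \iota_{\ast}(\mathcal{F}^\vee) \to 0,
\]
from which we read off the ordered $k$-basis $t$ of $H^0(S, \mathcal{F}^\vee)$. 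In contrast with Proposition \ref{Proposition:Key}, where these two sequences were identified using $\lambda$, we simply retain $s$ and $t$ as independent data.

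For the converse, given $(\mathcal{F}, s, t)$, the basis $s$ yields via Lemma \ref{Lemma:Resolution} a surjection $p \colon \O_{\P^1}^{\oplus(n+1)} \to \iota_{\ast}\mathcal{F}$ whose kernel $K := \Ker p$ is locally free and abstractly isomorphic to $\O_{\P^1}(-1)^{\oplus(n+1)}$. Dualizing and using Lemma \ref{Lemma:Duality:Ext} with $c$ gives
\[
0 \to \O_{\P^1}(-1)^{\oplus(n+1)} \to \Homsheaf_{\P^1}(K, \O_{\P^1}(-1)) \to \iota_{\ast}(\mathcal{F}^\vee) \to 0.
\]
Since $H^0(\P^1, \O_{\P^1}(-1)) = H^1(\P^1, \O_{\P^1}(-1)) = 0$, passing to global sections yields an isomorphism $H^0(\P^1, \Homsheaf_{\P^1}(K, \O_{\P^1}(-1))) \cong H^0(S, \mathcal{F}^\vee)$, and $t$ lifts uniquely to a basis on the left. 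Both $\O_{\P^1}^{\oplus(n+1)}$ and $\Homsheaf_{\P^1}(K, \O_{\P^1}(-1))$ are trivial bundles of rank $n+1$ on $\P^1$, and any morphism between such bundles that is an isomorphism on $H^0$ is an isomorphism of sheaves; thus $t$ determines a distinguished isomorphism $\O_{\P^1}^{\oplus(n+1)} \cong \Homsheaf_{\P^1}(K, \O_{\P^1}(-1))$, which by applying $\Homsheaf_{\P^1}(-, \O_{\P^1}(-1))$ and the canonical double-dual identification pins down a specific isomorphism $K \cong \O_{\P^1}(-1)^{\oplus(n+1)}$. Composing with $K \hookrightarrow \O_{\P^1}^{\oplus(n+1)}$ produces the pair $M$.

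Verifying that these two constructions are mutually inverse is a diagram chase formally parallel to the one in the proof of Proposition \ref{Proposition:Key}, as is the equivariance claim (2): the right action of $P \in \GL_{n+1}(k)$ is carried through both short exact sequences above and simultaneously transforms each of the standard bases, hence both $s$ and $t$, by $\transp{P}^{-1}$. The main subtlety in the entire argument is the rigidification step in the converse direction, where one needs that the isomorphism $K \cong \O_{\P^1}(-1)^{\oplus(n+1)}$ produced from $t$ is the unique one compatible with the dualized resolution on global sections; however, this uniqueness is forced by the same vanishing $H^0(\P^1, \O_{\P^1}(-1)) = 0$ that appears throughout the proof of Proposition \ref{Proposition:Key}.
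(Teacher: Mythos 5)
Your proposal is correct and follows essentially the same route as the paper: the forward direction reads off $s$ and $t$ from the two resolutions by $M$ and $\transp{M}$, and the converse rigidifies the kernel resolution using the lifted basis $t$ to pin down a specific isomorphism $\Ker p \cong \O_{\P^1}(-1)^{\oplus(n+1)}$, exactly as the paper does by invoking that $\Aut(\O_{\P^1}^{\oplus(n+1)}) = \GL_{n+1}(k)$. One minor imprecision: the vanishing $H^0(\P^1,\O_{\P^1}(-1))=0$ is what makes the lift of $t$ to global sections of $\Homsheaf_{\P^1}(\Ker p,\O_{\P^1}(-1))$ unique, while the uniqueness of the isomorphism of trivial bundles matching that basis rests on $H^0(\P^1,\O_{\P^1})=k$ (so that endomorphisms of $\O_{\P^1}^{\oplus(n+1)}$ are constant matrices), but both facts are elementary and your argument does use them correctly in substance.
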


\begin{proof}
(1) \ For a pair $M = (M_0,M_1) \in (k^2 \otimes k^{n+1} \otimes k^{n+1})_S$,
we shall construct a triple $(\mathcal{F},s,t)$ as follows.
We define a coherent $\O_{\P^1}$-module $\mathcal{F}_0$ by
\begin{equation}
\label{Proposition:Key:NonSymmetric:ShortExactSequence1}
\begin{CD}
0 @>>> \O_{\P^1}(-1)^{\oplus (n+1)}
  @>{M}>> \O_{\P^1}^{\oplus (n+1)}
  @>{p}>> \mathcal{F}_0 @>>> 0.
\end{CD}
\end{equation}
We put $\mathcal{F} := \iota^{\ast} \mathcal{F}_0$.
Then we have $\mathcal{F}_0 = \iota_{\ast} \mathcal{F}$.
The equality $\length_{\O_{S,x}} \O_{S,x} = \length_{\O_{S,x}} {\mathcal{F}}_{x}$
for each $x \in S$
is proved by the same way as in Proposition \ref{Proposition:Key}.
We denote the image of the standard $k$-basis of $H^0(\P^1,\O_{\P^1}^{\oplus (n+1)})$
by $s = \{ s_0,s_1,\ldots,s_n \}$.
Applying the left derived functor of
$\mathcal{G} \mapsto \Homsheaf_{\P^1}(\mathcal{G},\O_{\P^1}(-1))$
to (\ref{Proposition:Key:NonSymmetric:ShortExactSequence1}), we get
\begin{equation}
\label{Proposition:Key:NonSymmetric:ShortExactSequence2}
\begin{CD}
0 @>>> \O_{\P^1}(-1)^{\oplus (n+1)}
  @>{\transp{M}}>> \O_{\P^1}^{\oplus (n+1)}
  @>{p^{\vee}}>> \Extsheaf^1_{\P^1}(\iota_{\ast} \mathcal{F},\,\O_{\P^1}(-1)) \cong \iota_{\ast}(\mathcal{F}^{\vee})
 @>>> 0.
\end{CD}
\end{equation}
Here we use Lemma \ref{Lemma:Duality:Ext} and the fixed isomorphism $c$.
The image of the standard $k$-basis of $H^0(\P^1,\O_{\P^1}^{\oplus (n+1)})$
by $p^{\vee}$ is denoted by $t = \{ t_0,t_1,\ldots,t_n \}$.

Conversely, assume that a triple $(\mathcal{F},s,t)$ is given.
We define a surjection
$p \colon \O_{\P^1}^{\oplus (n+1)} \longrightarrow \iota_{\ast} \mathcal{F}$
which sends the standard $k$-basis of $H^0(\P^1,\O_{\P^1}^{\oplus (n+1)})$
to the ordered $k$-basis $s = \{ s_0,s_1,\ldots,s_n \}$ of $H^0(S,\mathcal{F})$.
The kernel $\Ker p$ is isomorphic to $\O_{\P^1}(-1)^{\oplus (n+1)}$
by Lemma \ref{Lemma:Resolution}.
Applying the left derived functor of
$\mathcal{G} \mapsto \Homsheaf_{\P^1}(\mathcal{G},\O_{\P^1}(-1))$
to
\begin{equation}
\label{Proposition:Key:NonSymmetric:ShortExactSequence3}
\begin{CD}
0 @>>> \Ker p
  @>>> \O_{\P^1}^{\oplus (n+1)}
  @>{p}>> \iota_{\ast} \mathcal{F} @>>> 0,
\end{CD}
\end{equation}
we get the following short exact sequence
\begin{equation}
\label{Proposition:Key:NonSymmetric:ShortExactSequence4}
\begin{CD}
0 @>>> \O_{\P^1}(-1)^{\oplus (n+1)}
  @>>> \Homsheaf_{\P^1}(\Ker p,\O_{\P^1}(-1))
  @>{p^{\vee}}>> \iota_{\ast}(\mathcal{F}^{\vee})
 @>>> 0.
\end{CD}
\end{equation}
Since the automorphism group of $\O_{\P^1}^{\oplus (n+1)}$ is
isomorphic to $\GL_{n+1}(k)$,
there is a {\it unique} isomorphism
\[ \Homsheaf_{\P^1}(\Ker p,\O_{\P^1}(-1)) \cong \O_{\P^1}^{\oplus (n+1)} \]
such that the morphism $p^{\vee}$ in
(\ref{Proposition:Key:NonSymmetric:ShortExactSequence4})
sends the standard $k$-basis of 
$H^0(\P^1,\O_{\P^1}^{\oplus (n+1)})$
to the given ordered $k$-basis $t = \{ t_0,t_1,\ldots,t_n \}$ of $H^0(S,\mathcal{F}^{\vee})$.
Using it, we get an isomorphism $\Ker p \cong \O_{\P^1}(-1)^{\oplus (n+1)}$.
Hence we have the short exact sequence
\begin{equation}
\label{Proposition:Key:NonSymmetric:ShortExactSequence5}
\begin{CD}
0 @>>> \Ker p \cong \O_{\P^1}(-1)^{\oplus (n+1)}
  @>>> \O_{\P^1}^{\oplus (n+1)}
  @>{p}>> \iota_{\ast} \mathcal{F} @>>> 0,
\end{CD}
\end{equation}
which gives a pair $M$ of square matrices of size $n+1$.

If two triples $(\mathcal{F},s,t)$, $(\mathcal{F}',s',t')$
are equivalent, the matrices obtained from them are the same.
It is easy to see that two maps
$M \mapsto (\mathcal{F},s,t)$ and $(\mathcal{F},s,t) \mapsto M$
constructed as above give a desired bijection.

\vspace{0.1in}

\noindent
(2) \ Let us consider the following commutative diagram
\begin{equation}
\label{Proposition:Key:NonSymmetric:CommutativeDiagram1}
\begin{CD}
0 @>>> \O_{\P^1}(-1)^{\oplus (n+1)}
  @>{M}>> \O_{\P^1}^{\oplus (n+1)}
  @>{p}>> \iota_{\ast} \mathcal{F} @>>> 0 \\
& & @AA{\cong, P}A @AA{\cong, \transp{P}^{-1}}A @AA{\mathrm{id}}A \\
0 @>>> \O_{\P^1}(-1)^{\oplus (n+1)}
  @>{M \cdot P}>> \O_{\P^1}^{\oplus (n+1)} @>{p'}>> \iota_{\ast} \mathcal{F} @>>> 0,
\end{CD}
\end{equation}
where the morphism $p'$ sends the standard $k$-basis of
$H^0(\P^1,\O_{\P^1}^{\oplus (n+1)})$
to the ordered $k$-basis $s\,\transp{P}^{-1}$ of $H^0(S,\mathcal{F})$.
In order to see the effect on $t$,
we apply the left derived functor of
$\mathcal{G} \mapsto \Homsheaf_{\P^1}(\mathcal{G},\O_{\P^1}(-1))$
to (\ref{Proposition:Key:NonSymmetric:CommutativeDiagram1}),
and get
\[
\begin{CD}
0 @>>> \O_{\P^1}(-1)^{\oplus (n+1)}
  @>{\transp{M}}>> \O_{\P^1}^{\oplus (n+1)}
  @>{p^{\vee}}>> \Extsheaf^1_{\P^1}(\iota_{\ast} \mathcal{F},\,\O_{\P^1}(-1)) \cong \iota_{\ast}(\mathcal{F}^{\vee}) @>>> 0 \\
& & @VV{\cong, P^{-1}}V @VV{\cong, \transp{P}}V @VV{\mathrm{id}}V \\
0 @>>> \O_{\P^1}(-1)^{\oplus (n+1)}
  @>{\transp{M} \cdot P}>> \O_{\P^1}^{\oplus (n+1)} @>{(p')^{\vee}}>> \Extsheaf^1_{\P^1}(\iota_{\ast} \mathcal{F},\,\O_{\P^1}(-1)) \cong \iota_{\ast}(\mathcal{F}^{\vee}) @>>> 0.
\end{CD}
\]
Therefore, the morphism $(p')^{\vee}$ sends the standard $k$-basis of
$H^0(\P^1,\O_{\P^1}^{\oplus (n+1)})$
to the ordered $k$-basis $t\,\transp{P}^{-1}$ of $H^0(S,\mathcal{F}^{\vee})$.
\end{proof}

It is straightforward to prove bijections for
$\GL_{n+1}(k)$-orbits and $\SL_{n+1}(k)$-orbits
using Proposition \ref{Proposition:Key:NonSymmetric}.

\begin{thm}
\label{Theorem:MainTheorem:NonSymmetric}
There is a bijection between the following sets.
\begin{itemize}
\item The set of $\GL_{n+1}(k)$-orbits in $(k^2 \otimes k^{n+1} \otimes k^{n+1})_S$.
\item The set of equivalence classes of pairs $(\mathcal{F},\psi)$,
where
\begin{itemize}
\item $\mathcal{F}$ is a coherent $\O_S$-module with
$\length_{\O_{S,x}} \O_{S,x} = \length_{\O_{S,x}} {\mathcal{F}}_{x}$
for each $x \in S$, and
\item $\psi \colon H^0(S,\mathcal{F}) \overset{\sim}{\longrightarrow} H^0(S,\mathcal{F}^{\vee})$
is an isomorphism of $k$-vector spaces.
\end{itemize}
Here two pairs $(\mathcal{F},\psi),(\mathcal{F}',\psi')$
are equivalent if there is an isomorphism
$\rho \colon \mathcal{F} \overset{\sim}{\longrightarrow} \mathcal{F}'$
of $\O_S$-modules satisfying $\psi = H^0(\transp{\rho}) \circ \psi' \circ H^0(\rho)$,
where
$H^0(\rho) \colon H^0(S,\mathcal{F}) \overset{\sim}{\longrightarrow} H^0(S,\mathcal{F}')$
(resp.\ $H^0(\transp{\rho}) \colon H^0(S,(\mathcal{F}')^{\vee}) \overset{\sim}{\longrightarrow} H^0(S,\mathcal{F}^{\vee})$)
is the isomorphism between global sections induced by $\rho$
(resp.\ $\transp{\rho}$).
\end{itemize}
\end{thm}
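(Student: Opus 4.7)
The plan is to deduce Theorem \ref{Theorem:MainTheorem:NonSymmetric} from Proposition \ref{Proposition:Key:NonSymmetric} by quotienting out the rigidification (the choice of ordered $k$-bases $s,t$) by the $\GL_{n+1}(k)$-action. The key observation is that a triple $(\mathcal{F},s,t)$ encodes exactly the data of $\mathcal{F}$ together with a $k$-linear isomorphism $\psi\colon H^0(S,\mathcal{F})\overset{\sim}{\longrightarrow} H^0(S,\mathcal{F}^{\vee})$, namely the unique $\psi$ satisfying $\psi(s_i)=t_i$ for $0\le i\le n$.

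First I would construct the map from $\GL_{n+1}(k)$-orbits to equivalence classes of pairs $(\mathcal{F},\psi)$. Given $M\in(k^2\otimes k^{n+1}\otimes k^{n+1})_S$, take the triple $(\mathcal{F},s,t)$ corresponding to $M$ by Proposition \ref{Proposition:Key:NonSymmetric}(1), and define $\psi$ by $\psi(s_i)=t_i$. By Proposition \ref{Proposition:Key:NonSymmetric}(2), replacing $M$ by $M\cdot P$ for $P\in\GL_{n+1}(k)$ replaces $(s,t)$ with $(s\,\transp{P}^{-1},\,t\,\transp{P}^{-1})$; since $(s\,\transp{P}^{-1})_i=\sum_j(\transp{P}^{-1})_{j,i}s_j$ and likewise for $t$, the condition $\psi((s\,\transp{P}^{-1})_i)=(t\,\transp{P}^{-1})_i$ is automatic from $\psi(s_j)=t_j$. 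Hence $\psi$ depends only on the $\GL_{n+1}(k)$-orbit of $M$ (given a fixed choice of $\mathcal{F}$).

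Next I would construct the inverse map. Given $(\mathcal{F},\psi)$, choose any ordered $k$-basis $s=\{s_0,\ldots,s_n\}$ of $H^0(S,\mathcal{F})$ and set $t:=\psi(s)$; this produces a triple $(\mathcal{F},s,t)$, which by Proposition \ref{Proposition:Key:NonSymmetric}(1) gives a pair $M$. A different choice $s'$ of ordered basis is related to $s$ by $s'=s\,\transp{P}^{-1}$ for a unique $P\in\GL_{n+1}(k)$, and then $t'=\psi(s')=t\,\transp{P}^{-1}$, so the resulting pair is $M\cdot P$; hence the $\GL_{n+1}(k)$-orbit of $M$ is well-defined.

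Finally I would check that these constructions descend to the equivalence relations and are mutually inverse. Two triples $(\mathcal{F},s,t)$, $(\mathcal{F}',s',t')$ are equivalent in the sense of Proposition \ref{Proposition:Key:NonSymmetric} precisely when there exists $\rho\colon\mathcal{F}\overset{\sim}{\longrightarrow}\mathcal{F}'$ with $H^0(\rho)(s)=s'$ and $H^0(\transp{\rho})(t')=t$; combined with the definitions $\psi(s_i)=t_i$ and $\psi'(s'_i)=t'_i$, this translates (after a direct unravelling at the level of ordered bases) to the identity $\psi=H^0(\transp{\rho})\circ\psi'\circ H^0(\rho)$, which is exactly the equivalence relation for pairs $(\mathcal{F},\psi)$. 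Combining this with the bijection of Proposition \ref{Proposition:Key:NonSymmetric} and the preceding paragraphs yields the theorem. The only point that requires care—and the main, though modest, obstacle—is verifying that the compatibility conditions on $(s,t)$ under $\rho$ match precisely the intertwining relation $\psi=H^0(\transp{\rho})\circ\psi'\circ H^0(\rho)$; this is a straightforward but careful diagram chase using that $\rho(s)=s'$ and $\transp{\rho}(t')=t$ determine $\rho$ and $\transp{\rho}$ on all of $H^0(S,\mathcal{F})$ and $H^0(S,(\mathcal{F}')^{\vee})$ respectively by $k$-linearity.
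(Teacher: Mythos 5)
Your proposal is correct and follows essentially the same approach as the paper, which itself only sketches this proof by reference to the argument for Theorem \ref{Theorem:MainTheorem}: one passes from a triple $(\mathcal{F},s,t)$ of Proposition \ref{Proposition:Key:NonSymmetric} to the pair $(\mathcal{F},\psi)$ via $\psi(s_i)=t_i$, then uses part (2) of that proposition to descend the rigidified bijection to $\GL_{n+1}(k)$-orbits. The one place you elide — that in the converse direction the relation $\psi=H^0(\transp{\rho})\circ\psi'\circ H^0(\rho)$ together with a choice of $P$ satisfying $\rho(s\,\transp{P}^{-1})=s'$ automatically forces $(\transp{\rho})(t')=t\,\transp{P}^{-1}$ — is exactly the ``direct unravelling'' you flag, and it does go through.
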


\begin{proof}
We only give a brief sketch of the proof because
the proof of this theorem is similar to the proof of Theorem \ref{Theorem:MainTheorem}.
For $M \in (k^2 \otimes k^{n+1} \otimes k^{n+1})_S$,
let $(\mathcal{F},s,t)$ be a triple corresponding to $M$
by Proposition \ref{Proposition:Key:NonSymmetric}.
We define an isomorphism
$\psi \colon H^0(S,\mathcal{F}) \overset{\sim}{\longrightarrow} H^0(S,\mathcal{F}^{\vee})$
of $k$-vector spaces by $\psi(s_i) = t_i$ for each $i$.
Similarly, for another pair $M' \in (k^2 \otimes k^{n+1} \otimes k^{n+1})_S$,
we get a pair $(\mathcal{F}',\psi')$.
Using Proposition \ref{Proposition:Key:NonSymmetric},
it is easy to see that $M,M'$ are in the same $\GL_{n+1}(k)$-orbit
if and only if $(\mathcal{F},\psi), (\mathcal{F}',\psi')$ are equivalent.
Details are left to the reader.
\end{proof}

\begin{thm}
\label{Theorem:MainTheoremSL(n+1):NonSymmetric}
There is a bijection between the following sets.
\begin{itemize}
\item The set of $\SL_{n+1}(k)$-orbits in $(k^2 \otimes k^{n+1} \otimes k^{n+1})_S$.
\item The set of equivalence classes of triples $(\mathcal{F},\psi,v)$,
where
\begin{itemize}
\item $\mathcal{F}$ is a coherent $\O_S$-module with
$\length_{\O_{S,x}} \O_{S,x} = \length_{\O_{S,x}} {\mathcal{F}}_{x}$
for each $x \in S$,
\item $\psi \colon H^0(S,\mathcal{F}) \overset{\sim}{\longrightarrow} H^0(S,\mathcal{F}^{\vee})$
is an isomorphism of $k$-vector spaces, and
\item $v \in \bigwedge^{n+1} H^0(S,\mathcal{F})$ is a $k$-basis.
\end{itemize}
Here two triples $(\mathcal{F},\psi,v),(\mathcal{F}',\psi',v')$
are equivalent if there is an isomorphism
$\rho \colon \mathcal{F} \overset{\sim}{\longrightarrow} \mathcal{F}'$
of $\O_S$-modules satisfying $\psi = H^0(\transp{\rho}) \circ \psi' \circ H^0(\rho)$
and $\big( \bigwedge^{n+1} H^0(\rho) \big)(v) = v'$.
\end{itemize}
\end{thm}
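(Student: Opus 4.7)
My plan is to deduce the theorem from Proposition \ref{Proposition:Key:NonSymmetric} in exactly the same manner that Theorem \ref{Theorem:MainTheoremSL(n+1)} was deduced from Proposition \ref{Proposition:Key}. The role of the symmetric isomorphism $\lambda$ is played by the isomorphism $\psi$ of global sections, and the wedge datum $v$ is used to cut down from $\GL_{n+1}(k)$-orbits to $\SL_{n+1}(k)$-orbits.

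First I would construct the map from pairs to triples as follows. For $M \in (k^2 \otimes k^{n+1} \otimes k^{n+1})_S$, let $(\mathcal{F}, s, t)$ with $s = \{s_0,\ldots,s_n\}$ and $t = \{t_0,\ldots,t_n\}$ be a triple corresponding to $M$ by Proposition \ref{Proposition:Key:NonSymmetric}. Define $\psi \colon H^0(S,\mathcal{F}) \overset{\sim}{\longrightarrow} H^0(S,\mathcal{F}^{\vee})$ by $\psi(s_i) := t_i$ for each $i$, and set $v := s_0 \wedge s_1 \wedge \cdots \wedge s_n \in \bigwedge^{n+1} H^0(S,\mathcal{F})$. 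Doing the same for $M' \in (k^2 \otimes k^{n+1} \otimes k^{n+1})_S$ with triple $(\mathcal{F}', s', t')$ yields $(\mathcal{F}',\psi',v')$. I would then show $M,M'$ lie in the same $\SL_{n+1}(k)$-orbit iff $(\mathcal{F},\psi,v) \sim (\mathcal{F}',\psi',v')$.

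For the forward direction, suppose $M' = M \cdot P$ with $P \in \SL_{n+1}(k)$. By Proposition \ref{Proposition:Key:NonSymmetric}(2) the triples $(\mathcal{F}, s\,\transp{P}^{-1}, t\,\transp{P}^{-1})$ and $(\mathcal{F}', s', t')$ are equivalent, so there is $\rho \colon \mathcal{F} \overset{\sim}{\longrightarrow} \mathcal{F}'$ with $\rho(s\,\transp{P}^{-1}) = s'$ and $(\transp{\rho})(t') = t\,\transp{P}^{-1}$. A short index computation shows that applying $H^0(\transp{\rho}) \circ \psi' \circ H^0(\rho)$ to $(s\,\transp{P}^{-1})_i$ produces $(\transp{\rho})(t'_i) = (t\,\transp{P}^{-1})_i = \psi((s\,\transp{P}^{-1})_i)$, so $\psi = H^0(\transp{\rho}) \circ \psi' \circ H^0(\rho)$. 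For the wedge, since $H^0(\rho)$ sends the ordered basis $s\,\transp{P}^{-1}$ to $s'$, we find $\bigl(\bigwedge^{n+1} H^0(\rho)\bigr)(v) = (\det P)\, v' = v'$ using $\det P = 1$. Hence the triples are equivalent.

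For the converse, given an equivalence $\rho \colon \mathcal{F} \overset{\sim}{\longrightarrow} \mathcal{F}'$ realizing $(\mathcal{F},\psi,v) \sim (\mathcal{F}',\psi',v')$, the images $\rho(s)$ and $s'$ are two ordered $k$-bases of $H^0(S,\mathcal{F}')$, so there is a unique $P \in \GL_{n+1}(k)$ with $\rho(s\,\transp{P}^{-1}) = s'$. Using $\psi = H^0(\transp{\rho}) \circ \psi' \circ H^0(\rho)$, the same index manipulation as above yields $(\transp{\rho})(t') = t\,\transp{P}^{-1}$, so the triples $(\mathcal{F}, s\,\transp{P}^{-1}, t\,\transp{P}^{-1})$ and $(\mathcal{F}', s', t')$ are equivalent in the sense of Proposition \ref{Proposition:Key:NonSymmetric}. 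By part (2) of that proposition, $M' = M \cdot P$. Finally, the condition $\bigl(\bigwedge^{n+1} H^0(\rho)\bigr)(v) = v'$ combined with the wedge computation $\bigl(\bigwedge^{n+1} H^0(\rho)\bigr)(v) = (\det P)\, v'$ forces $\det P = 1$, so $P \in \SL_{n+1}(k)$.

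The only non-trivial step is the bookkeeping of the wedge factor, which requires verifying that the change-of-basis matrix sending $\{s'_i\}$ to $\{H^0(\rho)(s_j)\}$ has determinant $\det P$; this is a routine linear algebra check of the convention $(\transp{P}^{-1})_{j,i} = (P^{-1})_{i,j}$ fixed in Proposition \ref{Proposition:Key}(2). Once that is in place, the rest is a straightforward mimicry of the proof of Theorem \ref{Theorem:MainTheoremSL(n+1)} with $\lambda$ replaced by $\psi$.
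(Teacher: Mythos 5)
Your proof is correct and fills in precisely the details the paper leaves to the reader: it mimics the proof of Theorem \ref{Theorem:MainTheoremSL(n+1)} by replacing the rigidified bijection of Proposition \ref{Proposition:Key} with that of Proposition \ref{Proposition:Key:NonSymmetric}, encoding the second basis $t$ into the isomorphism $\psi$ and using the wedge datum $v$ to detect $\det P = 1$. This matches the approach the paper indicates.
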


\begin{proof}
We omit the proof because it is similar to
the proof of Theorem \ref{Theorem:MainTheoremSL(n+1)}.
Details are left to the reader.
\end{proof}

\begin{rem}
There is a natural map from
the set of $\GL_{n+1}(k)$-orbits in $(k^2 \otimes \Sym_2 k^{n+1})_S$ to
the set of $\GL_{n+1}(k)$-orbits in $(k^2 \otimes k^{n+1} \otimes k^{n+1})_S$.
Via Theorem \ref{Theorem:MainTheorem} and
Theorem \ref{Theorem:MainTheorem:NonSymmetric},
it corresponds to the map
$(\mathcal{F},\lambda) \mapsto (\mathcal{F},\psi)$,
where $\psi = H^0(\lambda)$ is
the isomorphism of $k$-vector spaces induced by $\lambda$.
Similarly, a natural map from
the set of $\SL_{n+1}(k)$-orbits in $(k^2 \otimes \Sym_2 k^{n+1})_S$ to
the set of $\SL_{n+1}(k)$-orbits in $(k^2 \otimes k^{n+1} \otimes k^{n+1})_S$
corresponds to the map
$(\mathcal{F},\lambda,v) \mapsto (\mathcal{F},\psi,v)$ with $\psi = H^0(\lambda)$
by Theorem \ref{Theorem:MainTheoremSL(n+1)} and
Theorem \ref{Theorem:MainTheoremSL(n+1):NonSymmetric}.
\end{rem}

\appendix

\section{Calculation of the determinants of certain symmetric matrices}
\label{Appendix:SymmetricDeterminant}

In this appendix, we calculate the determinants of certain symmetric matrices,
which are symmetric analogues of companion matrices.
The results in this appendix are presumably well-known.
We include the proofs here for the reader's convenience
because we cannot find them in the literature.

Let $k$ be a field, and $\overline{k}$ an algebraic closure of $k$.
We fix a positive integer $r \geq 1$.

\begin{lem}
\label{Appendix:Lemma:LinearAlgebra1}
Let $f(X) \in k[X]$ be a  polynomial  of degree $r$
such that the coefficient of $X^r$ in $f(X)$ is equal to $(-1)^{(r-1)r/2}$.
Then there are symmetric matrices $M_0,M_1$ of size $r$
with entries in $k$ satisfying the following conditions:
\begin{itemize}
\item $\det(X M_0 + M_1) = f(X)$, and
\item $\rank(a M_0 + M_1) \geq r-1$ for any $a \in \overline{k}$.
\end{itemize}
\end{lem}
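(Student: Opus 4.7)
The plan is to realize the pair $(M_0, M_1)$ via a Hankel-companion factorization. Write $f(X) = (-1)^{(r-1)r/2}\bigl(X^r + c_{r-1}X^{r-1} + \cdots + c_0\bigr)$ with $c_0,\ldots,c_{r-1} \in k$, and let $g(X) := X^r + c_{r-1}X^{r-1} + \cdots + c_0$. Let $C$ denote the companion matrix of $g$, so that the matrix of multiplication by $X$ on $L := k[X]/(g(X))$ in the ordered basis $\{1,X,\ldots,X^{r-1}\}$ equals $C$ and $\det(XI - C) = g(X)$.

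Next, I would introduce the $k$-linear functional $\phi \colon L \to k$ defined by $\phi(X^i) = \delta_{i,r-1}$ for $0 \leq i \leq r-1$, and consider the symmetric $k$-bilinear form $B(p,q) := \phi(pq)$ on $L$. Let $H$ be the Gram matrix of $B$ in the basis above; then $H$ is a symmetric Hankel matrix with $H_{ij} = \phi(X^{i+j})$. The defining property of $\phi$ forces $H_{ij} = 0$ whenever $i+j < r-1$ and $H_{ij} = 1$ whenever $i+j = r-1$, so $H$ is anti-upper-triangular with $1$'s on the anti-diagonal, giving $\det(H) = (-1)^{(r-1)r/2}$.

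Now set $M_0 := H$ and $M_1 := -HC$. Both are symmetric: $M_0$ by construction, and $M_1$ because the identity $B(Xp, q) = \phi(Xpq) = B(p, Xq)$ translates to $HC = C^{\mathsf T} H = (HC)^{\mathsf T}$. Factoring gives $XM_0 + M_1 = H(XI - C)$, hence $\det(XM_0 + M_1) = \det(H)\det(XI-C) = (-1)^{(r-1)r/2} g(X) = f(X)$, which verifies the first condition.

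For the rank condition, since $\det H \neq 0$, the matrix $aM_0 + M_1 = H(aI - C)$ has the same rank as $aI - C$ over $\overline{k}$. The companion matrix $C$ is nonderogatory (its minimal polynomial equals its characteristic polynomial), so each of its eigenspaces is one-dimensional; consequently $\dim_{\overline{k}} \ker(aI - C) \leq 1$ for every $a \in \overline{k}$, giving $\rank(aM_0 + M_1) \geq r-1$. The main technical point to nail down is the symmetry of $HC$, which the bilinear-form interpretation makes essentially automatic; the determinant calculation and the rank bound are then routine consequences.
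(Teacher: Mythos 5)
Your proof is correct and constructs exactly the same matrices as the paper: your $H$ is the paper's $M_0$ (with $H_{ij}=\phi(X^{i+j})=\theta_{r-1}(\alpha^{i+j})$), and your $-HC$ is the paper's $M_1$ since $(HC)_{ij}=B(X^i,X\cdot X^j)=\phi(X^{i+j+1})$. The underlying idea is the same (factor the pencil as $H(XI-C)$ with $H$ an anti-triangular Hankel matrix and $C$ the companion matrix), but your presentation via the symmetric bilinear form $B(p,q)=\phi(pq)$ makes the symmetry of $HC$ transparent in one line, whereas the paper reaches the companion-matrix identification through an explicit dual-space computation with the maps $\psi_p$; you also explicitly verify the rank bound via the nonderogatory property of $C$, a step the paper's proof leaves implicit.
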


\begin{proof}
We write $f(X) = \sum_{i=0}^r c_i X^i$, and put $L := k[X]/(f(X))$.
We denote the image of $X$ in $L$ by $\alpha$.
Then $L = k[\alpha]$ is a $k$-vector space of dimension $r$,
and $\{ 1,\alpha,\ldots,\alpha^{r-1} \}$ is a $k$-basis of $L$.
For each $i$, let
$\theta_i \colon L \longrightarrow k$
be the $k$-linear map defined by
$\theta_i(a_0 + a_1 \alpha + \cdots + a_{r-1} \alpha^{r-1}) = a_i$
for any $a_0,a_1,\ldots,a_{r-1} \in k$.
We define symmetric matrices $M_0, M_1$ by
\[ (M_0)_{i,j} := \theta_{r-1}(\alpha^{i+j}), \qquad
   (M_1)_{i,j} := - \theta_{r-1}(\alpha^{i+j+1}) \]
for $0 \leq i,j \leq r-1$.

We shall show these matrices satisfy the conditions of this lemma.
Since $(M_0)_{i,j} = 0$ for $i+j < r-1$
and $(M_0)_{i,j} = 1$ for $i+j = r-1$,
we have $\det M_0 = (-1)^{(r-1)r/2} \neq 0$.
Hence we have
\[ \det\big( X M_0 + M_1 \big) = (-1)^{(r-1)r/2} \det\big( X I_r + (M_0)^{-1} M_1 \big), \]
where $I_r$ is the identity matrix of size $r$.
It is enough to show that
$-(M_0)^{-1} M_1$ is equal to the companion matrix of
the monic polynomial $(-1)^{(r-1)r/2} f(X)$.
Namely, we shall show
\begin{equation}
\label{Appendix:CompanionMatrix}
-(M_0)^{-1} M_1 =
\begin{pmatrix}
0 & 0 & \cdots & 0 & (-1)^{(r-1)r/2+1} c_0 \\
1 & 0 & \cdots & 0 & (-1)^{(r-1)r/2+1} c_1 \\
0 & 1 & \cdots & 0 & (-1)^{(r-1)r/2+1} c_2 \\
0 & 0 & \ddots & 0 & \vdots \\
0 & 0 & \cdots & 1 & (-1)^{(r-1)r/2+1} c_{r-1}
\end{pmatrix}.
\end{equation}

We consider the $k$-vector space $\Hom_k(L,k)$ of $k$-linear maps from $L$ to $k$.
The set $\{ \theta_0,\theta_1,\ldots,\theta_{r-1} \}$ is a $k$-basis of $\Hom_k(L,k)$.
For $\varphi \in \Hom_k(L,k)$ and $x \in L$,
we define $x \varphi \in \Hom_k(L,k)$ by $(x \varphi)(y) := \varphi(x y)$ for $y \in L$.
For $p=0$ or $1$, let
\[ \psi_p \colon \Hom_k(L,k) \longrightarrow \Hom_k(L,k) \]
be the $k$-linear map defined by
$\psi_p(\theta_i) = \alpha^{i+p} \theta_{r-1}$
for $0 \leq i \leq r-1$.
For $x = \sum_{j=0}^{r-1} a_j \alpha^j \in L$ with $a_j \in k$,
we have
\[ \big( \psi_p(\theta_i) \big)(x) = (\alpha^{i+p} \theta_{r-1})(x)
  = \theta_{r-1} \bigg( \sum_{j=0}^{r-1} a_j \alpha^{i+j+p} \bigg)
  = \sum_{j=0}^{r-1} \theta_{r-1}(\alpha^{i+j+p}) \, \theta_j(x). \]
Hence we have
$\psi_p(\theta_i) = \sum_{j=0}^{r-1} \theta_{r-1}(\alpha^{i+j+p}) \, \theta_j$.
For $p = 0$ (resp.\ $p=1$),
the matrix representing $\psi_p$
with respect to $\{ \theta_0,\theta_1,\ldots,\theta_{r-1} \}$
is equal to $M_0$ (resp.\ $-M_1$).
Since $M_0$ is invertible,
$\{ \theta_{r-1},\alpha \theta_{r-1},\ldots,\alpha^{r-1} \theta_{r-1} \}$
is also a $k$-basis of $\Hom_k(L,k)$.
The matrix representing $\psi_1 \circ \psi_0^{-1}$
with respect to
$\{ \theta_{r-1},\alpha \theta_{r-1},\ldots,\alpha^{r-1} \theta_{r-1} \}$
is equal to $- (M_0)^{-1} M_1$.

On the other hand,
$\psi_1 \circ \psi_0^{-1}$ is equal to the map $\varphi \mapsto \alpha \varphi$
because $(\psi_1 \circ \psi_0^{-1})(\alpha^{i} \theta_{r-1}) = \alpha^{i+1} \theta_{r-1}$ for
$0 \leq i \leq r-1$.
Since
\[ \alpha^{r} \theta_{r-1} = (-1)^{(r-1)r/2+1} (\sum_{i=0}^{r-1} c_i \alpha^i) \, \theta_{r-1}, \]
the matrix representing the map $\varphi \mapsto \alpha \varphi$
with respect to
$\{ \theta_{r-1},\alpha \theta_{r-1},\ldots,\alpha^{r-1} \theta_{r-1} \}$
is equal to the right hand side of (\ref{Appendix:CompanionMatrix}).
\end{proof}

\begin{ex}
The entries of $M_0,M_1$ in the proof
can be calculated as polynomials of the coefficients of $f(X)$.
For example, when $r = 4$, the matrices $M_0,M_1$
for $f(X) = X^4 + a X^3 + b X^2 + c X + d$ are
\[
M_0 = \begin{pmatrix}
0 & 0 & 0 & 1 \\
0 & 0 & 1 & -a \\
0 & 1 & -a & a^2-b \\
1 & -a & a^2-b & -a^3+2ab-c
\end{pmatrix},
\]
\[
M_1 = \begin{pmatrix}
0 & 0 & -1 & a \\
0 & -1 & a & -a^2+b \\
-1 & a & -a^2+b & a^3-2ab+c \\
a & -a^2+b & a^3-2ab+c & -a^4 + 3 a^2 b - 2ac -b^2 + d
\end{pmatrix}.
\]
It is easy to confirm that
$- (M_0)^{-1} M_1$ is the companion matrix of $f(X)$.
Hence we have $\det(X M_0 + M_1) = f(X)$.
In fact, we have
\[
- (M_0)^{-1} M_1 =
\begin{pmatrix}
 0 &  0 &  0 & -d \\
 1 &  0 &  0 & -c \\
 0 &  1 &  0 & -b \\
 0 &  0 &  1 & -a
\end{pmatrix}.
\]
\end{ex}

The following result is a variant of Lemma \ref{Appendix:Lemma:LinearAlgebra1}
for homogeneous polynomials.

\begin{lem}
\label{Appendix:Lemma:LinearAlgebra2}
Let $f(X_0,X_1) = \sum_{i=0}^{r} c_i X_0^i X_1^{r-i} \in k[X_0,X_1]$
be a homogeneous polynomial of degree $r$.
Assume that $f(X_0,X_1)$ is written as
$f(X_0,X_1) = X_1^s \cdot g(X_0,X_1)$,
where the coefficient of $X_0^{r-s}$ in $g(X_0,X_1)$ is
equal to $(-1)^{(r-1)r/2}$.
Then there are symmetric matrices $M_0, M_1$ of size $r$
with entries in $k$ satisfying the following conditions:
\begin{itemize}
\item $\det(X_0 M_0 + X_1 M_1) = f(X_0,X_1)$, and
\item $\rank(a M_0 + b M_1) \geq r-1$ for any $a,b \in \overline{k}$ with $(a,b) \neq (0,0)$.
\end{itemize}
\end{lem}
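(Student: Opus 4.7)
The plan is to reduce to Lemma \ref{Appendix:Lemma:LinearAlgebra1} by a block diagonal construction. When $s = 0$, Lemma \ref{Appendix:Lemma:LinearAlgebra2} follows directly from Lemma \ref{Appendix:Lemma:LinearAlgebra1} applied to $h(X) := f(X, 1)$, whose degree $r$ and leading coefficient $(-1)^{(r-1)r/2}$ match the hypothesis. Homogenizing the resulting identity $\det(X M_0 + M_1) = h(X)$ gives $\det(X_0 M_0 + X_1 M_1) = X_1^r h(X_0/X_1) = f(X_0, X_1)$. The rank condition for $(a,b) \in \overline{k}^2 \setminus \{0\}$ with $b \neq 0$ follows from Lemma \ref{Appendix:Lemma:LinearAlgebra1} after dividing by $b$, while for $b = 0$ it holds because $\det M_0$ equals the nonzero leading coefficient of $h$.

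For $1 \leq s \leq r$, I would take $M_i = A_i \oplus B_i$ block diagonal of sizes $(r-s) + s$, with the $B$-block producing the factor $X_1^s$ and the $A$-block producing the factor $g(X_0,X_1)$. For the $B$-block, set $(B_0, B_1) := (\Lambda_s,\, \pm \Delta_s)$ using the matrices from Section \ref{Section:SegreClassification}: a direct calculation gives $\det(X_0 \Lambda_s + X_1 \Delta_s) = (-1)^{s(s-1)/2} X_1^s$ and $\det(X_0 \Lambda_s - X_1 \Delta_s) = (-1)^{s(s+1)/2} X_1^s$, and $\Lambda_s$ has rank $s - 1$. For the $A$-block (when $s < r$), apply Lemma \ref{Appendix:Lemma:LinearAlgebra1} to $h(X) := \epsilon\, g(X, 1)$, choosing the sign $\epsilon \in \{\pm 1\}$ so that $h$ has leading coefficient $(-1)^{(r-s-1)(r-s)/2}$ (the hypothesis of Lemma \ref{Appendix:Lemma:LinearAlgebra1} with $r$ replaced by $r - s$). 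Homogenizing yields $\det(X_0 A_0 + X_1 A_1) = \epsilon\, g(X_0, X_1)$, and $A_0$ is invertible since $\det A_0$ equals the nonzero leading coefficient of $h$. Picking the sign of $\Delta_s$ in the $B$-block so that $\det(X_0 B_0 + X_1 B_1) = \epsilon\, X_1^s$ then yields $\det(X_0 M_0 + X_1 M_1) = f(X_0, X_1)$. The degenerate case $s = r$ (where $g$ is the constant $(-1)^{(r-1)r/2}$ and the $A$-block is empty) is handled directly by $(M_0, M_1) := (\Lambda_r, \Delta_r)$.

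The rank condition then follows from block additivity: for $(a,b) \in \overline{k}^2 \setminus \{0\}$ with $b \neq 0$, the $B$-block has full rank $s$ (its determinant $\epsilon X_1^s$ is nonzero at $(a,b)$) and the $A$-block has rank $\geq r - s - 1$ by Lemma \ref{Appendix:Lemma:LinearAlgebra1}, giving total $\geq r - 1$; for $b = 0$ and $a \neq 0$, the $A$-block has rank $r - s$ and the $B$-block equals $a\Lambda_s$ of rank $s - 1$, for total exactly $r - 1$. The main obstacle is verifying that the required sign $\epsilon$ is always realizable among the two candidates $(-1)^{s(s-1)/2}$ and $(-1)^{s(s+1)/2}$: when $s$ is odd these two signs are opposite, so either target is attainable, and when $s$ is even they coincide — but in that case a short parity computation shows this common value already equals $\epsilon$ (the difference of exponents reduces modulo $2$ to $s(1-r)$, which is even whenever $s$ is even). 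This sign bookkeeping is the only nontrivial ingredient; everything else is structural and routine.
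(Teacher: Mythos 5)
Your proof is correct and follows essentially the same strategy as the paper: apply Lemma \ref{Appendix:Lemma:LinearAlgebra1} to a sign-corrected version of $g(X,1)$, take a block direct sum with the $(\Lambda_s,\pm\Delta_s)$ block contributing the $X_1^s$ factor, and verify the determinant and rank conditions blockwise. The only cosmetic difference is that the paper fixes the explicit choice $N_1=(-1)^{r-s}\Delta_s$ and then verifies the accumulated sign is $+1$, whereas you parametrize the sign of the $B$-block and argue the required value is always realizable; both hinge on the same parity identity $\binom{r-s}{2}+\binom{r}{2}-\binom{s}{2}\equiv(r-s)s\pmod 2$.
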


\begin{proof}
Applying Lemma \ref{Appendix:Lemma:LinearAlgebra1} to $g(X,1)$,
we obtain symmetric matrices $M_0, M_1$ of size $r-s$
satisfying
\[ \det(X M_0 + M_1) = (-1)^{(r-s-1)(r-s)/2 + (r-1)r/2} g(X,1) \]
and
$\rank(a M_0 + M_1) \geq r-s-1$ for any $a \in \overline{k}$.
Consider the following matrices $N_0, N_1$ of size $s$
\[
N_0 :=
\begin{pmatrix}
 & & & & 0 \\
 & & & 0 & 1 \\
 & & \mathrm{\reflectbox{$\ddots$}} & \mathrm{\reflectbox{$\ddots$}} \\
 & 0 & 1 & \\
0 & 1 & &
\end{pmatrix},
\qquad
N_1 := (-1)^{r-s}
\begin{pmatrix}
 & & & & 1 \\
 & & & 1 \\
 & & \mathrm{\reflectbox{$\ddots$}} \\
 & 1 & & \\
1 & &  &
\end{pmatrix}.
\]
We take the matrix direct sums
$\widetilde{M}_0 := N_0 \oplus M_0$ and $\widetilde{M}_1 := N_1 \oplus M_1$.
Then $\widetilde{M}_0, \widetilde{M}_1$ satisfy the conditions of this lemma.
In fact, the first condition is satisfied because
\begin{align*}
\det\big( X_0 \widetilde{M}_0 + X_1 \widetilde{M}_1 \big)
  &= \det(X_0 N_0 + X_1 N_1) \cdot \det(X_0 M_0 + X_1 M_1) \\
  &= (-1)^{s(r-s) + (s-1)s/2} X_1^{s} \cdot (-1)^{(r-s-1)(r-s)/2 + (r-1)r/2} g(X_0,X_1) \\
  &= f(X_0,X_1).
\end{align*}
When $b \neq 0$, the second condition is satisfied because $\rank(a N_0 + b N_1) = s$
and $\rank(a M_0 + b M_1) = \rank(a b^{-1} M_0 + M_1) \geq r-s-1$.
When $b = 0$, the second condition is satisfied also in this case
because $\rank (a N_0) = \max \{ s-1,\,0 \}$ and $\rank (a M_0) = r-s$
for any $a \neq 0 \in \overline{k}$.
(The matrix $M_0$ is invertible because
the coefficient of $X^{r-s}$ in $\det(X M_0 + M_1)$ is non-zero.)
\end{proof}

\begin{rem}
It is an interesting problem in Arithmetic Invariant Theory to determine when,
for a given $f(X_0,X_1)$ of degree $r$,
there exist symmetric matrices $M_0, M_1$ of size $r$
with entries in $k$ satisfying
$\det(X_0 M_0 + X_1 M_1) = f(X_0,X_1)$.
Such pairs $M_0, M_1$ do not exist in general
when $k$ is not algebraically closed.
This problem is studied in detail in Section \ref{Section:WoodBhargavaGrossWang}.
\end{rem}

\end{document}